\theoremstyle{plain}
\newtheorem{theorem}{Theorem}[section]
\newtheorem{proposition}{Proposition}[section]
\newtheorem{lemma}[theorem]{Lemma}
\theoremstyle{remark}
\newtheorem{assumption}[theorem]{Assumption}
\newtheorem{remark}[theorem]{Remark}
\newcommand{\Var}{\textnormal{Var}} %
\newcommand{\trv}{\textnormal{TRV}}
\def\bR{\mathbb{R}}
\def\bP{\mathbb{P}}
\def\bE{\mathbb{E}}
\def\sF{\mathscr{F}}
\newcommand{\wh}{\widehat}
\newcommand{\wt}{\widetilde}
\newcommand{\toprob}{\overset{P}{\longrightarrow}}
\newcommand{\toDist}{\overset{\mathcal{D}}{\longrightarrow}}
\newcommand{\toDistSt}{\overset{st}{\longrightarrow}}
         \DeclareMathAlphabet{\mathscr}{U}{BOONDOX-cal}{m}{n}
         \SetMathAlphabet{\mathscr}{bold}{U}{BOONDOX-cal}{b}{n}
         \DeclareMathAlphabet{\mathbscr} {U}{BOONDOX-cal}{b}{n}
\begin{document}

\begin{frontmatter}
\title{Data-driven fixed-point tuning for truncated realized variations}
\runtitle{Fixed-point tuning for truncated realized variations}

\begin{aug}
\author[A]{\inits{B.~C.}\fnms{B. Cooper}~\snm{Boniece}\ead[label=e1]{cooper.boniece@drexel.edu}}
\author[B]{\inits{J.~E.}\fnms{Jos\'e E.}~\snm{Figueroa-L\'opez}\ead[label=e2]{figueroa-lopez@wustl.edu}}
\author[B]{\inits{Y.}\fnms{Yuchen}~\snm{Han}\ead[label=e3]{y.han@wustl.edu}}
\address[A]{Department of Mathematics, Drexel University\printead[presep={,\ }]{e1}}

\address[B]{Department of Statistics and Data Science, Washington University in St.~Louis\printead[presep={,\ }]{e2,e3}}
\end{aug}

\begin{abstract}
Many methods for estimating integrated volatility and related functionals of semimartingales in the presence of jumps require specification of tuning parameters for their use in practice.  In much of the available theory, tuning parameters  are assumed to be deterministic and their values are specified only up to asymptotic constraints.  However, in empirical work and in simulation studies, they are typically chosen to be random and data-dependent, with explicit choices often relying entirely on heuristics.    In this paper, we consider novel data-driven tuning procedures for the truncated realized variations of a semimartingale with jumps based on a type of random  fixed-point iteration.  Being effectively automated, our approach alleviates the need for delicate decision-making regarding tuning parameters in practice and can be implemented using information regarding sampling frequency alone.  We demonstrate our methods can lead to asymptotically efficient estimation of integrated volatility and exhibit superior finite-sample performance compared to popular alternatives in the literature.
\section{Introduction}

\end{abstract}

\begin{keyword}
\kwd{High-frequency data}
\kwd{integrated volatility estimation}
\kwd{semimartingales}

\end{keyword}

\end{frontmatter}

The continuous part of the quadratic variation of an It\^o semimartingale,  commonly known as the integrated volatility,  plays an outsize role in financial econometrics, and its estimation in various settings based on discrete observations has been a major focus in the literature at various points in the past 20+ years.   The semimartingale $X$ commonly represents the log-price of a financial asset,  and its integrated volatility serves as a measure of the overall uncertainty inherent in the continuous part of $X$ over a given time period.

Among the variety of available methods for integrated volatility estimation, the truncated realized variation (TRV),  introduced in \cite{mancini:2001}, was one of the first and remains among the most popular approaches to-date that is \textit{jump-robust}, in the sense that it can still provide reliable estimates of integrated volatility when jumps occur in the process $X$.    Other well known jump-robust methods for estimating integrated volatility include bipower variations and their extensions  \citep{barndorff-nielsen:2004,barndorff-nielsen:shephard:winkel:2006,corsi:pirino:reno:2010} or those based on empirical characteristic functions  \citep{todorov:tauchen:2012,jacod:todorov:2014,jacod:todorov:2018}, among others, giving the practitioner a wide array of choices at their disposal for estimation of integrated volatility in modeling contexts where jumps may be present.%

To choose an estimator among this array of options,  currently one must first decide between two distinct classes: either asymptotically efficient approaches, like TRV, which require selection of tuning parameters, or alternatively ``tuning-free'' estimators but at the unfortunate expense of asymptotic efficiency. 
 From the perspective of minimizing variance,  asymptotically efficient approaches are preferable, but their use in practice necessitates the critical additional step of specifying the tuning parameter values themselves.  This consequential step can significantly impact estimation performance, but current asymptotic theory does not offer direct guidelines for choosing parameters explicitly, which can be an extremely delicate matter in practice.  For instance, even in idealized asymptotic settings, appropriate choices often depend on a priori unknown properties of $X$ and can determine whether or not a given estimator retains even the basic requirement of consistency. In the absence of theoretically supported approaches for specifying explicit values of these parameters, the practical use of tuning-parameter-based methods remains entirely reliant on heuristics.  The purpose of the present work is to address this gap.

In the case of TRV,  the tuning parameter of importance is called the \textit{threshold},  denoted hereafter as $\varepsilon>0$, indicating a level above which increments are discarded from the estimation procedure.  Concretely, given a discretely observed semimartingale $X=\{X_t\}_{t\geq 0}$ at times $0=t_0<t_1<\ldots<t_n=T$, the TRV is defined as
\begin{align*}%
\trv_n(\varepsilon)=\sum_{i=1}^{n}\big(\Delta_{i}^{n}X\big)^{2}{\bf 1}_{\{|\Delta_{i}^{n}X|\leq\varepsilon\}},
\end{align*}
where  $\Delta_{i}^{n}X:=X_{t_{i}}-X_{t_{i-1}}$ is the $i^{th}$ increment of $X$, often assumed to be observed on a regular sampling grid, so that $t_i-t_{i-1}=:h_n$ for all $i$.  Statistical properties of TRV  have been extensively studied when $\varepsilon=\varepsilon(h_n)$ is a \textit{deterministic} function of the time step $h_n$ such that $\varepsilon(h_n)\to 0$ at specified rates as $h_n\to 0$.   In \cite{mancini:2009}, when either the jump component of the process $X$ is of finite activity or is a pure-jump L\'evy process with infinite jump activity, TRV was shown to be consistent whenever 
\begin{align}\label{eq:mancini_threshold_cond}
\lim_{h_n\to 0} \varepsilon(h_n) = 0, \quad \mbox{and} \quad  \lim_{h_n\to 0} \frac{\varepsilon(h_n)}{\sqrt{h_n\log \frac{1}{h_n}}} = \infty.
\end{align}
A consistency statement for TRV encompassing a broader class of semimartingales was given in \cite{jacod:2008}, but for the more restrictive case of power thresholds, namely, thresholds of the form
\begin{align}\label{e:powerthreshold}
\varepsilon(h_n)=ch_n^\beta, \qquad c>0,\quad 0<\beta<1/2.
\end{align}
Under finite jump activity, central limit theorems for TRV were established under the threshold constraint \eqref{eq:mancini_threshold_cond} in \cite{mancini:2009}; in the infinite-activity case, they were established in \cite{jacod:2008} for more general semimartingales based on thresholds satisfying \eqref{e:powerthreshold} under the additional assumption the volatility itself is a semimartingale, and also in \cite{cont:mancini:2011,mancini:2011} for general c\`adl\`ag volatility processes but for L\'evy-type jump behavior, both under additional constraints on $\varepsilon$ related to the Blumenthal-Getoor index of $X$.

While asymptotic constraints such as \eqref{eq:mancini_threshold_cond} and \eqref{e:powerthreshold}  may be informative for threshold selection,  they do not concretely indicate how one should make an explicit choice for $\varepsilon$ in a given context. Moreover, even if a particular deterministic choice for $\varepsilon$  may lead to good estimation performance under a given model, the same choice of $\varepsilon$ under a perturbed version of the same model can lead to dramatically worse estimation performance.  To illustrate this point, the left panel of Figure \ref{fig1}, below, shows histograms of the relative estimation errors for TRV using a fixed, deterministically chosen threshold value under two different parameter settings of the same model. While  TRV performs satisfactorily with this deterministic threshold value under one of the parameter settings, it performs poorly with the same threshold value under alternate parameter settings, even though the expected quadratic variation of $X$ is the same in both cases. In contrast, the right panel of Figure \ref{fig1} displays histograms of relative estimation errors for the approach developed in this paper, where satisfactory performance is maintained across both settings. %

Though Monte Carlo studies or empirical insights may help in choosing the value of $\varepsilon$ deterministically in a given setting, an arguably more natural approach is to select thresholds through some data-driven procedure, permitting the threshold itself to depend on observed data.  Indeed, random, data-driven parameter tuning is often done in numerical studies in the literature -- without theoretical support -- to illustrate finite-sample behavior of estimators and  to improve their numerical performance.%

\begin{figure}[h]
\begin{center}
\begin{minipage}{0.38\linewidth}
\begin{center}
\includegraphics[width=\linewidth]{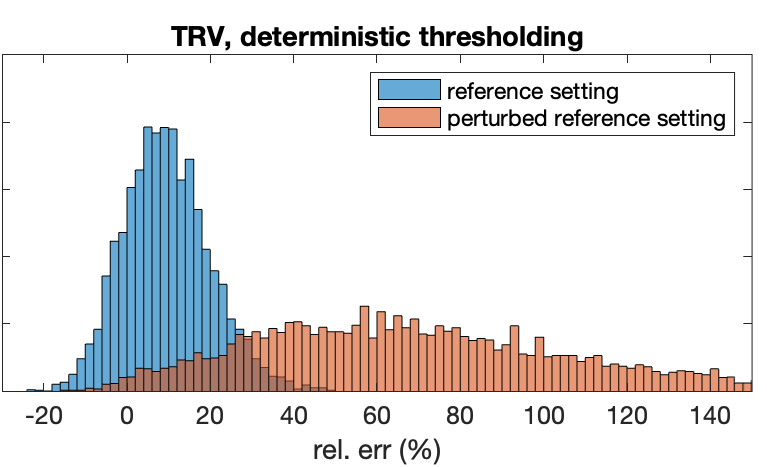}
\end{center}
\end{minipage}
\qquad
\begin{minipage}{0.38\linewidth}
\begin{center}
\includegraphics[width=\linewidth]{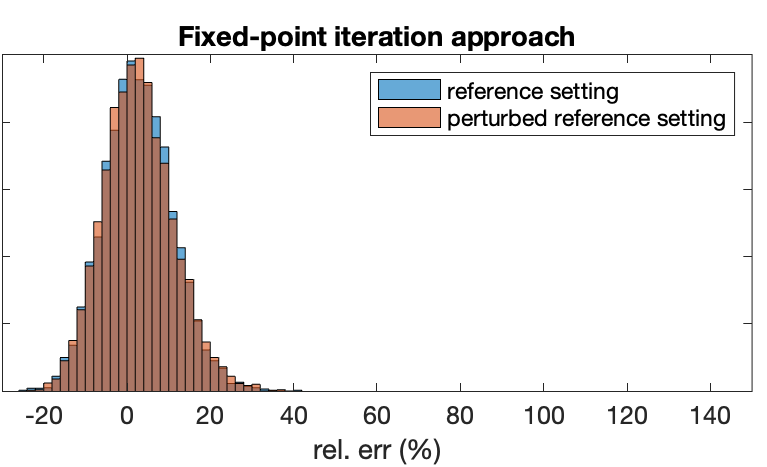}
\end{center}
\end{minipage}
\end{center}
\caption[Fig 1]{Sampling distributions of the relative estimation error for TRV with deterministic thresholding at a fixed threshold value $\varepsilon$ (left panel) versus our automated thresholding procedure\footnotemark~(right panel).   The average realized quadratic variation is the same in both models.  The reference setting is the same as Model 1 as described in Section \ref{s:montecarlo}, based on a 1-week time horizon at a sampling frequency of 5 minutes. The perturbed reference setting uses the same parameter settings as the reference setting, but with half overall (average) volatility level and roughly twice the rate of its finite jump activity component (adjusted to match the average quadratic variation of both models).\label{fig1}}
\end{figure}
\footnotetext{Specifically our approach as described in (6b) in Section \ref{s:montecarlo}, though similar behavior holds in all other cases.}
However,  by their very nature, data-driven parameter selection procedures introduce considerable statistical dependencies and associated theoretical challenges that are otherwise absent when parameters  are chosen deterministically. Consequently, despite the practicality and potential benefits of data-driven parameter selection, the literature on TRV and related methods employing data-driven tuning procedures has remained relatively scarce. For instance, in the case of finite activity jumps, it was stated without proof in a remark in \cite{mancini:reno:2011} that consistency holds for time-dependent random thresholds  (possibly different for each increment $\Delta_i^n X$) of the form $c_{t_i}\varepsilon$, where $\varepsilon=\varepsilon(h_n)$ satisfies \eqref{eq:mancini_threshold_cond} and $\{c_t\}_{t\geq 0}$ is a stochastic process that is a.s.~bounded above and bounded away from $0$. Later, in \cite{figueroa-lopez:mancini:2019}, consistency was rigorously established under finite jump activity for possibly data-dependent  time-varying thresholds of the type  $\sqrt{2(1+\eta)M_i h_n \log (1/h_n)}$, where $\eta>0$ and $M_i$ are random variables satisfying  $M_i\in[ \inf_{s\in [t_{i-1}, t_i]} \sigma_s^2,\, \sup_{s\in [0,T]} \sigma_s^2 ]$ a.s. To the authors' knowledge, these statements comprise the totality of asymptotic theory for TRV with data-driven thresholds, and there is currently no theoretical support in the literature for data-driven parameter tuning of TRV outside consistency statements in the finite activity setting.

Moreover, in spite of the considerable focus on asymptotic properties of TRV with the threshold constraints \eqref{eq:mancini_threshold_cond} and \eqref{e:powerthreshold}, recent work \citep{figueroa-lopez:mancini:2019,figueroa-lopez:nisen:2013} has demonstrated that certain \textit{optimal} choices of threshold do not satisfy these asymptotic conditions, leaving a substantive gap in the available asymptotic theory even within the scope of deterministic thresholding.  Optimal-type thresholds can lead to substantial gains in finite sample estimation performance, %
and their explicit expressions can serve as a more direct guideline for threshold selection, making them ideal choices for practitioners.  However, their direct use, even to first-order approximation, is complicated by the fact that they depend on the volatility itself. For instance, under an idealized constant volatility assumption and general finite jump activity,  the MSE-optimal threshold $\varepsilon_n^\star$ admits the approximation: %
\begin{equation}\label{e:estar}
\varepsilon^{\star}_{n} \sim \sqrt{2\sigma^2 h_n\log \tfrac{1}{h_n}}, \quad \mbox{as } h_n\to 0,
\end{equation}
where $\sigma>0$ is the volatility. Under L\'evy stable-like infinite jump activity, the MSE-optimal threshold is the same as $\varepsilon^{\star}_n$ up to an additional multiplicative constant depending on the Blumenthal-Getoor index  \citep{figueroa-lopez:gong:han:2022}. 
Though this expression cannot be used directly in practice due to its dependence on knowledge of the volatility, it lends itself naturally to fixed-point iterative procedures, as suggested in \cite{figueroa-lopez:nisen:2013} and  \cite{figueroa-lopez:mancini:2019}, whose asymptotic theory has remained unestablished, until now.%

 In this work, we consider two classes of iterative procedures for jump-robust estimation of the integrated volatility based on data-driven parameter tuning.  Our procedures are designed to turn the otherwise infeasible threshold \eqref{e:estar} into a feasible one, and will be seen to stem from solutions $\xi$ to random fixed-point equations of the type
\begin{equation}\label{e:fixedpoint}
\xi =\Phi_n\left( (\Delta_1^n X){\bf 1}_{\{|\Delta_1^n X|\leq  \sqrt{r_n  \xi }\}},\ldots ,(\Delta_n^n X) {\bf 1}_{\{|\Delta_n^n X|\leq  \sqrt{r_n\xi}\}}\right),
\end{equation}
for an appropriate function $\Phi_n$ and sequence $r_n\to0$.  Viewed as random, data-dependent thresholds, our procedures extend the asymptotic theory beyond deterministic thresholding to accommodate automatic, data-driven calibration of TRV, and further extends current  asymptotic theory beyond the general rate constraints imposed in \eqref{eq:mancini_threshold_cond} and \eqref{e:powerthreshold}, allowing for time-dependent thresholding, ultimately leading to substantial gains in finite-sample performance and more principled threshold selection procedures.  Part of our analysis is based on relating our proposed iterative estimators to oracle-like sequences of estimators;  this general approach may be of use for parameter tuning  in other jump-robust methods in the  literature.

This paper is organized as follows. Section \ref{trvSec:Model} introduces the model, estimation framework and some notation used throughout the paper. Section \ref{Sec:DebiasMthd} contains our main results, including instances of uniform and time-varying thresholding, and Section \ref{s:montecarlo} contains some numerical illustrations concerning finite-sample estimation performance. The proofs of the main results and auxiliary lemmas are given in Appendices \ref{Sec:proof} and \ref{Sec:proofs_of_lemmas}.

\section{Framework and setting}\label{trvSec:Model}

 We consider a 1-dimensional It\^o semimartingale  $X=(X_{t})_{t\geq 0}$ defined on a complete filtered probability space  $(\Omega,\sF,(\sF_{t})_{t\geq 0},\bP)$  of the form
\begin{align}\label{model:con_fija}
    dX_t= b_tdt + \sigma_tdW_t + \gamma_tdL_t + dJ_t,\quad t\geq 0.
\end{align}
Above,  $W$ is a standard Brownian motion, $b,\gamma,\sigma$ are c\'adl\'ag   adapted,  $L=\{L_t\}_{t\geq 0}$ is a pure-jump infinite-activity L\'evy process,  and  $J=\{J_t\}_{t\geq 0}$ 
is a general pure-jump process with finite jump activity. We refer to Assumption \ref{assump:Coef0} for complete conditions on all driving processes and coefficients.

We suppose that on a fixed and finite time interval $[0,T]$, $n$ observations, $X_{t_1}, X_{t_2}, \ldots, X_{t_n}$, of the continuous-time process $X$ are available at known times $0=t_0<t_1<\ldots<t_n=T$.  We assume sampling times are evenly spaced, and denote the time step between observations as $h_n:=T/n$.   Our estimation target is the integrated volatility (or integrated variance) of $X$ defined as
$$
C_T :=\int_0^T\sigma_s^2ds.
$$

We consider two classes of estimators of $C_T$. %
The first class of estimators we consider are based on an iterative scheme that proceeds as follows:
\begin{enumerate}
\item At the start, an initial guess $\widehat{C}_{n,0}$ for the integrated volatility $C_T$ is first put forward. This initial estimate should ideally be free of tuning parameters; for instance,
 the realized variance (RV) $\widehat{C}_{n,0}= \sum_{i=1}^n (\Delta_i^n X)^2$ or bipower variation $\widehat{C}_{n,0}=\frac{\pi}{2}\sum_{i=1}^{n-1} |\Delta_i^n X||\Delta_{i+1}^n X|$, among other possibilities. We refer to Theorem \ref{thm:consistency_clt_con} and Remark \ref{CmntInitCa} for further information about our required conditions on $\widehat{C}_{n,0}$. This initializes a data-dependent threshold of the form  $B_{n,0} = \big(r_n T^{-1}\widehat C_{n,0}\big)^{1/2}$, where the \textit{threshold rate} $r_n=r_n(h_n)$ is some  predetermined deterministic function of $h_n$. 
 \item Then, an iterative  sequence of thresholds $B_{n,j}$ and estimators $\widehat{C}_{n,j}$ is constructed based on the relations
\begin{align}%
\label{IMDE0}
\begin{split}
     {B}_{n,j-1} &:=\sqrt{r_n T^{-1}  \widehat C_{n,j-1}}, 
     \\
     \widehat C_{n,j} &:= \sum_{i=1}^n (\Delta_i^n X)^2 {\bf 1}_{\{|\Delta_i^n X|\leq  B_{n,j-1}\}},\quad j\geq1.%
\end{split}
\end{align}
For each fixed $n$, the sequence $\{\widehat C_{n,j}, j\geq 0\}$ will be shown to  
always ``stabilize'' %
in the sense that the index
\begin{equation}\label{e:def_jn}
	j_n:=\min\big\{j\geq{}0: \widehat C_{n,j} =\widehat C_{n,j+\ell}\text{~ for all ~}\ell\geq 0\big\} ,%
\end{equation}
always exists. With regards to \eqref{e:fixedpoint}, the above scheme can be viewed as a fixed-point iteration for the mapping $\xi \mapsto \sum_{i=1}^n (\Delta_i^n X)^2 {\bf 1}_{\{(\Delta_i^n X)^2\leq  r_nT^{-1}\xi \}}$.  
\item  Denoting  $B_n:=B_{n,j_n}$, we then define the uniform thresholding estimator
\begin{equation}\label{DfnME0}
	 \widehat C_n:=  \widehat C_{n,j_n}=  \sum_{i=1}^n (\Delta_i^nX)^2 \,{\bf 1}_{\{|\Delta_i^n X|\leq  B_n\}}.
\end{equation}
\end{enumerate}
The second class of estimators we consider are based on time-varying (or local) thresholding, namely estimators $\widehat C^*_n$ of the type
\begin{equation} \label{e:C_star}
\widehat C^{*}_{n} = \sum_{i=1}^n (\Delta_i^nX)^2 \,{\bf 1}_{\{|\Delta_i^n X|\leq  B^*_n(i)\}},
\end{equation}
for appropriate  data-driven local thresholds $B^*_n(i)$, $i=1,\ldots, n$, arising from fixed-point equations, whose precise definition is deferred to Section \ref{s:locthresh}.
The central focus of this work is to study  the classes of estimators defined by \eqref{DfnME0} and \eqref{e:C_star}.  Below we state our main assumptions relating to the model \eqref{model:con_fija}.

\begin{assumption}\label{assump:Coef0} \hfill
\begin{enumerate}
	\item[(i)] $\sigma,\gamma,b$ are c\`adl\`ag adapted;  $\inf_{0\leq t \leq T}\sigma_t>0;$

\item[(ii)] $L$ is a L\'evy process with characteristics $(0,0,\nu)$  (with respect to the truncation function ${\bf 1}_{\{|x|\leq 1\}}$, see \cite{sato:1999}) such that,  for some $\alpha\in(0,2)$ and $K_{\pm}\in[0,\infty)$ with $K_+\vee K_->0$,
$$
\lim_{x\to 0^+} x^\alpha\nu((x, \infty))=K_+, \quad \lim_{x\to 0^-} x^\alpha\nu((-\infty,x))=K_-;$$
$J$ is a general finite-activity jump process of the form $J_t=\sum_{i=1}^{N'_t} \xi_i$, where $\{\xi_i\}_{i\geq 1}$ satisfy $\bP(\xi_{i}\neq{}0)=1$, and $\{N'_t\}_{t\geq 0}$ is a non-explosive counting process; $W$ is a Brownian motion independent of $L$.  All processes are adapted.

	\item[(iii)]  There is a sequence $\tau_n$ of stopping times increasing to infinity and a positive sequence $K_n$ such that  %
	\begin{equation}\label{e:stopping_time_condition}
t\leq \tau_n \implies \begin{cases}
|b_{t}|+ |\sigma_t|  + |\gamma_t| \leq K_n,\\
\bE( |\gamma_{t+s }-\gamma_{t}|^2|\mathcal F_t)\leq K_n s^{},\quad s>0.
\end{cases}
\end{equation}
\end{enumerate}
\end{assumption}
Our assumptions, in particular, do not require $\sigma$ to be a semimartingale, which is important in rough volatility modeling. Note that the condition on $\gamma$  in \eqref{e:stopping_time_condition} is satisfied  whenever $\{\gamma_t\}_{t\geq{}0}$ is an It\^o semimartingale with locally bounded characteristics.

 We use the following standard notation throughout the paper: for two sequences $a_n,b_n>0$,
\begin{itemize}
	\item $a_n\sim b_n$ means that $a_n/b_n \to 1$;
	\item $a_n\ll b_n$ means that $a_n=o(b_n)$, i.e., $\lim_{n\to\infty}a_n/b_n=0$; $a_n \lesssim b_n$ means $a_n=O(b_n)$,  i.e., $\limsup_{n\to\infty} a_n/ b_n<\infty$;
	\item $a_n\gg b_n$ means that $b_n=o(a_n)$;  $a_n \gtrsim b_n$ means $b_n=O(a_n)$;
	\item 
$\toprob$ denotes convergence in probability;
\item 
$\toDist$ denotes convergence in law;
\item 
$\toDistSt$ denotes stable convergence in law.
\end{itemize}

\section{Main results}\label{Sec:DebiasMthd}
\subsection{Uniform thresholding}\label{s:uniformthresh}
In this section, we study the asymptotic properties of the estimator $\widehat C_n$ introduced in Section \ref{trvSec:Model}. 
As stated in the introduction, in \cite{figueroa-lopez:mancini:2019} it was shown that the first-order asymptotic behavior of the MSE-optimal threshold $\varepsilon_n^\star$ under the idealized assumption of constant volatility takes the form \eqref{e:estar}, which cannot be implemented feasibly in practice as it depends on knowledge of the volatility itself. However, exploiting this relationship is the  driving principle behind the iterative algorithm leading to the estimators $\widehat C_n$.  The proposed method can be seen as a natural mechanism to make such a threshold feasible  by taking the sequence $r_n=r(h_n)=2h_n\log(1/h_n)$  in the iterative procedure \eqref{IMDE0}. %
  As we will see, our iterative approach will enable us to asymptotically ``attain'' the infeasible threshold $\varepsilon^\star_n$, in principle rendering near-MSE-optimal behavior possible in practice.

In general,  it is a nontrivial task to establish asymptotic properties of the $\widehat C_{n}$ defined in \eqref{DfnME0}, even drawing upon results from the existing literature,  which has almost exclusively focused on deterministic uniform thresholding. 
For instance, in spite of the fact that $\widehat C_n$ satisfies the random fixed-point equation
\begin{equation*}\label{FPEFME0}
	\widehat C_n=
	\sum_{i=1}^n (\Delta_i^n X)^2 {\bf 1}_{\{(\Delta_i^n X)^2\leq  r_n T^{-1}\widehat C_n \}},
\end{equation*}
such an expression offers little insight into finding closed-form expressions for $\widehat C_n$. %

The central idea in our approach rests on relating the sequence of iterates $\widehat C_{n,j}$  to an iterative sequence of ``oracle-like'' estimators $\widetilde C_{n,j}(y_n)$ that make use of the (unknown) location of jumps of size $y_n>0$ or larger, where $y_n\to 0$ at an appropriate rate.  More concretely, for each $y\in(0,1)$, by virtue of the L\'evy-It\^o decomposition of $L$, we may reexpress
\begin{align}\label{eq:L_decompose}
b_tdt + \gamma_t dL_t& = \Big(b_t+ \gamma_t \int_{\{y<|x|\leq 1\}}x \nu(dx) \Big)dt + \gamma_t \int_{\{|x|\leq y \}} x \wt \mu(dx,dt) + \gamma_t\int_{\{|x|>y \}} x \mu(dx,dt) \notag\\
&=: {db_t(y)} + \gamma_t d M_t(y)+ \gamma_t  d H_t(y),
\end{align}
where $\mu$ is the jump measure of $L$ with intensity $\nu (dx)dt$, and $\wt \mu(dx,dt)= \mu(dx,dt)-\nu(dx)dt$ is the corresponding compensated jump measure.  Above, $H_t(y)$ is a compound Poisson process with finite jump activity satisfying $ H_t(y)= \sum_{i=1}^{N_t(y)}\zeta_i(y)$, where $N_t(y)$ is a Poisson process with rate $\lambda(y)= \int_{|x|>y} \nu(dx)$, and $\{\zeta_i(y)\}_{i\geq 1}$ are i.i.d. and supported on $(-\infty,y)\cup(y,\infty)$ with distribution
$\frac{{\bf 1}_{\{|x|\geq y\}}\nu(dx)}{\nu(|x|>y)}$.
For each $y>0$, we first define the random set
\begin{equation}\label{e:I_n(y)}
\mathcal{I}_n(y)=\{i:  \Delta_i^n N(y) = 0,\, \Delta_i^n N' = 0\},
\end{equation}
 which consists of all indices corresponding to intervals where no ``large" jumps have occurred.
For a sequence $y=y_n\to 0$, we then define an 
oracle analog of TRV that  eliminates any increments corresponding to time intervals in which ``large'' jumps of $X$ occur:
\begin{align}
    \label{e:oracle(y)}
\mathscr C_{n}(y) &:= \sum_{i=1}^n(\Delta_i^nX)^2{\bf 1}_{\{\Delta_i^n N(y) = 0,\Delta_i^n N' = 0\}}=\sum_{i\in\mathcal I_n(y)} (\Delta_i^n X)^2.
\end{align}
To connect $\mathscr C_n(y)$ with $\widehat C_n$, we then construct an iterative sequence $\{\wt C_{n,j}(y)\}_{j \geq 1}$,  analogous to \eqref{IMDE0}, by  setting 
$\wt C_{n,0}(y):=\widehat{C}_{n,0}$ (so that the oracle sequence has the same initial value as the original sequence $\widehat C_{n,j}$) and 
recursively  define, for $j\geq{}1$,
\begin{align}\label{def:C_tilde_y}
\begin{split}
    \wt{B}_{n,j-1}(y) &:=\sqrt{r_n  T^{-1}\wt C_{n,j-1}(y)},\\
    \wt C_{n,j}(y)&:= \sum_{i\in\mathcal I_n(y)} (\Delta_i^nX)^2 \,{\bf 1}_{\{|\Delta_i^n X|\leq  \wt{B}_{n,j-1}(y)\}}.
    \end{split}
\end{align}
Though the variables $\widetilde C_{n,j}(y_n)$   and $\mathscr C_{n}(y)$ are not feasible estimators themselves, their asymptotic behavior in fact completely determines that of $\widehat C_n$ provided the auxiliary sequence $y_n$ tends to 0 at an appropriate rate.  In our arguments, we demonstrate that
$$
   \widetilde C_{n,n+1}(y_n) \leq \widehat C_n\leq    \mathscr C_{n}(y_n)+ R_n,
$$
for an appropriate asymptotically negligible remainder $R_n$. The above relation allows us to analyze $\widehat C_n$ in terms of the array of oracle iterates $\{\widetilde C_{n,j}(y_n)\}_{{ j\geq 1}}$ and the oracle itself $\mathscr C_{n}(y_n)$.  We then demonstrate that $\{\widetilde C_{n,j}(y_n)\}_{ j\geq 1}$ are all asymptotically equivalent to the oracle $\mathscr C_n(y_n)$ (Proposition \ref{prop:allequal}); effectively reducing the problem to the analysis of $\mathscr C_n(y_n)$, which is considerably simpler.

We now proceed to describe the class of initial estimators  we consider in our procedure.  Apart from some mild regularity conditions, they are required only to be consistent for $C_T$ when the underlying process is continuous, allowing for a great deal of flexibility in the choice of initialization.
More specifically, for a generic process $Y$, let
\begin{equation}\label{e:def_C0}
 \widehat C_{n,0}(Y)=  \sum_{i=1}^{n-d+1} F(\Delta_i^n Y,\ldots,\Delta_{i+d-1}^n Y),%
 \end{equation}
 where $F:\bR^d\to [0,\infty)$ satisfies, for some $\delta_0 \in (0,2]$ and for all $\mathbf x,\mathbf y\in \bR^d$ with $\|\mathbf y\|\vee\|\mathbf x\|\leq 1,$
 \begin{align}
 F(\mathbf x) &\leq K \|\mathbf x\|_\infty^2\label{e:F_maxbound},\\
 \big|F(\mathbf x+\mathbf y)-F(\mathbf x) \big| &\leq  K (\|\mathbf y\|_\infty\wedge \|\mathbf x\|_\infty)^{\delta_0}(\|\mathbf y\|_\infty^{2-\delta_0}+ \|\mathbf x\|_\infty^{2-\delta_0}),\label{e:Lipbound}
 \end{align}
 for some $K<\infty$.  Above, $\|\mathbf x\|_\infty=\max_{1\leq{}i\leq{}d}|x_{i}|$. An initial estimate $\widehat C_{n,0}$ is said to belong to class $\mathcal C$ if $\widehat C_{n,0}=\widehat C_{n,0}(X)$, where $\widehat C_{n,0}(\cdot)$ is given by \eqref{e:def_C0}, and satisfies%
\begin{equation}\label{e:C0_consistent_cont_case}
\widehat C_{n,0}\big(\sigma\! \cdot\! W\big) \toprob \int_0^{T}\sigma_s^2 ds,
\end{equation}
 where $(\sigma\! \cdot\! W)_{t}:=\int_0^t \sigma_s dW_s$. 
 
\begin{remark}\label{CmntInitCa} The class $\mathcal C$ includes, for instance,  the ordinary realized variance (i.e., $F(x)=x^2)$, multipowers of the type $
F(x_1,\ldots,x_d)\propto \prod_{i=1}^{d} |x_i|^{r_i}$ with $r_i\geq 0$ and $r_1+\ldots+r_d=2$, and the nearest-neighbor truncation estimators of \cite{andersen:dobrev:schaumburg:2012}, among other possibilities.
\end{remark}
We now state our first main result.

\begin{theorem}\label{thm:consistency_clt_con} Suppose $\widehat C_{n,0}$ belongs to class $\mathcal C$, and that the sequence $r_n\to 0$ satisfies
\begin{equation}\label{e:rn_rate}
r_n\gg h_n\log (1/h_n).
\end{equation}
Then, the following assertions hold:
 \begin{enumerate}
    \item[(i)] If, further 
    $r_n\ll \left(h\log (1/h)\right)^{\frac\alpha2}$,
    then, as $n\to\infty$,
    $$ \widehat C_{n}\toprob C_T.$$%
    \item[(ii)] If $\alpha \in (0,1)$ and further
$ r_n \ll h_n^{\frac{\alpha+1}{2}} \left(\log (1/h)\right)^{\frac\alpha2},$
    then,  as $n\to\infty$,
 $$
 \frac{1}{\sqrt{h_n}}\left(\widehat C_n - C_T\right)\toDistSt \mathcal{N}\left(0,2\int_0^T\sigma^4_sds\right).%
 $$

    \item[(iii)] Suppose $\alpha \in (1,2)$ and $\gamma_t\equiv 1$.  Then,  as $n\to\infty$, for any choice of $r_n$ satisfying \eqref{e:rn_rate},
 $$
 \frac{1}{\sqrt{h_n}}\big(\widehat C_n - C_T\big)\toprob \infty.%
 $$
     \end{enumerate}
\end{theorem}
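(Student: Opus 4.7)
The plan follows the roadmap sketched before the theorem: sandwich the iterative estimator as $\widetilde C_{n,n+1}(y_n)\leq \widehat C_n\leq \mathscr C_n(y_n)+R_n$ for a suitable auxiliary sequence $y_n\to 0$, and then reduce each of the three parts to the analysis of the much simpler oracle $\mathscr C_n(y_n)$ using Proposition 3.1 (asymptotic equivalence of all oracle iterates with $\mathscr C_n$). The lower sandwich is proved by induction on $j$: since $\widehat C_{n,0}=\widetilde C_{n,0}(y_n)$ and the fixed-point map $\xi\mapsto\sum_{i=1}^n(\Delta_i^n X)^2\mathbf 1_{\{(\Delta_i^n X)^2\leq r_n\xi/T\}}$ is monotone in $\xi$ and dominates its counterpart restricted to $\mathcal I_n(y_n)$, one obtains $\widetilde C_{n,j}(y_n)\leq \widehat C_{n,j}$ for every $j$, and stabilization at some $j_n\leq n$ yields the lower bound. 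For the upper bound, on any $i\notin\mathcal I_n(y_n)$ the increment $|\Delta_i^n X|$ is of order one while $B_n\to 0$, so with high probability those indices are discarded by the indicator; the residual $R_n$ is controlled by Assumption 2.1(iii) via standard jump-continuous cross-term estimates.

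Once the sandwich is established, on $i\in\mathcal I_n(y_n)$ one uses the decomposition
\begin{align*}
(\Delta_i^n X)^2=\sigma_{t_{i-1}}^2(\Delta_i^n W)^2+A_{n,i}+E_{n,i},
\end{align*}
where $A_{n,i}$ comes from the small-jump term $\gamma\, dM(y_n)$ and $E_{n,i}$ collects drift, cross, and coefficient-regularity contributions. Choosing $y_n\sim B_n$ and appealing to the rate condition $r_n\ll(h\log(1/h))^{\alpha/2}$, both $\sum_i A_{n,i}$ and $\sum_i E_{n,i}$ are $o_\bP(1)$, while the leading Brownian sum converges to $C_T$ by a Riemann-sum argument, proving (i). For (ii), a Jacod-type stable CLT applied to $\sum_i \sigma_{t_{i-1}}^2(\Delta_i^n W)^2-C_T$ gives the $\mathcal N(0,2\int_0^T\sigma^4_sds)$ limit at rate $\sqrt{h_n}$; the sharper rate $r_n\ll h_n^{(\alpha+1)/2}(\log(1/h))^{\alpha/2}$, together with $\alpha<1$ (so $L$ is of finite variation and the compensating drift $\int_{|x|\leq y_n}x\,\nu(dx)$ is benign), is precisely what makes the small-jump contributions $o_\bP(\sqrt{h_n})$.

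The main obstacle is part (iii). Here I would use the lower sandwich and argue that $(\widetilde C_{n,n+1}(y_n)-C_T)/\sqrt{h_n}\toprob+\infty$ for any admissible $y_n$. The positive contribution driving divergence is the predictable quadratic variation of the truncated small-jump martingale: on surviving intervals $i\in\mathcal I_n(y_n)$ with $|\Delta_i^n X|\leq \wt B_{n,n}(y_n)\sim B_n$, the conditional expectation of the corresponding piece of $(\Delta_i^n X)^2$ contains a term $\asymp h_n\int_{|x|\leq B_n}x^2\nu(dx)\asymp h_n B_n^{2-\alpha}$, summing to a positive bias of order $TB_n^{2-\alpha}$. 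Since $r_n\gg h_n\log(1/h_n)$ forces $B_n^{2-\alpha}\gg(h_n\log(1/h_n))^{(2-\alpha)/2}$, and $(2-\alpha)/2<1/2$ when $\alpha>1$, this bias dwarfs $\sqrt{h_n}$. The delicate step, which is the crux of the argument, is to verify that a positive fraction (in probability) of indices $i\in\mathcal I_n(y_n)$ actually passes the truncation $|\Delta_i^n X|\leq \wt B_{n,n}(y_n)$, so that this bias is retained rather than removed; this requires a conditional anti-concentration estimate for $\Delta_i^n X$, which follows from Gaussian lower-tail bounds for $\sigma\!\cdot\!W$ combined with small-deviation estimates for the $\alpha$-stable-like compensated small-jump integral.
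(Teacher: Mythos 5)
Your overall architecture --- the sandwich $\widetilde C_{n,n+1}(y_n)\leq \widehat C_n\leq \mathscr C_n(y_n)+R_n$ followed by reduction to the oracle via the equivalence proposition --- is exactly the paper's, and your treatment of parts (i)--(ii) after the reduction (Riemann-sum consistency, stable CLT for the realized variance of $\sigma\cdot W$, with the small-jump and drift terms pushed into negligible remainders) matches the proof of Proposition \ref{p:prop2}. However, there are three genuine gaps. First, your justification of the upper bound is wrong as stated: for indices $i\notin\mathcal I_n(y_n)$ the increment is \emph{not} of order one in the infinite-activity case --- it may contain a jump of $L$ of size barely above $y_n$, hence far below $B_n$, and such increments are \emph{not} discarded. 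The correct control is $R_n\leq (N_1(y)+N_1')B_{n,n}^2=O_P(y^{-\alpha}r_n)+O_P(r_n)$, and the requirement $r_n\ll y^\alpha$ is precisely where the upper constraints $r_n\ll(h\log(1/h))^{\alpha/2}$ (resp.\ $r_n\ll h^{(\alpha+1)/2}(\log(1/h))^{\alpha/2}$ after multiplying by $\sqrt n$) enter; your proposal never uses these hypotheses, which is a sign the mechanism is missing. Relatedly, your choice $y_n\sim B_n$ is incompatible with the equivalence proposition you invoke: that result requires $y_n\ll(h\log n)^{1/2}$, whereas $B_n\asymp\sqrt{r_n}\gg(h\log n)^{1/2}$ by \eqref{e:rn_rate}; the paper instead takes $y_n$ in the window $r_n^{1/\alpha}\vee h^{(1/\alpha\wedge 1)-\delta}\ll y_n\ll(h\log n)^{1/2}$, which is nonempty exactly under the stated rate conditions.

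Second, part (iii) is where your proposal does not close. You correctly identify the source of divergence (the retained small-jump quadratic variation of order $y^{2-\alpha}\gg h^{1/2}$ when $\alpha>1$), but you then declare the crux to be an anti-concentration estimate showing that a positive fraction of indices in $\mathcal I_n(y_n)$ survive the truncation, and you do not supply it. This step is not needed, and the paper's route avoids it entirely: Proposition \ref{prop:allequal} shows that with probability tending to one \emph{every} index in $\mathcal I_n(y)$ passes the threshold, because $B_n\asymp\sqrt{r_n}$ with $r_n\gg 2h\log(1/h)\,\overline\sigma^2/\!\int_0^T\sigma_s^2ds$ strictly dominates the a.s.\ modulus of continuity $\sqrt{2\overline\sigma^2 h\log(1/h)}$ of $\sigma\cdot W$ (Lemma \ref{l:mod_of_cont}) plus the uniformly $o_P(y)$ compensated small-jump contribution (Lemma \ref{l:maxbound_chi}). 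Hence on an event of probability tending to one the lower bound is exactly $\mathscr C_n(y)$, and Proposition \ref{p:prop2}(iii) with $h^{1/2}\ll y\ll(h\log n)^{1/2}$ gives $\sqrt n(\mathscr C_n(y)-C_T)\toprob\infty$. As written, your (iii) is an unproved claim resting on a harder estimate than the problem requires.
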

Observe that the upper bounds on $r_n$ in (i)--(ii) above depend on the jump activity index $\alpha$ and become more restrictive as $\alpha$ increases. Bearing this in mind, we make the following remarks.
\begin{remark}
A common choice in the literature {for both benchmarking estimation performance and applications} is $\sqrt{r_n}= 4 h_n^{0.49}$ (e.g., \cite{jacod:todorov:2014,li:todorov:tauchen:2017,ding:li:liu:zheng:2023}), which leads to efficient CLTs across nearly the entire range $\alpha<1$;  Section \ref{s:montecarlo} compares the performance of this threshold choice for TRV against our iterative methods.  Though the hypotheses of Theorem \ref{thm:consistency_clt_con} require  $r_n\gg h_n\log (1/h_n)$ in general, under the idealized assumption of constant volatility, our proofs demonstrate that  $r_n$ can be chosen of the form $r_{n}={2(1+\eta)h_n\log (1/h_n)}$, for any $\eta>0$ (cf. \eqref{e:estar}). In general, the common choice $\sqrt{r_n}= 4 h_n^{0.49}$ seems to also work reasonably well also for the iterative estimator $\widehat C_n$; for further discussion of selection of the threshold rate $r_n$ and initializations in practice, see Section \ref{s:montecarlo}.
\end{remark}

\begin{remark} When $\alpha\in(1,2)$, a slowly-decaying bias term  renders convergence rates of order $n^{-1/2}$ impossible for $\trv$ itself under deterministic thresholding (c.f. \cite{mancini:2011}), which is reflected for $\widehat C_n$ in case (iii) of Theorem \ref{thm:consistency_clt_con}.  For these values of $\alpha$, under deterministic thresholds for $\trv$, specialized debiasing techniques are required to achieve the optimal $n^{-1/2}$ rate (c.f. \cite{boniece:figueroa-lopez:han:2024}). %
See also \cite{jacod:todorov:2014,jacod:todorov:2018} for other efficient methods based on empirical characteristic functions (and again deterministic tuning parameters). %
We leave the extension of these debiasing techniques to data-driven parameter tuning as a topic for future research.
\end{remark}

\
\subsection{Time-varying thresholds}\label{s:locthresh}
For the best possible finite-sample performance, heuristically one should set the threshold $\varepsilon$ \textit{as small as possible} -- to remove as many jumps as possible -- but allow it to remain large enough so that a sufficient number increments remain to ultimately yield efficient estimates of $C_T$.  
From this perspective, the asymptotic lower bound on the rate $r_n$ given in the hypotheses of Theorem \ref{thm:consistency_clt_con} may appear unsatisfactory, as it precludes rates as fast as the optimal threshold $\varepsilon^\star_n\sim \sqrt{2\sigma^2 h_n\log(1/h_n)}$ in the constant volatility case. It is natural to suspect that faster rates may be possible for potential  improvement in $\widehat C_n$. However, the next result shows this is not true, in general.

\begin{proposition}\label{p:divergence_C_hat} For a given semimartingale $Y$, let $\trv_n(\varepsilon;Y)=\sum_{i=1}^{n}\big(\Delta_{i}^{n}Y\big)^{2}{\bf 1}_{\{|\Delta_{i}^{n}Y|\leq\varepsilon\}}$, and let $C_T(Y)$ denote its  predictable quadratic variation. For a given $c_0>0$, define the threshold
$$
\vartheta_n(Y):= \sqrt{  c_0C_T(Y) h_n \log(1/h_n)}.
$$
 Then, there exists a semimartingale $Y'$ such that $\textnormal{TRV}_n(\vartheta_n(Y');Y') \toprob C_T(Y')$,  but
$$
\sqrt n \big(\textnormal{TRV}_n(\vartheta_n(Y');Y') - C_T(Y')\big)\toprob -\infty.
$$

\end{proposition}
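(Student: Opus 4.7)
The plan is to produce a continuous semimartingale $Y'$ for which the self-referential threshold $\vartheta_n(Y')$ is calibrated too aggressively: it truncates a nontrivial fraction of Brownian increments and produces a deterministic bias whose magnitude, after rescaling by $\sqrt n$, dominates the centered fluctuations. Given $c_0>0$ and the fixed horizon $T$, I would fix any $\epsilon\in(0,T\wedge c_0^{-1})$ and take $Y'_t:=W_{t\wedge\epsilon}$, a standard Brownian motion stopped at $\epsilon$. Then $Y'$ is a continuous It\^o semimartingale with $C_T(Y')=\epsilon$ and deterministic threshold $\vartheta_n(Y')=\sqrt{c_0\epsilon\,h_n\log(1/h_n)}$; the key quantity is the normalized threshold $\lambda_n:=\vartheta_n(Y')/\sqrt{h_n}=\sqrt{\kappa\log(1/h_n)}$ with $\kappa:=c_0\epsilon\in(0,1)$.

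Because $\vartheta_n(Y')$ is deterministic, I would decompose $\sqrt n\bigl(\trv_n(\vartheta_n(Y');Y')-C_T(Y')\bigr)=\sqrt n\,\mathcal B_n+\sqrt n\,\mathcal F_n$, where $\mathcal B_n:=\bE[\trv_n(\vartheta_n(Y');Y')]-\epsilon\leq 0$ and $\mathcal F_n$ is the centered remainder. The increments $\Delta_i^n Y'$ are independent centered Gaussians of variance at most $h_n$, so $\Var(\mathcal F_n)\leq\sum_i\bE[(\Delta_i^nY')^4]\leq 3Th_n$, giving $\sqrt n\,\mathcal F_n=O_P(1)$. For the bias, on $\{i:t_i\leq\epsilon\}$ the increments satisfy $\Delta_i^nY'\eqDist\sqrt{h_n}\,Z$ with $Z\sim\mathcal{N}(0,1)$, so
\[
-\mathcal B_n=\sum_{i:\,t_i\leq\epsilon}h_n\,\bE\!\left[Z^2{\bf 1}_{\{|Z|>\lambda_n\}}\right]+O(h_n),
\]
with the $O(h_n)$ accounting for the at-most-one interval straddling $\epsilon$. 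The standard tail asymptotic $\bE[Z^2{\bf 1}_{\{|Z|>\lambda\}}]=2\lambda\phi(\lambda)+2\bar\Phi(\lambda)\sim\sqrt{2/\pi}\,\lambda\,e^{-\lambda^2/2}$, combined with $e^{-\lambda_n^2/2}=h_n^{\kappa/2}$, then gives $-\mathcal B_n\sim\epsilon\sqrt{2\kappa/\pi}\,\sqrt{\log(1/h_n)}\,h_n^{\kappa/2}$, and hence $\sqrt n\,|\mathcal B_n|\sim\sqrt T\,\epsilon\sqrt{2\kappa/\pi}\,\sqrt{\log(1/h_n)}\,h_n^{\kappa/2-1/2}\to\infty$ since $\kappa<1$. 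Combined with $\sqrt n\,\mathcal F_n=O_P(1)$ and $\mathcal B_n\leq 0$, this yields $\sqrt n(\trv_n(\vartheta_n(Y');Y')-C_T(Y'))\toprob-\infty$, while the consistency $\trv_n(\vartheta_n(Y');Y')\toprob C_T(Y')$ follows from $|\mathcal B_n|\to 0$ and $\Var(\mathcal F_n)\to 0$.

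The hard part will be extracting the precise rate $h_n^{\kappa/2}$ from the Gaussian tail and matching it against $\sqrt n\asymp h_n^{-1/2}$: the threshold $\vartheta_n$ is calibrated to the \emph{global} quantity $C_T(Y')=\epsilon$, while the truncation bias is governed by the \emph{local} normalized threshold $\lambda_n$, and the construction exploits this mismatch by ensuring $\kappa<1$, so that $h_n^{\kappa/2}$ decays strictly slower than $h_n^{1/2}$. Beyond this asymptotic, the remainder of the argument is routine bookkeeping. If one prefers $Y'$ to satisfy $\inf_t\sigma_t>0$ as in Assumption \ref{assump:Coef0}(i), the same argument applies to $Y'_t=\int_0^t\sigma_s\,dW_s$ with $\sigma_s=\sigma_1{\bf 1}_{[0,\epsilon]}(s)+\sigma_2{\bf 1}_{(\epsilon,T]}(s)$ and $\sigma_1\gg\sigma_2>0$, with $\kappa$ replaced by $c_0[\epsilon+(\sigma_2/\sigma_1)^2(T-\epsilon)]$, which can be made less than $1$ by taking $\sigma_2/\sigma_1$ sufficiently small.
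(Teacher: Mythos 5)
Your proposal is correct and follows essentially the same route as the paper's proof: both exploit a two-regime volatility so that the effective local truncation constant $c_0 C_T(Y')/\sigma_{\mathrm{loc}}^2$ drops below $1$, extract the bias rate from the Gaussian tail asymptotic $\bE[Z^2\mathbf 1_{\{|Z|>\lambda\}}]\sim\sqrt{2/\pi}\,\lambda e^{-\lambda^2/2}$, and control the centered fluctuations by a second-moment bound. Your primary example (Brownian motion stopped at $\epsilon$) is just the degenerate $\sigma_2=0$ case of the paper's two-level construction, which you recover verbatim in your closing remark, and all the estimates check out.
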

Note  that 
the estimator $\widehat C_n$ defined in (\ref{DfnME0}) is a TRV with threshold $\varepsilon_n= \big(c_0\widehat C_n h_n \log(1/h_n)\big)^{1/2}$, which is approximately equal to $\vartheta_n$ if $\widehat C_n$ remains a consistent estimator under this threshold choice.  In that case, the above result suggests that $\widehat C_n$  will not be rate-efficient in general with the threshold rate $r_n=c_0 h_n\log(1/h_n)$ and may remove too many increments even if jumps are completely absent from the process $X$. %
In particular, the proof of Proposition \ref{p:divergence_C_hat} illustrates that efficiency losses can result from volatility paths that exhibit significant jumps.  A natural way to remedy this is  to consider localized thresholds that adapt to the volatility level. In this way, thresholds corresponding to periods of high volatility are increased, and conversely, thresholds for periods of low volatility are decreased, so as to prevent efficiency losses that might otherwise occur with uniform thresholding. This is the central motivation behind our second class of estimators,  which utilize spot volatility estimates to locally tune the threshold.

To this end, for a given even integer $k_n\leq n$  and $B>0$, we define
\begin{align*}%
\widehat\sigma^2_n(i;B) &:=  \frac{ 1}{h_nk_n} \sum_{\ell=i-k_n/2+1}^{i +k_n/2} \big(\Delta_\ell^n X\big)^2{\mathbf 1}_{\{|\Delta_\ell^n X|\leq B\}},\quad \ell=1,\ldots, n,
\end{align*}
where we set $\Delta_i^n X =0 $ if $i\leq 0$ or $i >n$.  The above estimator is a  type of kernel-based estimator of the spot volatility $\sigma_{t_i}^2$,  as defined in \cite{fan:wang:2008,kristensen:2010}, with kernel function $K(x)=\frac{1}{2}{\bf 1}_{[-1,1]}$ and bandwidth $b_n=k_n h_n$ (see \cite{jacod:protter:2011} for the asymptotic theory of the estimator in the case of one-sided uniform kernels $K(x)={\bf 1}_{[0,1]}$ and  \cite{figueroa-lopez:li:2020,figueroa-lopez:wu:2022} for general kernels).
Our second thresholding scheme for the localized thresholding estimator $\widehat C_n^*$ then proceeds as follows:
\begin{enumerate}
	\item  First, we begin with  some initial local volatility estimates $\widehat c_{n,0}(i)$, $i=1,\ldots,n$, ideally free of tuning parameters. %
Natural choices include
$\widehat c_{n,0}(i) = \frac{1}{h_nk_n} \sum_{\ell=i-k_n/2+1}^{i +k_n/2}  \big(\Delta_\ell^n X\big)^2$ or the analogous local BPV estimator $\widehat c_{n,0}(i) = \frac{1}{h_nk_n} \frac{\pi}{2}\sum_{\ell=i-k_n/2+1}^{i +k_n/2}  |\Delta_\ell^n X||\Delta_{\ell+1}^n X|$ (see Theorem \ref{thm:consistency_clt_2} below for precise conditions on $\widehat c_{n,0}(i)$). 

\item 
Next, for each  $i=1,\ldots, n$ and a given deterministic rate sequence $r_n^*\to 0$, we define a local threshold for the $i$--th increment of $X$. With regards to \eqref{e:fixedpoint}, each of these local thresholds can be viewed as a fixed-point iteration for one of the $n$ maps $\xi \mapsto \widehat\sigma^2_n\big(i; \sqrt{r_n^*\xi}\big)$, $i=1,\ldots, n$.  More specifically, for $j\geq{}1$, we iteratively define:
\begin{align}\label{itrtiveOptLocEstb}
\begin{split}
B^{*}_{n,j-1}(i) &:=  \sqrt{r^*_n \widehat c_{n,j-1}(i)}, \\
\widehat c_{n,j}(i)&:=\hat \sigma^2_{n}(i,B^{*}_{n,j-1}(i)).
\end{split}
\end{align}
We then set\vspace{-1ex}
\begin{align}\label{e:stablizing_jnstar}
B^*_n(i)&:= B^{*}_{n,j^{*}_n}(i),\quad i=1,\ldots,n,
\end{align}
where $j_n^{*} :=  \min\big\{j\geq{}0: \widehat c_{n,j}(i)=\widehat c_{n,j+\ell}(i)\text{~ for all ~} 1\leq i \leq n, ~ \ell\geq 0\big\}$.  
\item Finally, we define the localized threshold estimator
\begin{equation}\label{DfnOptCLoc}
\widehat C^{*}_{n} := \sum_{i=1}^n (\Delta_i^nX)^2 \,{\bf 1}_{\{|\Delta_i^n X|\leq  B^*_n(i)\}}.
\end{equation}
\end{enumerate}

Let us now introduce the class of initial estimates $\mathcal C^\text{spot}$ for time-varying thresholds, which is essentially a localized analog of the class $\mathcal C$ of initial estimates defined in Section \ref{s:uniformthresh}. To this end, for a generic process $Y$, define
\begin{equation}\label{e:def_little_C0}
\widehat c_{n,0}(i;Y)= \frac{ 1}{h_nk_n}\sum_{\ell=i-k_n/2+1}^{i +k_n/2} F(\Delta_i^n Y,\ldots, \Delta_{i+d-1}^nY),\quad i=1,\ldots,n,
 \end{equation}
 where $F:\bR^d\to [0,\infty)$, and for convenience we set $F( \Delta_i^n Y,\ldots,\Delta_{i+d-1}^n Y)=0$ if $i\leq 0$ or $i+d-1>n$. We say the initializing threshold constants $\widehat c_{n,0}(i)$ belong to the class $\mathcal C^\text{spot}$  if $\widehat c_{n,0}(i)=\widehat c_{n,0}(i;X)$,  $i=1,\ldots n$, where $\widehat c_{n,0}(i;\,\cdot\,\,)$ are of the form \eqref{e:def_little_C0} and satisfy%
 \begin{align}\label{e:little_c0_consistent_contcase}
 \max_{1\leq i \leq n}\Bigg|  \widehat c_{n,0}(i;\sigma\!\cdot\!W) - \frac{ 1}{h_nk_n} \int_{h_n(i-k_n/2+1)}^{h_n(i+k_n/2)}\sigma_t^2dt%
 \Bigg|\toprob 0.
\end{align}
We are now in a position to state our second main result.

\begin{theorem}\label{thm:consistency_clt_2} Let  $r^*_n$ satisfy 
\begin{align}\label{e:rstar_rate}
\liminf_{n\to\infty} \frac{r_n^*}{2h_n \log (1/h_n)} > 1,%
\end{align}
 and suppose that   $n^a \ll k_n\ll n$ for some $0<a<1$.\
Asssume further that the initializing threshold constants $\widehat c_{n,0}(i)$, $i=1\ldots,n$, belong to the class $\mathcal C^\text{spot}$. %
Then, the following assertions hold:
 \begin{enumerate}
    \item[(i)] If, further, 
    $r^*_n\ll \left(h\log (1/h)\right)^{\frac\alpha2}$, 
    then, as $n\to\infty$,
    $$ \widehat C^{*}_{n}\toprob C_T.$$%
    \item[(ii)]  If $\alpha \in (0,1)$ and further
$ r_n^* \ll h_n^{\frac{\alpha+1}{2}} \left(\log (1/h)\right)^{\frac\alpha2},$ %
    then,  as $n\to\infty$,
 $$
\frac{1}{ \sqrt{h_n}}\left(\widehat C^{*}_n - C_T\right)\toDistSt \mathcal{N}\left(0,2\int_0^T\sigma^4_sds\right).
 $$
     \item[(iii)] Suppose $\alpha \in (1,2)$ and $\gamma_t\equiv 1$.  Then,  for any choice of $r_n^*$ satisfying \eqref{e:rstar_rate},  as $n\to\infty$,
 $$
 \frac{1}{\sqrt{h_n}}\big(\widehat C_n^* - C_T\big)\toprob \infty.%
 $$

     \end{enumerate}
\end{theorem}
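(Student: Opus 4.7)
The plan is to mirror the oracle-based strategy employed for the uniform-threshold estimator $\widehat C_n$, but carried out locally for each $i=1,\ldots,n$ and then reassembled in the final sum \eqref{DfnOptCLoc}. Analogously to the quantities in \eqref{e:oracle(y)}--\eqref{def:C_tilde_y}, for each $i$ and auxiliary sequence $y=y_n\to 0$, I would introduce the localized oracle spot-volatility quantity
\begin{equation*}
\mathscr c_n(i;y):=\frac{1}{h_nk_n}\sum_{\ell \in \mathcal I_n(i;y)}(\Delta_\ell^n X)^2,\qquad \mathcal I_n(i;y):= \{\ell: i-k_n/2<\ell\leq i+k_n/2\}\cap \mathcal I_n(y),
\end{equation*}
and an oracle iterative scheme $\widetilde c_{n,j}(i;y)$ initialized at $\widetilde c_{n,0}(i;y):=\widehat c_{n,0}(i)$ and updated by thresholding only the increments in $\mathcal I_n(i;y)$ with level $\sqrt{r_n^*\widetilde c_{n,j-1}(i;y)}$. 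As in the uniform case, the existence of the stabilizing index $j_n^*$ in \eqref{e:stablizing_jnstar} follows from the monotone/finiteness structure of the maps $\xi\mapsto \widehat\sigma_n^2(i;\sqrt{r_n^*\xi})$: an increment can only be excluded (not re-included) as $\xi$ decreases, so the procedure terminates in at most $k_n$ steps for each $i$.

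Next I would establish, for each $i$, a pointwise sandwich analogous to the one used for $\widehat C_n$,
\begin{equation*}
\widetilde c_{n,k_n+1}(i;y_n) \leq \widehat c_{n,j_n^*}(i) \leq \mathscr c_n(i;y_n)+R_n(i),
\end{equation*}
where $R_n(i)$ is a remainder controllable uniformly in $i$ on the event that no ``large" jumps of size $\geq y_n$ occur within the $i$-th window. I would then prove the spot analog of Proposition \ref{prop:allequal}, asserting that $\max_{1\leq i\leq n}\max_{1\leq j\leq k_n+1}|\widetilde c_{n,j}(i;y_n)-\mathscr c_n(i;y_n)|$ vanishes at a sufficient rate. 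Combining these yields, uniformly in $i$, that $\widehat c_{n,j_n^*}(i)$ is asymptotically equivalent to $\mathscr c_n(i;y_n)$, so the analysis of $\widehat C^*_n$ reduces to that of the jump-free oracle TRV with local thresholds $B^{\text{orc}}_n(i):=\sqrt{r_n^*\,\mathscr c_n(i;y_n)}$. Through Assumption \ref{assump:Coef0} and the existing theory for kernel-based spot volatility estimators \citep{figueroa-lopez:li:2020,figueroa-lopez:wu:2022}, these thresholds are in turn uniformly close to the deterministic-like local thresholds $\sqrt{r_n^*\,\sigma_{t_i}^2}$.

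From here, parts (i)--(iii) follow along the lines of the corresponding statements in Theorem \ref{thm:consistency_clt_con}. For (i), condition \eqref{e:rstar_rate} ensures the local thresholds dominate the Gaussian modulus of continuity uniformly in $i$, while the upper bound $r_n^*\ll (h\log(1/h))^{\alpha/2}$ controls the bias contribution from the stable-like small jumps via the $\alpha$-power integrals $\int_{|x|\leq \sqrt{r_n^*\sigma^2}}x^2\nu(dx)$. For (ii), I would decompose $\widehat C^*_n - C_T$ via the oracle substitution and show that, under the sharper rate assumption, the jump-related bias vanishes faster than $\sqrt{h_n}$; stable convergence then reduces to the standard $\mathcal F$-conditional CLT for $\sum (\sigma_{t_{i-1}}\Delta_i^n W)^2-\sigma_{t_{i-1}}^2 h_n$ using independence of $W$ and $L$. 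For (iii), when $\alpha\in(1,2)$ and $\gamma_t\equiv 1$, the dominant small-jump bias produces a non-negligible term scaling like $h_n^{1-\alpha/2}(r_n^*)^{1-\alpha/2}\gg \sqrt{h_n}$ with deterministic sign, yielding divergence exactly as in the uniform case; the local structure of the threshold does not alter this scaling because the L\'evy intensity is time-homogeneous.

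The principal obstacle is obtaining the \emph{uniform-in-$i$} control in the sandwich and oracle-equivalence steps. Pointwise arguments on a single window are essentially those of the uniform-threshold proof, but here the maximum over $n$ overlapping windows must remain negligible. This requires handling $n$ dependent local estimators simultaneously and, in particular, uniformly excluding windows that straddle a ``large" jump. I would handle this by working on an event of the form $A_n=\bigcap_i \{|\mathcal I_n(i;y_n)^c|\leq 1\}$, which has probability tending to one by standard Poisson estimates provided $k_n h_n \lambda(y_n)\to 0$, together with maximal-type inequalities on $k_n$-term partial sums of jump-free increments, for which the c\`adl\`ag regularity of $\sigma$ and the moment bounds in Assumption \ref{assump:Coef0}(iii) apply. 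The rate constraint $n^a\ll k_n\ll n$ is what makes both the spot-volatility bias/variance tradeoff and this uniform-maximum bound compatible.
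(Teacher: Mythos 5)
Your overall architecture is the right one and matches the paper's: compare the iterates to oracle quantities built from the jump-free increments, sandwich the estimator between an oracle lower bound and the oracle plus a remainder, show the oracle iterates coincide with the oracle itself with probability tending to one, and then read off (i)--(iii) from the asymptotics of $\mathscr C_n(y_n)$ (Proposition \ref{p:prop2}). However, your device for the uniform-in-$i$ control is not viable. You propose to work on $A_n=\bigcap_i\{|\mathcal I_n(i;y_n)^c|\leq 1\}$ and claim $\bP(A_n)\to 1$ provided $k_nh_n\lambda(y_n)\to 0$. But the admissible choices of $y_n$ force $\lambda(y_n)\sim Ky_n^{-\alpha}$ with $y_n\ll(h_n\log n)^{1/2}$ (needed so that the drift/small-jump perturbation $\max_i|\Delta_i^n\chi(y_n)|=o_P(y_n)$ does not push jump-free increments over the threshold) and $y_n^{\alpha}\gg r_n^*\gtrsim h_n\log n$ (needed to kill the remainder), so the expected number of ``large'' jumps per window is $\lambda(y_n)k_nh_n$, which for $k_n$ anywhere near the upper end $k_n\ll n$ (and in the borderline regime even for $k_n=n^a$) tends to infinity; e.g.\ $y_n\gg(h_n\log n)^{1/\alpha}$ only gives $\lambda(y_n)k_nh_n\ll k_n/\log n\to\infty$. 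So $\bP(A_n)\to 0$ rather than $1$, and the step ``uniformly excluding windows that straddle a large jump'' collapses. Relatedly, your target of showing the local thresholds are uniformly close to $\sqrt{r_n^*\,\sigma_{t_i}^2}$ is both stronger than needed and in the wrong form: since $\sigma$ is only c\`adl\`ag, the relevant comparison is to $\sup_{t}\sigma_t^2$ over the window, and only a one-sided (lower) bound is required.

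What the paper does instead, and what you would need to substitute: it never requires windows to be (nearly) jump-free. The oracle local estimates $\mathscr c_n(i;y)$ simply omit the jump increments, however many there are, and Proposition \ref{p:prop3} establishes the uniform \emph{lower} bound $\max_i\sup_{t\in[\frac{i-k_n/2}{n},\frac{i+k_n/2}{n})}\sigma_t^2/\underline c_n(i)\toprob 1$ (via a Hoeffding/union bound over the $n$ windows, which is where $k_n\gg n^a$, or at least $k_n\gg(\log n)^3$, enters). Combined with Lemma \ref{l:mod_of_cont} — whose normalization by the local $\sup\sigma^2$ is exactly what makes the comparison work window by window — this yields \eqref{Eq1c}--\eqref{Eq1d}, i.e.\ no jump-free increment is ever excluded by the initial or oracle local thresholds, whence $\widetilde C^*_{n,j}(y)=\mathscr C_n(y)$ for all $j$ w.h.p. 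The surviving-jump contribution is then controlled \emph{globally}, not per window: $R_n^*\leq (N_1(y)+N_1')\max_i(B_n^*(i))^2=O_P(y^{-\alpha}r_n^*)+O_P(r_n^*)$, using $\max_i\widehat c_{n,n+1}(i)=O_P(1)$, and this is $o_P(1)$ (resp.\ $o_P(n^{-1/2})$) under the stated upper bounds on $r_n^*$. Unless you replace the $A_n$-based argument with a uniform lower-bound argument of this type, the proof does not go through.
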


\begin{remark}\label{CmntInitCa2}
 Similarly to the case for uniform thresholding, the assumptions on the admissible initial estimates \eqref{e:def_little_C0} are relatively mild and require consistency in a uniform sense only in the continuous case. In particular, a localized version of realized variance, namely $\widehat c_{n,0}(i;X)=\frac{1}{h_nk_n} \sum_{\ell=i-k_n/2+1}^{i +k_n/2}  \big(\Delta_\ell^n X\big)^2$, or a localized bipower variation, namely $\widehat c_{n,0}(i;X)=\frac{ 1}{h_nk_n}\frac{\pi}{2}\sum_{\ell=i-k_n/2+1}^{i +k_n/2}  |\Delta_\ell^n X||\Delta_{\ell+1}^n X|$ both satisfy these assumptions. Indeed, the validity of \eqref{e:little_c0_consistent_contcase} for the localized realized variance is shown in the proof of Lemma \ref{l:untruncated_strong_consistency} (see \eqref{e:needonlytoshow}), while for the localized bipower variation it follows along the same arguments as the proof of Proposition 3.3 in \cite{palmes:woerner:2016}.%
\end{remark}

Statistical errors for spot volatility estimation are known to be substantially larger by comparison to the $O_P(n^{-1/2})$--sized errors that occur in estimation of integrated volatility $C_T$ (for instance, optimal choices of $k_n$ in spot volatility estimation lead to errors of order $n^{-1/4}$; see, e.g., \cite{figueroa-lopez:wu:2022,jacod:protter:2011}). Interestingly enough, the estimator $\widehat C^{*}_n$ utilizes the comparatively noisier estimates of spot volatility in an auxiliary manner to lead to potentially improved estimates of $C_T$.%

\begin{remark} Recall that, to a first-order approximation, the threshold \eqref{e:estar} is MSE-optimal under the assumption of constant volatility and finite jump activity.  Though this is an idealized assumption and not expected to hold in many practical settings, it can serve as a reasonable local approximation at sufficiently high sampling frequencies. This intuition provides further support for our estimator $\widehat C_n^{*}$, which roughly incorporates the optimal-type threshold \eqref{e:estar} in a local manner.  Indeed, our simulation study in Section \ref{s:montecarlo} reflects this, showing that the resulting localized estimators exhibit superior finite-sample performance. Furthermore, we conjecture that for TRV based on time-varying thresholds $\varepsilon(i)$, $i=1\ldots,n$,  the threshold choice
 $$
 \varepsilon_n(i)=\sqrt{2\overline \sigma^2_{n,i}h_n \log(1/h_n)},
 $$
 with $\overline \sigma^2_{n,i} =\sup_{t\in[\frac{i-1}{n},\frac{i}{n})}\sigma^2_t$, 
is MSE-optimal up to first order approximation under finite jump activity.
\end{remark}
\begin{remark} Feasible CLTs (for construction of confidence intervals) are possible with either $\widehat C_n$ or $\widehat C_n^*$.   Indeed, minor extensions to our arguments show that under the rates $r_n,r_n^*$ in part (ii) of Theorems \ref{thm:consistency_clt_con} and \ref{thm:consistency_clt_2}, one has
$$
\frac{\sqrt{n}\left(\widehat C_n - C_T\right)}{\sqrt{2 \sum_{i=1}^n (\Delta_i^n X)^4 {\bf 1}_{\{|\Delta_i^n X| \leq B_n\}}}}\stackrel{d} \to \mathcal N(0,1),~~\text{and}~~
\frac{\sqrt{n}\left(\widehat C^*_n - C_T\right)}{\sqrt{2 \sum_{i=1}^n (\Delta_i^n X)^4 {\bf 1}_{\{|\Delta_i^n X| \leq B^*_n(i)\}}}}\stackrel{d} \to \mathcal N(0,1),
$$
where the thresholds $B_n$ and $B^*_n(i)$ are the same as for $\widehat C_n$ and for $\widehat C_n^*$ as described in Theorems \ref{thm:consistency_clt_con} and \ref{thm:consistency_clt_2}, respectively.
\end{remark}
\begin{remark} Strictly speaking, the estimators $\widehat C_n$ and $\widehat C_n^*$ are not ``tuning-free'' in the same way as, e.g., bipower variation is: $\widehat C_n$ depends on $r_n$ and $\widehat C_n^*$ depends on $r_n^*$ and $k_n$, which must still be chosen by the practitioner.  However, the auxiliary sequences $r_n,r_n^*$ and $k_n$ are relatively data-insensitive and can be chosen based on sampling frequency alone.  This stands in contrast with selecting the (full) parameter $\varepsilon$ itself, which is highly data-sensitive, as demonstrated in Figure \ref{fig1}.  This is further illustrated in the next section.
\end{remark}
\section{Monte Carlo study}\label{s:montecarlo}

In this section, we compare the finite-sample performance of $\widehat C_n$ and $\widehat C_n^*$ against standard tuning approaches for TRV in the literature based on simulated data from the following stochastic volatility model:
\begin{equation*}
\begin{aligned}
X_{t}&=1+\int_{0}^{t}\sigma_s\,dW_{s}+L_{t} +J_t, \\
\sigma^2_{t}&=\theta+\int_{0}^{t}\kappa\big(\theta-\sigma^2_s\big)\,ds+\xi\int_{0}^{t}\sigma_s\,dB_{s}.%
\end{aligned}%
\end{equation*}
Above,  $W$ and $B$ are two correlated standard Brownian motions with covariation $d\langle W, B\rangle_t=\rho dt$, $L$ is a CGMY L\'{e}vy process independent of $W$ and $B$, and $J$ is an inhomogeneous compound Poisson process independent of all other processes with intensity $\{\lambda(t)\}_{t\geq{}0}$ and jump distribution $\varrho(dx)$.

Based on a 6.5 hour trading day and 252 trading days per year, we consider time horizons of $T\in\{\frac{1}{252}, \frac{5}{252},\frac{1}{12}\}$, corresponding to 1 day, 1 week, and 1 month, respectively, at the 5-minute ($h_n=(\frac{1}{252})(\frac{1}{6.5})(\frac{5}{60})$)  sampling frequency.  For illustration, we examine five separate scenarios we now describe.
  Unless otherwise stated, for ease of comparison the parameters for  $\{\sigma_t^2\}_{t\geq{}0}$  are set as:
\begin{align*}
\kappa=5,\quad\xi=0.3,\quad\theta=(0.2)^2, \quad {\rho = -0.5}.
\end{align*}
With these parameter choices, the annualized expected integrated variance is $(1/T)\bE C_T = (0.2)^2$, and, in all settings, parameters are chosen so that the expected annualized realized volatility is approximately $\sqrt{(1/T)\bE (\text{RV}_n)}\approx 0.275$ (Models 1,2,4,5, below) or  $0.3$ (Model 3), which are realistic for financial data.
\begin{itemize}
\item Model 1 (homogeneous jumps):
for the infinite activity component $\{L_t\}_{t\geq{}0}$, we choose
\begin{align*}%
C_-=0.148,\quad C_+ = 0.033, \quad G=3.295,\quad M=4.685, \quad Y=0.917.
\end{align*}
The parameters $C_-,C_+,G,M$ are taken from estimates in \cite{kawai:2010} for a 1-year interval based on calibration from index options; here $Y=0.917$ corresponds to the average of the reported estimates of $Y^+$ and $Y^-$ in their model, namely  $Y=\frac{Y^++Y^-}{2}$. For  $\{J_t\}_{t\geq{}0}$, we choose%
$$\lambda(t)\equiv   252~\text{(1 jump per day)},\quad  \varrho(dx)\sim \mathcal N(-0.005, 0.01^2).$$

\item Model 2 (switching jump intensity): 
all settings are the same as in Model 1, except  $\{J_t\}_{t\geq{}0}$ has time-varying intensity: 
$$\lambda(t)=  (252)\vartheta(t),\quad \vartheta(t)=\begin{cases}
2,&\sigma^2_t\geq\theta,\\
0,& \sigma^2_t<\theta.
\end{cases}$$
\item Model 3 (higher jump intensity): all settings are the same as Model 1, except $\{J_t\}_{t\geq{}0}$ has a higher jump intensity:
$$\lambda(t)\equiv   (1.5)(252)~\text{(1.5 jumps per day)}.$$
\item Model 4 (finite jump activity): all settings are the same as Model 1, except we take  $L_t\equiv 0$, and adjust $\lambda(t)\equiv   (1.15)252$ to match the expected realized variance of Model 1.\\
\item Model 5 (no jumps): All settings are the same as Model 1, except we take $J_t\equiv 0$, $L_t\equiv 0$, and adjust $\theta=(0.275)^2$ to match the expected realized variance of Model 1.
\end{itemize}
We compare 6 types of estimators based on TRV: two instances of standard approaches, and two instances each of the iterative estimator $\widehat C_n$ and of the localized iterative estimator $\widehat C^*_n$ with different types of initializations.  Specifically, for $\widehat C_n$ we use the following estimators as initializations: %
$$
\text{RV}_n = \sum_{i=1}^n(\Delta_i^nX)^2,\qquad \text{BV}_n= \frac{\pi}{2}\sum_{i=2}^n |\Delta_{i-1}^n X| |\Delta_{i}^n X|.
$$
For $\widehat C_n^*$, we use their localized counterparts, denoted by
$$
\hat \sigma_n^2(\ell) = \frac{1}{h_nk_n}\sum_{i=\ell-k_n/2+1}^{\ell+k_n/2}(\Delta_i^n X)^2,\quad \text{BV}^{\text{spot}}_n(\ell)= \frac{1}{h_nk_n}\frac{\pi}{2}\sum_{i=\ell-k_n/2+1}^{\ell+k_n/2} |\Delta_{i-1}^n X| |\Delta_{i}^n X|.
$$
We consider the following estimation procedures:
\begin{enumerate}%
\item[(1)] $\text{TRV}_n(\varepsilon_{0,n})$, where $\varepsilon_{0,n}= h^{0.49}_n$;
\item[(2)] $\text{TRV}_n(\varepsilon_{1,n})$, where $\varepsilon_{1,n}= \sqrt{\text{BV}_n r_n}$, with $\sqrt{r_n}=4 h^{0.49}_n$ as used in \cite{jacod:todorov:2014,li:todorov:tauchen:2017,ding:li:liu:zheng:2023}.
\item[(3)] $\widehat C_n$, with initialization   $\widehat{C}_{n,0}=\text{RV}_n$ and $r_n$ as in (2);
\item[(4)]  $\widehat C_n$, with initialization $\widehat{C}_{n,0}=\text{BV}_n$ and $r_n$ as in item (2);
\item[(5a,b)] $\widehat C_n^{*}$, with  $r_n^*=2h_n(\log (1/h_n) -\log\log (1/h_n))$, $k_n=h^{-0.5}_n$ (5a) or $k_n=h^{-0.6}_n$ (5b), and initialization $\widehat c_{n,0}(i)=\hat \sigma^2_n(i)$;
\item[(6a,b)] $\widehat C_n^{*}$, with $r_n^*=2h_n(\log (1/h_n) -\log\log (1/h_n))$, $k_n=h^{-0.5}_n$ (6a) or $k_n=h^{-0.6}_n$ (6b),  and initialization $\widehat c_{n,0}(i)=\text{BV}^{\text{spot}}_n(i)$.
\end{enumerate}

We  simulate  $m=5000$ paths for each Model 1-5.  Denoting by $\widehat{\mathcal C}$ one of the estimators in (1)-(6), on the $j$--th realization we compute the estimator value, $\widehat{\mathcal C}_j$, the corresponding true integrated volatility, $C_{T,j}$, and report  
\begin{itemize}
\item The mean relative error  (in \%): $100(\frac{1}{m}\sum_{j=1}^m e_j$), where $e_j=\frac{\widehat{\mathcal C}_j-C_{T,j}}{C_{T,j}}$;%
\item The standard deviation of the relative error (in \%): $100\sqrt{\frac{1}{m}\sum_{j=1}^m (e_j-\overline e)^2}$;
\item $\sqrt{\text{MSE}} = \sqrt{\frac{1}{m}\sum_{j=1}^m (\widehat{\mathcal C}_j - C_{T,j})^2}$.
\end{itemize}
The results are displayed in Tables \ref{t:mod123}-\ref{t:mod4}; the smallest bias and MSE for each time horizon are shown in bold.
\begin{table}[h!]
\centering
\scalebox{0.89}{
\begin{footnotesize}
\begin{tabular}{c||ccc|ccc|ccc}
\midrule
\multicolumn{10}{c}{Model 1 (homogeneous jumps)}\\\toprule
  & \multicolumn{3}{c|}{$T=1/252$ (1 day)}   & \multicolumn{3}{c|}{$T=5/252$ (1 wk.)} & \multicolumn{3}{c}{$T=1/12$ (1 mo.)} \\\cline{2-10}
&rel.\,err (\%)&sd(rel.\,err)&\scalebox{0.8}{$\sqrt{\text{MSE}}$}\scalebox{0.7}{$\times 10^4$}&rel.\,err (\%)&sd(rel.\,err)&\scalebox{0.8}{$\sqrt{\text{MSE}}$}\scalebox{0.7}{$\times 10^4$}&rel.\,err (\%)&sd(rel.\,err)&\scalebox{0.8}{$\sqrt{\text{MSE}}$}\scalebox{0.7}{$\times 10^4$}\\\midrule
(1) TRV&90.4783&149.1879&2.7355&90.2462&67.2049&8.7914&92.9712&39.0118&31.4622\\
(2) TRV&5.3009&20.8770&0.3421&4.8734&8.7488&0.8040&4.7463&4.4099&2.2017\\
(3) \scalebox{0.8}{$\widehat C_n$}&5.8617&22.3127&0.3665&3.9965&8.4482&0.7505&3.6931&4.2304&1.9256\\
(4) \scalebox{0.8}{$\widehat C_n$}&4.7353&20.1997&0.3301&3.9688&8.4253&0.7478&3.6931&4.2304&1.9256\\
(5a) \scalebox{0.8}{$\widehat C^*_n$}&3.3682&19.0994&0.3087&2.9690&8.1442&0.6956&2.8222&4.1195&1.7050\\
(5b) \scalebox{0.8}{$\widehat C_n^*$}&3.3297&19.0585&0.3079&2.5464&8.0069&0.6740&2.3890&4.0001&1.5881\\
(6a) \scalebox{0.8}{$\widehat C^*_n$}&2.8296&18.5914&0.2994&2.7570&8.0723&0.6843&2.6449&4.0711&1.6546\\
(6b) \scalebox{0.8}{$\widehat C_n^*$}&\textbf{2.8294}&18.5915&\textbf{0.2989}&\textbf{2.4985}&8.1560&\textbf{0.6714}&\textbf{2.3621}&4.2415&\textbf{1.5815}\\\toprule
\end{tabular}
\end{footnotesize}
}\\
\scalebox{0.89}{
\begin{footnotesize}
\begin{tabular}{c||ccc|ccc|ccc}
\midrule
\multicolumn{10}{c}{Model 2 (switching jump intensity)}\\\toprule
  & \multicolumn{3}{c|}{$T=1/252$ (1 day)}   & \multicolumn{3}{c|}{$T=5/252$ (1 wk.)} & \multicolumn{3}{c}{$T=1/12$ (1 mo.)} \\\cline{2-10}
&rel.\,err (\%)&sd(rel.\,err)&\scalebox{0.8}{$\sqrt{\text{MSE}}$}\scalebox{0.7}{$\times 10^4$}&rel.\,err (\%)&sd(rel.\,err)&\scalebox{0.8}{$\sqrt{\text{MSE}}$}\scalebox{0.7}{$\times 10^4$}&rel.\,err (\%)&sd(rel.\,err)&\scalebox{0.8}{$\sqrt{\text{MSE}}$}\scalebox{0.7}{$\times 10^4$}\\\midrule
(1) TRV&98.4132&190.3505&3.5307&89.9458&95.9290&11.3156&83.7461&62.7170&40.2647\\
(2) TRV&6.4172&23.0589&0.3911&5.1908&10.1623&0.9698&4.8893&6.0500&3.0746\\
(3) \scalebox{0.8}{$\widehat C_n$}&6.6465&29.6733&0.5108&3.7809&9.1587&0.8318&3.4781&5.0420&2.3673\\
(4) \scalebox{0.8}{$\widehat C_n$}&5.1206&20.8151&0.3475&3.7568&9.1281&0.8278&3.4772&5.0429&2.3673\\
(5a) \scalebox{0.8}{$\widehat C^*_n$}&3.6255&19.4813&0.3193&2.8709&8.6546&0.7561&3.0375&4.7020&2.1106\\
(5b) \scalebox{0.8}{$\widehat C_n^*$}&3.6256&19.5659&0.3207&2.2625&8.3286&0.7137&2.4621&4.4255&1.8864\\
(6a) \scalebox{0.8}{$\widehat C^*_n$}&3.1309&18.7966&\textbf{0.3062}&2.6498&8.5198&0.7385&2.8564&4.6173&2.0377\\
(6b) \scalebox{0.8}{$\widehat C_n^*$}&\textbf{3.1190}&18.7985&0.3063&\textbf{2.2198}&8.6419&\textbf{0.7114}&\textbf{2.4332}&4.8536&\textbf{1.8801}\\\toprule
\end{tabular}
\end{footnotesize}
}\\
\scalebox{0.89}{
\begin{footnotesize}
\begin{tabular}{l||ccc|ccc|ccc}
\midrule
\multicolumn{10}{c}{Model 3 (higher jump intensity)}\\\toprule
  & \multicolumn{3}{c|}{$T=1/252$ (1 day)}   & \multicolumn{3}{c|}{$T=5/252$ (1 wk.)} & \multicolumn{3}{c}{$T=1/12$ (1 mo.)} \\\cline{2-10}
&rel.\,err (\%)&sd(rel.\,err)&\scalebox{0.8}{$\sqrt{\text{MSE}}$}\scalebox{0.7}{$\times 10^4$}&rel.\,err (\%)&sd(rel.\,err)&\scalebox{0.8}{$\sqrt{\text{MSE}}$}\scalebox{0.7}{$\times 10^4$}&rel.\,err (\%)&sd(rel.\,err)&\scalebox{0.8}{$\sqrt{\text{MSE}}$}\scalebox{0.7}{$\times 10^4$}\\\midrule
(1) TRV&121.7856&167.8171&3.2809&133.4801&81.8533&12.1529&134.3111&49.2704&44.6016\\
(2) TRV&7.8602&24.2170&0.4053&7.3329&10.1078&0.9921&7.1037&4.8398&2.9336\\
(3) \scalebox{0.8}{$\widehat C_n$}&9.1251&36.1741&0.5982&5.2370&9.2686&0.8526&4.8782&4.5093&2.3053\\
(4) \scalebox{0.8}{$\widehat C_n$}&6.3337&22.0729&0.3656&5.2050&9.2380&0.8490&4.8782&4.5093&2.3053\\
(5a) \scalebox{0.8}{$\widehat C^*_n$}&4.0298&20.3463&0.3307&3.7660&8.7422&0.7627&3.6815&4.2953&1.9582\\
(5b) \scalebox{0.8}{$\widehat C_n^*$}&4.0138&20.2726&0.3295&3.0898&8.4432&0.7199&3.0808&4.1801&1.7936\\
(6a) \scalebox{0.8}{$\widehat C^*_n$}&\textbf{3.3382}&19.4716&0.3145&3.5166&8.6317&0.7458&3.4983&4.2390&1.8997\\
(6b) \scalebox{0.8}{$\widehat C_n^*$}&3.5848&19.5251&\textbf{0.3140}&\textbf{3.0404}&8.6777&\textbf{0.7159}&\textbf{3.0138}&4.5086&\textbf{1.7595}\\\toprule
\end{tabular}
\end{footnotesize}}%
\caption{ \label{t:mod123}\footnotesize Estimation performance of $\widehat C_n$, $\widehat C_n^*$, and standard tuning approaches for $\text{TRV}$ in Models 1-3; reported values are based on $m=5000$ realizations in each model at the 5-minute sampling frequency.}
\end{table}

In general, we see that when jumps are present (Models 1-4), both the iterative estimator $\widehat C_n$ and localized iterative estimator $\widehat C_n^*$ can outperform the standard tuning choice (2) for $\text{TRV}$ both in terms of relative error and MSE by significant margins, with reductions in bias often by 50\% or more by comparison to (2)  and reductions in $\sqrt{\text{MSE}}$ as high as 40\%. As anticipated, deterministic tuning (1) performs rather poorly by comparison to approaches (2)-(6) on all time horizons, and although the (non-iterative) bipower-tuned TRV in (2) leads to a substantial improvement over (1), it is uniformly outperformed by (4)-(6) on all time horizons considered and also outperformed by (3) except on daily time horizons.

 In general, the localized estimators (6a,6b) have the largest relative performance gains compared to standard procedures (1)-(2) over longer time horizons, which is somewhat expected, ranging from 13\%-23\% reduction in $\sqrt{\text{MSE}}$ at daily horizons to 28\%-40\% reduction in $\sqrt{\text{MSE}}$ at monthly horizons compared to (2).  Also, iterative approaches with jump-robust initializations (4,6a,6b) generally have improved performance compared to those  without jump-robust initializations (3,5a,5b).  Furthermore, for the localized estimators, the choice $k_n=h_n^{-0.6}$ (5b,6b) tends to lead to improvement relative to the choice $k_n=h_n^{-0.5}$ (5a,6a)  over longer time horizons. The best performance in terms of both relative error and MSE is typically achieved by (6b).

 \begin{table}[h!]
\centering
\scalebox{0.89}{
\begin{footnotesize}
\begin{tabular}{l||ccc|ccc|ccc}
\midrule
\multicolumn{10}{c}{Model 4 (finite jump activity)}\\\toprule
  & \multicolumn{3}{c|}{$T=1/252$ (1 day)}   & \multicolumn{3}{c|}{$T=5/252$ (1 wk.)} & \multicolumn{3}{c}{$T=1/12$ (1 mo.)} \\\cline{2-10}
&rel.\,err (\%)&sd(rel.\,err)&\scalebox{0.8}{$\sqrt{\text{MSE}}$}\scalebox{0.7}{$\times 10^4$}&rel.\,err (\%)&sd(rel.\,err)&\scalebox{0.8}{$\sqrt{\text{MSE}}$}\scalebox{0.7}{$\times 10^4$}&rel.\,err (\%)&sd(rel.\,err)&\scalebox{0.8}{$\sqrt{\text{MSE}}$}\scalebox{0.7}{$\times 10^4$}\\\midrule
(1) TRV&95.5145&159.3735&2.9314&89.8228&66.1216&8.6889&93.9742&40.3833&31.7663\\
(2) TRV&4.3921&21.5406&0.3515&3.5605&8.7916&0.7641&3.5693&4.2456&1.9309\\
(3) \scalebox{0.8}{$\widehat C_n$}&4.9183&23.2150&0.3783&2.6648&8.4874&0.7164&2.5184&4.1242&1.6911\\
(4) \scalebox{0.8}{$\widehat C_n$}&3.5558&20.1041&0.3255&2.6388&8.4716&0.7145&2.5179&4.1244&1.6911\\
(5a) \scalebox{0.8}{$\widehat C^*_n$}&1.9505&18.3226&0.2933&1.5714&8.0859&0.6617&1.7152&3.9780&1.5053\\
(5b) \scalebox{0.8}{$\widehat C_n^*$}&1.9454&18.2932&0.2928&1.1940&7.9727&0.6475&1.2999&3.8990&1.4207\\
(6a) \scalebox{0.8}{$\widehat C^*_n$}&1.5691&18.0673&0.2886&1.3933&8.0169&0.6527&1.5442&3.9375&1.4680\\
(6b) \scalebox{0.8}{$\widehat C_n^*$}&\textbf{1.5623}&18.0679&\textbf{0.2880}&\textbf{1.1613}&8.0538&\textbf{0.6452}&\textbf{1.2761}&4.0324&\textbf{1.4166}\\\toprule
\end{tabular}
\end{footnotesize}
}
\scalebox{0.89}{
\begin{footnotesize}
\begin{tabular}{l||ccc|ccc|ccc}
\midrule
\multicolumn{10}{c}{Model 5 (no jumps)}\\\toprule
  & \multicolumn{3}{c|}{$T=1/252$ (1 day)}   & \multicolumn{3}{c|}{$T=5/252$ (1 wk.)} & \multicolumn{3}{c}{$T=1/12$ (1 mo.)} \\\cline{2-10}
&rel.\,err (\%)&sd(rel.\,err)&\scalebox{0.8}{$\sqrt{\text{MSE}}$}\scalebox{0.7}{$\times 10^4$}&rel.\,err (\%)&sd(rel.\,err)&\scalebox{0.8}{$\sqrt{\text{MSE}}$}\scalebox{0.7}{$\times 10^4$}&rel.\,err (\%)&sd(rel.\,err)&\scalebox{0.8}{$\sqrt{\text{MSE}}$}\scalebox{0.7}{$\times 10^4$}\\\midrule
(1) TRV&0.0917&15.8450&0.5671&\textbf{-0.1081}&7.0844&\textbf{1.2695}&\textbf{-0.0536}&3.4524&\textbf{2.6256}\\
(2) TRV&0.0504&15.8488&0.5671&-0.1308&7.0946&1.2717&-0.0936&3.4619&2.6346\\
(3) \scalebox{0.8}{$\widehat C_n$}&0.0857&15.8440&\textbf{0.5670}&-0.1274&7.0909&1.2708&-0.0930&3.4617&2.6343\\
(4) \scalebox{0.8}{$\widehat C_n$}&0.0632&15.8549&0.5673&-0.1311&7.0941&1.2715&-0.0940&3.4616&2.6344\\
(5a) \scalebox{0.8}{$\widehat C^*_n$}&\textbf{0.0306}&15.8651&0.5677&-0.1920&7.1099&1.2744&-0.1748&3.4660&2.6394\\
(5b) \scalebox{0.8}{$\widehat C_n^*$}&0.0344&15.8649&0.5677&-0.2343&7.1112&1.2750&-0.2063&3.4639&2.6388\\
(6a) \scalebox{0.8}{$\widehat C^*_n$}&-0.0636&15.8999&0.5689&-0.2669&7.1294&1.2782&-0.2492&3.4651&2.6424\\
(6b) \scalebox{0.8}{$\widehat C_n^*$}&-0.0668&15.8999&0.5691&-0.2696&7.1293&1.2766&-0.2432&3.4656&2.6425\\\toprule
\end{tabular}
\end{footnotesize}
}
\caption{\label{t:mod4} \footnotesize Estimation performance of $\widehat C_n$, $\widehat C_n^*$, and standard tuning approaches for $\text{TRV}$ in the finite jump activity setting of Model 4 and the jump-free setting of Model 5; reported values are based on $m=5000$ realizations at the 5-minute sampling frequency.}
\end{table}

 Comparing performance across Models 1-4, we see that all  iterative approaches (3)-(6) are generally more robust against increased levels of jump activity as well as time-varying jump behavior compared to (1)-(2).  In both Models 2 and 3,  on longer time horizons, the performance advantage of the localized estimators over uniform approaches is typically larger by comparison to the performance advantage they have over uniform approaches in Model 1. Though all estimators (1)-(6) have better overall performance under finite jump activity (Model 4) relative to settings with infinite activity (Models 1-3), the iterative approaches still retain performance advantages over standard choices (1)-(2) even without an infinite activity component in the model.

Turning to the jump-free case (Model 5), we note that all estimators perform very similarly in terms of both bias and MSE and are typically slightly negatively biased.  Over weekly and monthly time horizons, the localized estimators (5-6) incur a very slight increase in bias (appx. 0.15\%)  compared to uniform thresholding (2), and the deterministic TRV has marginally smaller $\sqrt{\text{MSE}}$ compared to (2)-(6).

We note that although a slight increase in bias occurs in the localized estimators (5)-(6) in the absence of jumps, it is relatively small relative to the potential performance gain one may attain if jumps are present.  Since jumps are generically expected in many types of data, for use in practice we recommend the localized estimator with jump-robust initialization and settings of (6b).  However, if a simpler implementation is desired, or one wants to avoid the potential marginal additional bias when jumps are absent,   method (4) is a reasonable alternative.  We remark that in any case, these choices (4,6b) in the presence of jumps can significantly outperform the common choice in the literature (2).   %

\begin{table}[h!]
\begin{footnotesize}
\begin{tabular}{l | c c c c c c c  }
\midrule
\multicolumn{8}{c}{Number of iterations until stabilization $(T=1/12)$ }\\\toprule
& 1 & 2 & 3 & 4 &5 & 6 &  $\geq 7$\\
\midrule
 $j_n$ (\scalebox{0.9}{$\widehat C_n$}) &1.13\% & 27.21\% &   54.13\% &  15.69\% & 1.65\% &  0.17\% &  0.02\% \\
 $j_n^*$ (\scalebox{0.9}{$\widehat C^*_n$}) & 1.06\% & 27.15\% & 55.78\% &  14.45\% &  1.45\% &  0.10\% & 0.01\%\\\bottomrule
\end{tabular} \vspace{2ex}
\end{footnotesize}
\caption{\label{t:counts} \footnotesize Empirical distribution of $j_n$ and $j_n^*$ at the $T=1/12$ (1 month) time horizon.  Reported values reflect the empirical percentages of the aggregated counts of iterations until stabilization across all computed values of each estimator (for $\widehat C_n$, across both settings $(3)$ and $(4)$; for $\widehat C_n^*$,  across both (5ab) and (6ab)) and across Models 1-4.  }
\end{table}

Regarding computational considerations, in Table \ref{t:counts} we report the empirical distribution of the number of iterations required for stabilization for both the localized thresholding and uniform thresholding approaches (i.e., $j_n$, as in \eqref{e:def_jn}, and $j_n^*$ as in \eqref{e:stablizing_jnstar})  across all models with jumps (Models 1-4) on 1-month time horizons. Both $\widehat C_n$ and $\widehat C_n^*$  stabilize rather quickly, with roughly 98\% of all estimates stabilizing in 4 or fewer iterations, and the global and local thresholding approaches take roughly the same number of iterations.  Though not included in Table \ref{t:counts}, in the jump-free case (Model 5), all observed instances of estimators stabilized in 3 or fewer iterations, with the vast majority taking 1 or 2; also, shorter time horizons typically required fewer iterations to stabilize in all settings.
 
	   Unreported simulation studies suggest  localized estimators can have further performance gains relative to uniform thresholding approaches when the time horizon is extended or when additional inhomogeneities are incorporated into the model such as volatility jumps.  Generally performance improvement of $\widehat C_n$ and $\widehat C_n^*$ relative to standard-type TRV tuning (1) and (2) becomes more dramatic as the overall proportion of jump variation increases relative to the quadratic variation of $X$, or when the activity of either jump component ($L$ or $J$) is increased, and substantive performance gains are typically observed provided at least one of these components is present. We also remark that at daily horizons, with relatively small sample size ($n=78$)  there is little difference between uniform thresholding (3)-(4) and the localized thresholding (5)-(6), except for the rates $r_n$ and $r_n^*$; not included in this study is a detailed examination of the optimal choice of $k_n$, which could be of future interest, though $k_n = h_n^{-0.6}$ seems to reasonably well in most scenarios.

\appendix
\section{Proofs}\label{Sec:proof}

Throughout the proofs, we often omit the subscript $n$ in $h_n$ and $y_n$, and $K$ denotes a generic constant that may change from line to line.   Without loss of generality, we assume that $T=1$ (giving $h_n=\frac{1}{n}$), and for notational simplicity when dealing with boundary terms we set $\sigma_t:=0$ for $t\notin[0,1]$.  Also, for any process $(Y_t)_{t\in[0,1]}$, we set $\Delta_i^n Y=0$ whenever $i\leq 0$ or $i>n$.

By a standard localization argument, we may assume without loss of generality that $b,\sigma, \sigma^{-1},$ and $\gamma$ are bounded above by a nonrandom constant, and that
$$
\bE( |\gamma_{t+s }-\gamma_{t}|^2|\mathcal F_t)\leq K s,\quad  s,t\in (0,T]. %
$$
 We also collect some useful estimates below that are used throughout the appendix. Based on the decomposition of  the process $L$ in \eqref{eq:L_decompose}, we have the following as  $n\to\infty$ (and thus as $y_n\to 0$, $h_n\to 0$):
 \begin{equation}\label{e:estimates0}
\begin{gathered}
\lambda_n(y)\sim Ky^{-\alpha}, \quad \bP( \Delta_i^n N(y) \neq0) \sim  K h y^{-\alpha},\\
|\Delta_i^nb(y)| \leq K h { (y^{1-\alpha}+1)}.%
\end{gathered}
\end{equation}
In our arguments, we also need oracle  analogs of the estimator $\widehat C^{*}_n$ in (\ref{DfnOptCLoc}).  To construct them, we first define
\begin{align*}%
\widetilde \sigma^2_n(\ell;B,y) &:= \frac{n}{k_n} \sum_{i=\ell-k_n/2+1}^{\ell +k_n/2}  \big(\Delta_i^n X\big)^2{\mathbf 1}_{\{|\Delta_i^n X|\leq B,~\Delta_i^n N(y) = 0,~\Delta_i^n N' = 0\}},\quad \ell=1,\ldots n.
\end{align*}
We then define an auxiliary  sequence $\widetilde c_{n,j}(i;y)$ through a fixed-point iteration for the function $\xi \mapsto \widetilde \sigma^2_n(\ell;\sqrt{r_n^* \xi},y)$ as follows: we set
$$
\widetilde c_{n,1}(i;y):= 
\widetilde \sigma^2_n (i;B^*_{n,0}(i),y),\quad i=1,\ldots,n,%
$$ 
 where $B^*_{n,0}(i)$ is defined as \eqref{itrtiveOptLocEstb} with the initializing threshold constants $\widehat c_{n,0}(i),$ $i=1,\ldots,n$,  being \textit{the same} as those for $\widehat C^*_n$.  Next, we define oracle counterparts of the iterative localized estimates $\widehat c_{n,j}(i)$ in \eqref{itrtiveOptLocEstb}, for every $1 \leq i \leq n$ and $j\geq{}2$ by setting
\begin{align}
\wt B^{*}_{n,j-1}(i;y) &:= \sqrt{r^*_n \widetilde c_{n,j-1}(i;y), \label{e:Bj(i;y)}}\\
\widetilde  c_{n,j}(i;y)&:=\widetilde \sigma^2_{n}\big(i;\wt B^{*}_{n,j-1}(i;y),y\big).\notag 
\end{align}
 We finally define oracle counterparts of the time-varying threshold iterates $\{ \widetilde C^{*}_{n,j}, j\geq 1\}$  by setting %
\begin{equation}\label{e:def_spot_oracle}
 \widetilde C^{*}_{n,j}(y) = \sum_{i\in \mathcal I_n(y)} (\Delta_i^nX)^2 \,{\bf 1}_{\{|\Delta_i^n X|\leq  \wt B^{*}_{n,j}(i;y)\}},\quad j \geq 1,
\end{equation}
 where $\mathcal I_n(y)$ is given in  \eqref{e:I_n(y)}. The above oracle sequence plays an analogous role to the oracle sequence $\widetilde C_{n,j}(y)$  of \eqref{def:C_tilde_y}, but for the case of time-varying thresholds.

We first establish the following key intermediate result, which shows that the iterative sequences $\wt C_{n,j}(y)$, $\wt C^*_{n,j}(y)$ (defined in \eqref{def:C_tilde_y}  and  \eqref{e:def_spot_oracle}, respectively) are asymptotically equivalent to the oracle version of TRV, $\mathscr C_n(y)$, defined in \eqref{e:oracle(y)}, provided $y\to 0$ at an appropriate rate. Below, we use the notation $\overline \sigma^2=\sup_{0\leq s \leq 1}\sigma_s^2$.
\begin{proposition} \label{prop:allequal} 
Assume the initial estimates $\widehat C_{n,0}$ and $\hat c_{n,0}(i)$ belong to the classes $\mathcal C$ and $\mathcal C^\text{spot}$, respectively. Let $y\to 0$ so that, for some  $\delta\in\big(0,(\frac{1}{\alpha} \wedge 1)-\frac12 \big)$,
\begin{gather}\label{cond:ylu2}
    h^{(\frac{1}{\alpha} \wedge 1)-\delta}  \ll y  \ll (h\log n)^{\frac12}.%
\end{gather}
Further suppose that  $n^a\ll k_n \ll n$ for some $0<a<1$, and
\begin{gather}\label{cond:ylu3}
 \liminf_{n\to\infty}  \frac{r_n}{2 h \log n}>\frac{\overline \sigma^2}{\int_0^1\sigma_s^2ds} \quad \textnormal{and} \quad  \liminf_{n\to\infty}\frac{r^*_n}{2 h \log n}>1.%
\end{gather}
Then, with probability tending to 1,
\begin{equation*}%
\mathscr C_n(y) = \wt C_{n,j}(y) =\wt C^*_{n,j}(y),\quad\textnormal{for all}~ j \geq 1.%
\end{equation*}
\end{proposition}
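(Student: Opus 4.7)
The proof strategy is to show, with probability tending to one, that the initial thresholds dominate every increment indexed by $\mathcal I_n(y)$, so no such increment is ever discarded in either oracle iteration; once the base case $\wt C_{n,1}(y)=\wt C^*_{n,1}(y)=\mathscr C_n(y)$ is established, the equalities propagate in $j$. Since
\[
\wt C_{n,j}(y) = \sum_{i\in\mathcal I_n(y)} (\Delta_i^nX)^2 {\bf 1}_{\{|\Delta_i^n X|\leq \wt B_{n,j-1}(y)\}}, \qquad \wt C^*_{n,j}(y) = \sum_{i\in\mathcal I_n(y)} (\Delta_i^nX)^2 {\bf 1}_{\{|\Delta_i^n X|\leq \wt B^*_{n,j}(i;y)\}},
\]
the equality $\wt C_{n,j}(y)=\mathscr C_n(y)$ (respectively $\wt C^*_{n,j}(y)=\mathscr C_n(y)$) holds on the event that the corresponding threshold exceeds the relevant increments.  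Since $\mathscr C_n(y)\toprob\int_0^1\sigma_s^2 ds$ (from $\int_{|x|\leq y}x^2\nu(dx)\lesssim y^{2-\alpha}\to 0$), subsequent thresholds $\wt B_{n,j}(y)=\sqrt{r_nT^{-1}\mathscr C_n(y)}$ are asymptotically no smaller than the initial one, propagating the equality cascadingly to all $j\geq 1$; an analogous uniform-in-$i$ argument handles the localized iteration.

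The central probabilistic estimate is a uniform upper bound on the increments in $\mathcal I_n(y)$.  From \eqref{eq:L_decompose} and the absence of large L\'evy and $J$--jumps on such intervals,
\[
\Delta_i^n X = \Delta_i^n b(y) + \int_{t_{i-1}}^{t_i}\sigma_s dW_s + \int_{t_{i-1}}^{t_i}\gamma_s dM_s(y), \qquad i\in\mathcal I_n(y).
\]
The drift is $O(h(y^{1-\alpha}+1))=o(\sqrt{h\log n})$ under \eqref{cond:ylu2}; BDG combined with a maximal inequality and the tail behavior of $\nu$ from Assumption \ref{assump:Coef0}(ii) shows the small-jump martingale term is uniformly $o_P(\sqrt{h\log n})$ provided $y\gg h^{(1/\alpha)\wedge 1-\delta}$; and standard Gaussian concentration plus a union bound yields
\[
\max_{1\leq i\leq n}\Big|\int_{t_{i-1}}^{t_i}\sigma_s dW_s\Big| \leq \sqrt{2\overline\sigma^2 h\log n}\,(1+o_P(1)).
\]
Combining gives $\max_{i\in\mathcal I_n(y)}|\Delta_i^n X|\leq\sqrt{2\overline\sigma^2 h\log n}(1+o_P(1))$; a local variant with $\overline\sigma^2$ replaced by $\sigma^2_{t_i}$ uniformly in $i$ follows analogously from the c\`adl\`ag regularity of $\sigma$.

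Matching lower bounds on the initial thresholds come from the initializer conditions.  For uniform thresholding, \eqref{e:C0_consistent_cont_case} of class $\mathcal C$ together with \eqref{e:Lipbound} yields $\widehat C_{n,0}\geq\int_0^1\sigma_s^2 ds - o_P(1)$, which combined with the first half of \eqref{cond:ylu3} gives $\wt B_{n,0}(y)\geq\sqrt{2\overline\sigma^2 h\log n}(1+\eta+o_P(1))$ for some $\eta>0$, dominating the increment maximum from the preceding step.  For the localized case, \eqref{e:little_c0_consistent_contcase} together with $n^a\ll k_n\ll n$ and the second half of \eqref{cond:ylu3} gives uniform-in-$i$ control $\widehat c_{n,0}(i)\geq\sigma^2_{t_i}(1-o_P(1))$; the local max bound then ensures $B^*_{n,0}(i)$ dominates increments throughout the $k_n$--window around $i$, so $\widetilde c_{n,1}(i;y)$ coincides with the corresponding local oracle sum and hence approximates $\sigma^2_{t_i}$ uniformly in $i$, yielding $\wt B^*_{n,1}(i;y)\geq\sqrt{2\sigma^2_{t_i} h\log n}(1+\eta+o_P(1))$; the induction for $j\geq 2$ proceeds as in the uniform case.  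The main obstacle will be the uniform-in-$i$ control of the small-jump martingale $\int\gamma dM(y)$, where the interplay between Assumption \ref{assump:Coef0}(ii) and the rate constraint $y\gg h^{(1/\alpha)\wedge 1-\delta}$ must be exploited carefully (especially as $\alpha\uparrow 2$); a secondary difficulty is transferring the class--$\mathcal C^\text{spot}$ consistency \eqref{e:little_c0_consistent_contcase}, stated for the continuous part of $X$, to a uniform-in-$i$ lower bound on $\widehat c_{n,0}(i)$ in the jump case.
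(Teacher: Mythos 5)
Your proposal is correct and follows essentially the same route as the paper: reduce to showing that, with probability tending to one, the initial and oracle-based thresholds dominate every squared increment indexed by $\mathcal I_n(y)$, then propagate the resulting equalities inductively in $j$, using the decomposition of $\Delta_i^n X$ into drift, Brownian, and small-jump parts together with minorants of the initializers supplied by the class conditions. The one caveat is that the bound $\max_i\big|\int_{t_{i-1}}^{t_i}\sigma_u\,dW_u\big|\leq\sqrt{2\overline\sigma^2 h\log n}\,(1+o_P(1))$ is not literally ``standard Gaussian concentration plus a union bound'' (the increments are not Gaussian when $\sigma$ is random and possibly correlated with $W$); the paper proves it, with the sharper local constant $\sup_{t\in[t_{i-1},t_i)}\sigma_t^2$ that is essential for the time-varying case, via the Dambis--Dubins--Schwarz time change combined with the Cs\"org\H{o}--R\'ev\'esz modulus-of-continuity bound for Brownian motion.
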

\begin{proof}
 Recall the notation $\mathcal{I}_n(y)$ as in \eqref{e:I_n(y)}, and let 
 $$\widetilde{\mathbf c}_{n,j}(y)=(\widetilde c_{n,j}(1;y),\ldots,\widetilde c_{n,j}(n;y)).$$
 Also, by analogy to $\mathscr C_n(y)$, define
\begin{align}\label{AnlgDfnc}
\mathscr{c}_n(i;y) = \frac{n}{k_n} \sum_{\ell= i-k_n/2}^{i+k_n/2}\big( \Delta_\ell^n X\big)^2{\bf 1}_{\{\Delta_\ell^n N(y) = 0,\Delta_\ell^n N' = 0\}},\quad i=1,\ldots,n.%
\end{align}
First we claim it suffices to show that, with probability tending to 1,
\begin{align}\label{Eq1a}
&|\Delta_i^n X|^2\leq  r_n \widehat C_{n,0},\text{ for all } i\in \mathcal{I}_n(y),\\
\label{Eq1b}
&|\Delta_i^n X|^2\leq  r_n\mathscr{C}_n(y),\text{ for all } i\in \mathcal{I}_n(y),\\
\label{Eq1c}
&|\Delta_i^n X|^2 \leq r_n^* \widehat c_{n,0}(i),\text{ for all } i\in \mathcal{I}_n(y),\\
\label{Eq1d}
&|\Delta_i^n X|^2\leq  r^*_n \mathscr{c}_n(i;y),\text{ for all } i\in \mathcal{I}_n(y).
\end{align}
Indeed, recalling $B_{n,0}= \sqrt{r_n \widehat C_{n,0}}$ and the definitions of  $\mathscr C_{n}(y)$ and $\widetilde C_{n,1}(y)$ as in \eqref{e:oracle(y)} and \eqref{def:C_tilde_y}, respectively, expression \eqref{Eq1a} implies that $\wt C_{n,1}(y)=\mathscr C_{n}(y)$, while \eqref{Eq1b} implies that $\wt C_{n,j}(y)=\wt C_{n,1}(y)$, for any $j\geq{}2$.
 In a similar fashion, \eqref{Eq1c} implies that $ \widetilde c_{n,1}(i;y)=\mathscr c_{n}(i;y)$ for all $i$. From \eqref{e:Bj(i;y)} this immediately gives $\wt B_{n,1}^{*}(i;y)=\sqrt{r^*_n \mathscr c_{n}(i;y)}$  for all $i$, which, from \eqref{e:def_spot_oracle} and using \eqref{Eq1d}, implies that $\widetilde C^{*}_{n,1}(y)=\mathscr C_n(y)$. Continuing, this gives $\widetilde c_{n,2}(i;y)=\mathscr c_{n}(i;y)$ for all $i$, and thus, $\wt B^*_{n,2}(i;y)=\sqrt{r^*_n \mathscr c_n(i;y)}$ for all $i$; proceeding by induction, we conclude that $\widetilde C^{*}_{n,j}(y)=\mathscr C_n(y)$ and $\widetilde{\mathbf c}_{n,j}(y)=(\mathscr c_{n}(1;y),\ldots,\mathscr c_{n}(n;y))$ for all $j\geq{}1$.

We first establish  \eqref{Eq1a} and \eqref{Eq1b}.  It suffices to show that for some small (nonrandom) $\eta>0$, with probability tending to 1,
\begin{equation}\label{e:P(Omeg_0)to1}
\begin{aligned}
 \frac{\max_{i\in\mathcal I_n (y)}(\Delta_i^n X)^2}{r_n \widehat C_{n,0}}  \leq 1-\eta,\quad \text{and}\quad  \frac{\max_{i\in\mathcal I_n (y)}(\Delta_i^n X)^2}{r_n \mathscr C_n(y)}\leq 1-\eta.
 \end{aligned}
\end{equation}
Note that, for each $i\in \mathcal I_n(y)$, $\Delta_i^n J =0$, and for $H_t(y)$ as in the decomposition \eqref{eq:L_decompose},   we have  $  \int_{t_{i-1}}^{t_i} \gamma_t H_t(y)=0$.  So, denoting $(\sigma\!\cdot\! W)_t = \int_0^t \sigma_s dW_s$, if we define%
\begin{equation}\label{e:delta_i^nX_over_In(y)}
 \Delta_i^n \chi(y):=\int_{t_{i-1}}^{t_i} \gamma_t dM_t(y) + \Delta_i^n b_t(y),
\end{equation}
then we have
$\Delta_i^n X -  \Delta_i^n (\sigma \!\cdot\! W) = \Delta_i^n \chi(y)$ for all $i\in \mathcal I_{n}(y)$.
To show \eqref{e:P(Omeg_0)to1}, note that
\begin{align*}
\max_{i\in \mathcal I_n (y) }(\Delta_i^n X)^2   & \geq  
 \max_{i\in \mathcal I_n (y) }\Big(( \Delta_i^n  (\sigma \!\cdot\! W))^2 -\big|\Delta_i^n \chi(y)\big| ( 2 \big|\Delta_i^n (\sigma \!\cdot\! W)\big | +  \big| \Delta_i^n \chi(y) \big|  ) \Big)\\
&\geq  \max_{i\in \mathcal I_n (y) } ( \Delta_i^n (\sigma \!\cdot\! W))^2 -\max_{i\in \mathcal I_n (y) }\Big\{\big|\Delta_i^n \chi(y)\big|\big( 2 |\Delta_i^n (\sigma \!\cdot\! W) | +  | \Delta_i^n \chi(y) |  \big) \Big\},
\end{align*}
and clearly, 
$$\max_{i\in \mathcal I_n (y) }(\Delta_i^n X)^2  \leq \max_{i\in \mathcal I_n (y) } ( \Delta_i^n (\sigma \!\cdot\! W))^2 +\max_{i\in \mathcal I_n (y) }\Big\{\big|\Delta_i^n \chi(y)\big|\big( 2 |\Delta_i^n (\sigma \!\cdot\! W) | +  | \Delta_i^n \chi(y) |  \big) \Big\}.$$
We first bound $\max_{i\in \mathcal I_n (y) }\big\{\big|\Delta_i^n \chi(y)\big|\big( 2 |\Delta_i^n (\sigma \!\cdot\! W) | +  | \Delta_i^n \chi(y) |  \big) \big\}$.  %
Thus,
\begin{align}
 &h^{-1}\Big|\max_{i\in \mathcal I_n (y) }(\Delta_i^n X)^2 -  \max_{i\in \mathcal I_n (y) }(\Delta_i^n (\sigma\! \cdot\! W))^2\Big|\nonumber\\
 &\leq  h^{-1}\max_{i\in \mathcal I_n (y) }\Big\{\big|\Delta_i^n \chi(y)\big|\Big( 2 |\Delta_i^n (\sigma\! \cdot\! W) | +  | \Delta_i^n \chi(y) |  \Big)\Big\}\nonumber\\
&\quad \leq h^{-1} \max_{1\leq i \leq n} \big|\Delta_i^n \chi(y)\big|\Big( 2 \max_{1\leq i \leq n}|\Delta_i^n (\sigma\! \cdot\! W) | + \max_{1\leq i \leq n} | \Delta_i^n \chi(y) |  \Big) \nonumber\\
& \quad = h^{-1} O_P(y)\left(O\Big( \sqrt{ h\log (1/h)}\Big) +O_P(y) \right)\nonumber\\
& \quad= O_P \Big(y h^{-1/2}(\log n)^{1/2}\Big ) + O_p(h^{-1} y^2 ) \nonumber\\
& \quad=o_P(\log n),\label{eq:maxX_maxW}
 \end{align}
where $\max_{1\leq i \leq n}|\Delta_i^n (\sigma\! \cdot\! W)| =O\big(\sqrt {h\log(1/h)}\big)$ holds as a consequence of Lemma \ref{l:mod_of_cont}, and the last line holds since $y\ll (h\log n)^{1/2}$.%

 We are now in position to show \eqref{Eq1a}--\eqref{Eq1b}.  Let $\widetilde y_n\geq 0$ be a sequence with $h^{(\frac{1}{\alpha} \wedge 1)-\delta}\ll \widetilde y_n\ll h^{\frac12}(\log n)^{\frac12 - \frac1{\delta_0}}$, where $\delta_0$ is as in \eqref{e:Lipbound}.  With  $\underline C_{n}=\underline C_n(X,\widetilde y_n)$ as defined in Proposition \ref{p:prop3_IV}, we then have $\widehat{C}_{n,0}\geq \underline{C}_{n}$ and $\underline{C}_n\stackrel{P}{\longrightarrow}\int_0^{1}\sigma_s^2 ds$.  Furthermore, condition \eqref{cond:ylu3} implies, for some small $\delta'>0$, for large enough $n$, $\frac{r_n}{2h \log n}\geq (1+\delta') \Big(\frac{\overline \sigma^2}{\int_0^1\sigma_s^2ds}\Big)$.  Thus,
\begin{align}
&\frac{\max_{i\in \mathcal I_n (y) }(\Delta_i^n X)^2}{  r_n \wh C_{n,0}}\notag\\
& \leq \bigg(\frac{2}{2(1+\delta')}\bigg)\bigg(\frac{\int_0^1 \sigma^2_s ds}{ \underline{C}_{n}}\bigg) \frac{\max_{i\in \mathcal I_n (y) }(\Delta_i^n X)^2}{ 2\overline \sigma^2 h \log n}\notag\\
& \leq  \bigg(\frac{1}{1+\delta'}\bigg)\bigg(\frac{\int_0^1 \sigma^2_s ds}{ \underline{C}_{n}}\bigg)\Bigg(\frac{\max_{i\in \mathcal I_n (y) } ( \Delta_i^n (\sigma \!\cdot\! W))^2 }{2\overline \sigma^2 h \log n} + \frac{\Big|\max_{i\in \mathcal I_n (y) }(\Delta_i^n X)^2 - \max_{i\in \mathcal I_n (y) } ( \Delta_i^n (\sigma \!\cdot\! W))^2\Big| }{2\overline \sigma^2 h\log n}\Bigg)\notag\\
& = \bigg(\frac{1}{1+\delta'}\bigg)\bigg(1 + o_P(1)\bigg)\bigg( \frac{\max_{i\in \mathcal I_n (y) } ( \Delta_i^n (\sigma \!\cdot\! W))^2 }{2\overline \sigma^2 h \log n}+ o_P(1)\bigg).\label{Eq1TSIn}
\end{align}
Applying Lemma \ref{l:mod_of_cont}, we have, for every $\eta'>0$, $\frac{\max_{i\in \mathcal I_n (y) } ( \Delta_i^n (\sigma \!\cdot\! W))^2 }{2\overline \sigma^2 h \log n}<1+\eta'$ with probability tending to 1. Thus, taking $\eta'>0$ small enough, the right-hand side in \eqref{Eq1TSIn} is strictly less than $1-\eta$  with probability tending to one for some small $\eta>0$, and we obtain the first statement in \eqref{e:P(Omeg_0)to1};  the  statement \eqref{Eq1a} then follows. The second statement in \eqref{e:P(Omeg_0)to1} is proved along the same lines as in \eqref{Eq1TSIn}, replacing both $\wh C_{n,0}$ and $\underline{C}_{n}$ with $\mathscr C_n(y)$ and applying Proposition \ref{p:prop2}-(i), giving \eqref{Eq1b}.

We now show \eqref{Eq1c}-\eqref{Eq1d}. Condition \eqref{cond:ylu3} implies, for some small $\delta'>0$, for large enough $n$, $\frac{r^*_n}{2h \log n}\geq (1+\delta')$.
With $\underline c_{n}(i)=\underline c_{n}(i;X,\widetilde y_n)$ as in Proposition \ref{p:prop3}, we have $\widehat c_{n,0}(i)\geq \underline c_{n}(i)$ and thus
\begin{align}\label{AgnWeUs}
&\max_{i\in \mathcal I_n (y) }\frac{(\Delta_i^n X)^2}{  r_n^*  \widehat c_{n,0}(i)}\\
& \leq\bigg(\frac{1}{1+\delta'}\bigg)\max_{1\leq i \leq n} \bigg(\frac{ \sup_{t\in [\frac{i-k_n/2}{n}, \frac{i+k_n/2}n)} \sigma_t^2 }{ \underline{c}_{n}(i)}\bigg) \max_{i\in \mathcal I_n (y) }\frac{(\Delta_i^n X)^2}{ 2\big(\sup_{t\in [\frac{i-k_n/2}{n}, \frac{i+k_n/2}n)} \sigma_t^2\big) h \log n}\notag\\ %
\nonumber
& \leq \bigg(\frac{1}{1+\delta'}\bigg)  \bigg(1+o_P(1)\bigg)\bigg(\max_{i\in \mathcal I_n (y) } \frac{ ( \Delta_i^n (\sigma \!\cdot\! W))^2 }{2\big(\sup_{t\in[\frac{i-k_n/2}{n}, \frac{i+k_n/2}n)} \sigma_t^2\big) h \log n}+ o_P(1)\bigg),%
\end{align}
where on the last line we applied Proposition \ref{p:prop3}.  Together with Lemma \ref{l:mod_of_cont}, this shows, for small enough $\eta>0$, with probability tending to 1,
$$
\max_{i\in\mathcal I_n (y)}\frac{(\Delta_i^n X)^2}{ r^*_n \widehat c_{n,0}(i)}  \leq 1-\eta,
$$
which in turn implies \eqref{Eq1c} holds with probability tending to 1. The statement \eqref{Eq1d} is shown along the same lines of \eqref{AgnWeUs}, replacing $\widehat c_{n,0}(i)$ with $\mathscr c_{n}(i;y)$ in \eqref{AgnWeUs},  and $\underline{c}_{n}(i)$  with  $\mathscr c_{n}(i;\widetilde y_n\wedge y)$ (giving $\mathscr c_{n}(i;y)\geq \mathscr c_{n}(i;\widetilde y_n\wedge y)$) and applying Proposition \ref{p:prop3} to $\mathscr c_{n}(i;\widetilde y_n\wedge y)$.
\end{proof}

\begin{proof}[Proof of Theorem \ref{thm:consistency_clt_con}]
We begin by laying out some arguments used across all cases (i)--(iii). First note that for each $n$, the sequence $\widehat C_{n,j}$ (and hence, $B_{n,j}$) is either nonincreasing or nondecreasing in $j$. Indeed, suppose that $\widehat C_{n,0}\leq \widehat C_{n,1}$. Then, $B_{n,0}\leq B_{n,1}$, giving
\begin{align*}
	\widehat C_{n,2} &= \sum_{i=1}^n (\Delta_i^n X)^2 {\bf 1}_{\{|\Delta_i^n X|\leq  B_{n,1}\}}\geq\sum_{i=1}^n (\Delta_i^n X)^2 {\bf 1}_{\{|\Delta_i^n X|\leq  B_{n,0}\}}=\widehat C_{n,1}. 
\end{align*}
Then, $B_{n,1}\leq B_{n,2}$ and we can proceed by induction to conclude that $\widehat C_{n,j}$ is nondecreasing in $j$. If $\widehat C_{n,0}\leq \widehat C_{n,1}$, we can follow the same argument to show that $\widehat C_{n,j}$ is nonincreasing in $j$.
Also, since for each $n$, the function $B\mapsto \sum_{i=1}^n(\Delta_i^nX)^2 \,{\bf 1}_{\{|\Delta_i^n X|\leq  B \}}$ takes at most $n+1$ possible values, it holds that 
\begin{align*}%
    \widehat C_{n}=\widehat C_{n,n+1},\quad  \textnormal{a.s.}
\end{align*}

For any $y>0$, we will  now show that{,} for each $j\geq  1$, we have $\widehat C_{n,j} \geq { \widetilde{C}_{n,j}(y)}$.  First note the inequality  $\widehat C_{n,1} \geq \widetilde{C}_{n,1}(y)$ is straightforward by definitions \eqref{IMDE0}  and \eqref{def:C_tilde_y}, which implies $B_{n,1} \geq \wt B_{n,1}(y)$. 
Proceeding by induction, suppose that for some  $j\geq 1$ we have $\widehat C_{n,j} \geq  \widetilde C_{n,j}(y)$. Then, by definition, $B_{n,j} \geq \wt B_{n,j}(y)$, implying
\begin{align}
\widehat C_{n,j+1} &= \sum_{i=1}^n (\Delta_i^nX)^2 \,{\bf 1}_{\{|\Delta_i^n X|\leq  B_{n,j}\}} \notag\\
&\geq \sum_{i=1}^n (\Delta_i^nX)^2 \,{\bf 1}_{\{|\Delta_i^n X|\leq  \wt{B}_{n,j}(y)\}}\notag \\
&\geq \sum_{ i\in\mathcal I_n(y)} (\Delta_i^nX)^2 \,{\bf 1}_{\{|\Delta_i^n X|\leq  \wt{B}_{n,j}(y)\}} = \widetilde C_{n,j+1}(y).\label{e:inductive_Cjn}
\end{align}
Therefore, for all $j\geq 1$, $\widehat C_{n,j} \geq \widetilde C_{n,j}(y)$; in particular, $\widehat C_{n}=\widehat C_{n,n+1} \geq \widetilde C_{n,n+1}(y)$. %
Next, we decompose $\widehat C_n$ as
\begin{align*}
    \widehat C_n &= \sum_{i=1}^n (\Delta_i^nX)^2 \,{\bf 1}_{\{|\Delta_i^n X|\leq  B_{n,n}\}} \\
    &= \sum_{i=1}^n (\Delta_i^nX)^2 \Big({\bf 1}_{\{\Delta_i^n N(y)=0,\, \Delta_i^n N' = 0\}} + {\bf 1}_{\{\Delta_i^n N(y)=0,\, \Delta_i^n N' \neq 0,\,|\Delta_i^n X|\leq  B_{n,n}\}} \\
    &\qquad \qquad \qquad \qquad - {\bf 1}_{\{\Delta_i^n N(y)=0, \,|\Delta_i^n X| > B_{n,n},\, \Delta_i^n N' = 0\}} + {\bf 1}_{\{\Delta_i^n N(y)\neq 0,\,|\Delta_i^n X|\leq  B_{n,n}\}}\Big)\\
    &\leq  \sum_{i=1}^n (\Delta_i^nX)^2 \Big({\bf 1}_{\{\Delta_i^n N(y)=0,\, \Delta_i^n N' = 0\}} + {\bf 1}_{\{\Delta_i^n N(y)=0,\, \Delta_i^n N' \neq 0,\,|\Delta_i^n X|\leq  B_{n,n}\}} \\
    &\qquad \qquad \qquad \qquad + {\bf 1}_{\{\Delta_i^n N(y)\neq 0,\,|\Delta_i^n X|\leq  B_{n,n}\}}\Big)\\
    &= \mathscr C_n(y) + R_n,%
\end{align*}
where $R_n = \sum_{i=1}^n (\Delta_i^nX)^2\big({\bf 1}_{\{\Delta_i^n N(y)\neq 0,\,|\Delta_i^n X|\leq  B_{n,n}\}} + {\bf 1}_{\{\Delta_i^n N(y)=0,\, \Delta_i^n N' \neq 0,\,|\Delta_i^n X|\leq  B_{n,n}\}} \big) $.  Thus,
\begin{align}\label{eq:squeeze}
    \wt C_{n,n+1}(y)\leq \widehat C_n\leq \mathscr C_n(y) + R_n.
\end{align}
Further, with the sequence of events $\Omega_n:=\{\wt C_{n,n+1}(y)=\mathscr C_n(y)\}$, we may write expression \eqref{eq:squeeze} as 
\begin{align}\label{eq:squeeze2a}
  \mathscr C_n(y)+{\bf 1}_{\Omega_n^c}\big(\wt C_{n,n+1}(y)-\mathscr C_n(y)  \big)\leq \widehat C_n \leq \mathscr C_n(y) + R_n.
\end{align}
We now turn to the statement (i), in which case we recall $r_n$ is assumed to satisfy
\begin{align}\label{eq:rn_toprob}
  h \log (1/h)\ll r_n \ll \left(h\log (1/h)\right)^{\frac\alpha2}.%
\end{align}
Under this assumption, we may choose a sequence $y_n\to0$ such that, for some small $\delta>0$,
\begin{equation}\label{e:y_in_main_proof}
r_n^{\frac1\alpha} \vee h^{(\frac{1}{\alpha}\wedge 1)-\delta} \ll y_n \ll (h\log n)^{\frac12}, %
\end{equation}
under which the hypotheses of Proposition \ref{p:prop2}(i) are satisfied, giving $\mathscr C_{n}(y) \stackrel{P}\to C_T$.

Now, since $r_n \gg h\log n $,  clearly $r_n\geq 2\Big(\frac{\overline \sigma^2}{\int_0^1\sigma_s^2ds} \Big)h \log n$ a.s. for all large $n$, showing the hypotheses of Proposition \ref{prop:allequal} are satisfied, giving $\bP(\Omega_n)\to 1$. Further, by the assumption \eqref{e:y_in_main_proof}, $y^\alpha\gg r_n$, giving
$$\bE\left(N_1(y)   r_n  \right) = { O\left( y^{-\alpha}   r_n  \right) }= o(1). $$
 Therefore,
\begin{align*}
    0\leq R_n &\leq N_1(y)B_{n,n}^2 + N'_1 B_{n,n}^2=N_1(y)  r_n  \widehat C_{n,n-1}+ N'_1  r_n \widehat C_{n,n-1} \toprob 0,
\end{align*}
where above we used that $\widehat C_{n,n-1}=
	\sum_{i=1}^n (\Delta_i^n X)^2 {\bf 1}_{\{|\Delta_i^n X|\leq  B_{n,n-1} \}} \leq\sum_{i=1}^n (\Delta_i^nX)^2=O_{P}(1)$.
Since ${\bf 1}_{\Omega_n^c}\big(\mathscr C_n(y)-\wt C_{n,n+1}(y)\big)=o_P(1)$, expression  \eqref{eq:squeeze2a} and $R_n=o_P(1)$ give (i).

\medskip
We now establish (ii), in which case,  we recall $r_n$ is assumed to satisfy
\begin{align}\label{eq:rn_todist}
 h \log (1/h) \ll r_n \ll  h^{\frac{\alpha+1}{2}} (\log (1/h))^{\frac\alpha2}.
\end{align}
Under these constraints, we may again select a sequence $y\to 0$ satisfying \eqref{e:y_in_main_proof}.
 By \eqref{eq:squeeze2a}, we have
 \begin{align}
&\sqrt{n}\left(\mathscr C_n(y)  - C_T\right)+\sqrt n {\bf 1}_{\Omega_n^c}\big(\wt C_{n,n+1}(y)-\mathscr C_n(y)  \big)\notag\\
&\quad \leq \sqrt{n}\left(\widehat C_n   - C_T\right)\notag\\
&\quad\leq \sqrt{n}\left(\mathscr C_n(y)  - C_T\right)  +\sqrt n R_n.\label{eq:squeeze_clt}
\end{align}
Note that, for the right side of \eqref{eq:squeeze_clt}, the condition \eqref{eq:rn_todist} gives $r_n\ll n^{-1/2} y^\alpha $, which yields%
\begin{align*}%
    \sqrt nR_n &=\sqrt{n}\sum_{i=1}^n (\Delta_i^nX)^2\left({\bf 1}_{\{\Delta_i^n N(y)\neq 0,\,|\Delta_i^n X|\leq  B_{n,n}\}} + {\bf 1}_{\{\Delta_i^n N(y)=0,\, \Delta_i^n N' \neq 0,\,|\Delta_i^n X|\leq  B_{n,n}\}} \right)\\
    &\leq \sqrt{n} N_1(y)  r_n  \widehat C_{n, n} + \sqrt{n} N'_1  r_n \widehat C_{n, n} \\
    & = O_P( n^{1/2}r_ny^{-\alpha}) + O_P(n^{1/2}r_n)\toprob 0.%
\end{align*}
Moreover, since $\alpha<1$, we have $y\ll (h\log n)^{1/2}\ll h^{\frac{1}{2(2-\alpha)}}$, implying the hypotheses of Proposition \ref{p:prop2}(ii) are satisfied, and the hypotheses of  Proposition \ref{prop:allequal} are clearly also satisfied. Statement (ii) then follows from Proposition \ref{p:prop2}(ii), Proposition \ref{prop:allequal}, and the string of inequalities in \eqref{eq:squeeze_clt}, since $\bP(\Omega_n^c)\to 0$, implies
\begin{equation}\label{e:sqrt_oracle_op(1)}
\sqrt n {\bf 1}_{\Omega_n^c}\big(\wt C_{n,n+1}(y)-\mathscr C_n(y)  \big) = o_P(1).
\end{equation}
For statement (iii), we now take $y\to 0$ such that
$$
h^{\frac{1}{2}}\ll y \ll (h\log n)^{\frac{1}{2}}.
$$
In particular, under such choice of $y$, from  Proposition \ref{p:prop2}(iii) we have $\sqrt{n}\left(\mathscr C_n(y)  - C_T\right)\toprob \infty$. Again from \eqref{eq:squeeze2a}, we have the string of inequalities \eqref{eq:squeeze_clt}, and since we still have $r_n \gg h \log(1/h)$, we may apply Proposition \ref{prop:allequal} to again conclude \eqref{e:sqrt_oracle_op(1)}, and thus the leftmost inequality in \eqref{eq:squeeze_clt} yields the result.%
\end{proof}

\begin{proof}[Proof of Proposition \ref{p:divergence_C_hat}] For simplicity we write $Y$ instead of $Y'$ throughout the proof.  Let $Y_t = \int_0^t \sigma_s dW_s$, where
$$
\sigma_t= a{\bf 1}_{\{t<\theta\}} + b{\bf 1}_{\{t\geq \theta\}},
$$
so that $C_T(Y)= a^2\theta + b^2(1-\theta)$.  Above, the quantities $\theta \in(0,1)$, $0<a<b$ are nonrandom constants chosen so that
 $$\delta :=  \frac{c_0}{2}\cdot \frac{C_T(Y)}{ b^2}%
 = \frac{c_0}{2}\Big(\frac{a^2}{b^2 }\theta + (1-\theta)\Big)  \in\Big(0,\frac{1}{2}\Big).$$
  We have
\begin{align}
\hspace{-2ex}\textnormal{TRV}_n(Y;\vartheta_n) -C_T(Y) &= \Big(\sum_{i=1}^n(\Delta_i^n Y)^2 -C_T(Y)\Big) -  \sum_{i=1}^n(\Delta_i^n Y)^2{\bf 1}_{\{|\Delta_i^n Y|> \vartheta_n\}}\label{e:truncated_bias}\\
 &= O_P(n^{-1/2}) -  \bigg(\sum_{i=1}^{\lfloor n\theta\rfloor }+ \sum_{i=\lfloor n\theta \rfloor+2}^n\bigg)(\Delta_i^n Y)^2{\bf 1}_{\{|\Delta_i^n Y|> \vartheta_n\}}.\notag %
\end{align}
For simplicity let $\mathcal Y_i^n = (\Delta_i^n Y)^2{\bf 1}_{\{|\Delta_i^n Y|> \vartheta_n\}}$.
 For $Z\sim \mathcal N(0,1)$, $\phi(x)={(2\pi)}^{-1/2}e^{-x^2/2}$, an integration by parts shows %
\begin{equation}\label{e:z_2kmoments}
\bE Z^{2k}{\bf 1}_{\{|Z|> x\}} \sim 2 x^{2k-1}\phi(x),\quad x\to \infty.
\end{equation}
Hence, for $i>\lfloor n \theta \rfloor+1$, $\Delta_i^n Y\stackrel{d}=b\sqrt{h}Z$, and since $c_0 C_T(Y)/b^2=2\delta$, 
\begin{align}
 \bE\mathcal Y_i^n&=  \bE (\Delta_i^n Y)^2{\bf 1}_{\{| b \sqrt hZ|> \sqrt{c_0 C_T(Y) h \log (1/h)}\}}\notag\\
 &= b^2 h\bE Z^2{\bf 1}_{\{|Z|> \sqrt{2\delta \log n}\}}\notag\\
 &\sim 2 b^2h\sqrt{2\delta \log n}\, \phi\Big(\sqrt{2\delta \log n}\Big),\label{e:calY_firstmoment}
\end{align}
giving %
\begin{align*}
 \sum_{i=\lfloor n\theta \rfloor+2}^n\bE \mathcal Y_i^n%
& \sim   2 b^2(1-\theta) \sqrt{2\delta \log n} \cdot (2\pi)^{-1/2} n^{-\delta} =:K_0n^{-\delta} \sqrt{\log n}.
\end{align*}
Analogously, for $i\leq \lfloor n\theta\rfloor$,  $\Delta_i^n Y\stackrel{d}=a\sqrt{h}Z$,  and since $c_0 C_T(Y)/a^2=2(b^2/a^2)\delta>2\delta$,  we have $\bE \sum_{i=1}^{\lfloor n\theta\rfloor }\mathcal Y_i^n= O\big(n^{-\delta(b^2/a^2)} \sqrt{\log n}\big)=o\big(n^{-\delta} \sqrt{\log n}\big).$ This implies
\begin{equation}\label{e:calY_firstmoments}
\frac{n^{\delta}}{\sqrt{\log n}}\sum_{i=1}^n\bE\mathcal Y_i^n \to K_0.%
\end{equation}
On the other hand, using \eqref{e:z_2kmoments}, we have,  for $i>\lfloor n\theta\rfloor +1$,
$$
 \bE(\mathcal Y_i^n)^2 = b^4 h^2 \bE Z^4{\bf 1}_{\{|Z|> \sqrt{2\delta \log n}\}} = O\big(n^{-2-\delta}({\log n})^{3/2}\big).
$$
Since \eqref{e:calY_firstmoment} gives $(\bE\mathcal Y_i^n)^2=O(n^{-2-2\delta} \log n)$, we obtain $\Var(\mathcal Y_i^n) = O\big(n^{-2-\delta}({\log n})^{3/2}\big)$ for all $i>\lfloor n\theta\rfloor+1$. 
 Arguing analogously as for \eqref{e:calY_firstmoments}, for $i\leq \lfloor n\theta\rfloor$ we have $ \Var(\mathcal Y_i^n) \ll n^{-2-\delta}({\log n})^{3/2}$.  Thus, 
$$
\Var \bigg(\sum_{i=1}^n \mathcal Y_i^n\bigg)  = O( n^{-1-\delta} (\log n)^{3/2}),
$$
giving
$$
\frac{n^{\delta}}{\sqrt{\log n}}\sum_{i=1}^n( \mathcal Y_i^n-\bE\mathcal Y_i^n) = o_P(n^{(\delta-1)/2}(\log n)^{1/4})=o_P(1).
$$
Thus, from \eqref{e:truncated_bias}, we obtain  $\textnormal{TRV}_n(Y;\vartheta_n)\toprob C_T$ and
\begin{align*}
n^{1/2}\Big(\textnormal{TRV}_n(Y;\vartheta_n) -C_T(Y) \Big)& =O_P(1) - n^{1/2}\bigg(\sum_{i=1}^n \bE\mathcal Y_i^n+ \sum_{i=1}^n( \mathcal Y_i^n-\bE\mathcal Y_i^n)\bigg)\notag\\
&= O_P(1) - \big(n^{\frac{1}{2}-\delta}\sqrt {\log n}\big) \bigg(\frac{n^{\delta}}{\sqrt{\log n}}\sum_{i=1}^n\bE\mathcal Y_i^n + o_P(1)\bigg)\notag\\
&= O_P(1) - \big(n^{\frac{1}{2}-\delta}\sqrt {\log n}\big) \Big(K_0 + o_P(1)\Big)\\
& \toprob -\infty.
\end{align*}
\end{proof}

\begin{proof}[Proof of Theorem \ref{thm:consistency_clt_2}] %
The proof is similar to that of Theorem \ref{thm:consistency_clt_con}; we provide details here where there are substantive differences. We first note that for each $n$ and $i$, $\widehat c_{n,j}(i)$ (and, hence, $B^*_{n,j}(i)$) is either nonincreasing or nondecreasing in $j$. Indeed, suppose that $\widehat c_{n,0}(i)\geq \widehat c_{n,1}(i)$. Then, $B^*_{n,0}(i)\geq B^*_{n,1}(i)$ and thus, 
\begin{align*}
	\widehat c_{n,2}(i)&= \frac{1}{k_n} \sum_{\ell=i-k_n/2+1}^{i +k_n/2} \big(\Delta_\ell^n X\big)^2{\mathbf 1}_{\{|\Delta_\ell^n X|\leq B^*_{n,1}(i)\}}\\
	&\leq \frac{1}{k_n} \sum_{\ell=i-k_n/2+1}^{i +k_n/2} \big(\Delta_\ell^n X\big)^2{\mathbf 1}_{\{|\Delta_\ell^n X|\leq B^*_{n,0}(i)\}}=\widehat c_{n,1}(i). 
\end{align*}
Then, $B^*_{n,1}(i)\geq B^*_{n,2}(i)$ and we can proceed by induction to conclude that $\widehat c_{n,j}(i)$ is nonincreasing in $j$. Analogously, in the case $\widehat c_{n,0}(i)\leq \widehat c_{n,1}(i)$, both sequences $\widehat c_{n,j}(i)$ and $B^*_{n,j}(i)$ are nondecreasing in $j$. As a consequence, we will have that $B^*_n(i)=B^{*}_{n,n+1}(i)$ and 
\[
	\widehat C^{*}_{n} = \sum_{i=1}^n (\Delta_i^nX)^2 \,{\bf 1}_{\{|\Delta_i^n X|\leq  B^*_{n,n+1}(i)\}}.
\]
Next, for any $y>0$, recalling $\widetilde C^*_{n,j}(y)$ as in \eqref{e:def_spot_oracle}, and $\widetilde c_{n,j}(i;y),$ $\widetilde{B}^*_{n,j}(i;y)$ as in \eqref{e:Bj(i;y)}, clearly  $\widetilde c_{n,1}(i;y) \leq \widehat c_{n,1}(i)$, $i=1,\ldots, n$, giving $\widetilde B^*_{n,1}(i;y) \leq B^*_{n,1}(i)$, $i=1,\ldots,n$.  Arguing inductively  in an analogous manner to \eqref{e:inductive_Cjn}, we then obtain
$$
\widetilde B^*_{n,j}(i;y) \leq B^*_{n,j}(i),\quad i=1,\ldots, n, \quad j\geq 1.
$$
Arguing the same fashion as in \eqref{eq:squeeze2a}, we obtain
\begin{align}\label{eq:squeeze3}
   \mathscr C_n(y)+ {\bf 1}_{(\Omega_n^*)^c}\big(\wt C^*_{n,n+1}(y)-\mathscr C_n(y)  \big) \leq \widehat C^{*}_n\leq  \mathscr C_n(y) + R^{*}_n,
\end{align}
where $\Omega^*_n=\{\wt C^*_{n,n+1}(y)=\mathscr C_n(y)\}$ and
$$
R^{*}_n = \sum_{i=1}^n (\Delta_i^nX)^2\big({\bf 1}_{\{\Delta_i^n N(y)\neq 0,\,|\Delta_i^n X|\leq  {B}^{*}_{n,n}(i)\}} + {\bf 1}_{\{\Delta_i^n N(y)=0,\, \Delta_i^n N' \neq 0,\,|\Delta_i^n X|\leq  {B}^{*}_{n,n}(i)\}} \big).
$$

Turning now to (i), choose $y\to 0$ in such a way that \eqref{e:y_in_main_proof} holds with $r_n^*$ replacing $r_n$.  Applying Proposition \ref{prop:allequal},  we obtain  $\bP(\Omega^*_n)\to 1$.  The statement then follows from Proposition \ref{p:prop2}(i), the convergence ${\bf 1}_{\Omega_n^c}\big(\wt C^*_{n,n+1}(y)-\mathscr C_n(y)  \big) =o_P(1)$,  expression \eqref{eq:squeeze3},  and
\begin{align*}
    0\leq R^{*}_n &\leq N_1(y)\Big(\max_{1\leq i \leq n}B^{*}_{n}(i)\Big)^2 + N'_1 \Big(\max_{1\leq i \leq n}B^{*}_{n}(i)\Big)^2\\
    &\leq   O_P(y^{-\alpha}r_n^*) +    O_P(r_n^*) \toprob 0,
\end{align*}
since  $\max_{1\leq i \leq n}(B^{*}_{n}(i))^2/r_{n}^*\leq \max_{1\leq i \leq n}\widehat{c}_{n,n+1}(i)\leq  \max_{1\leq i \leq n}\frac{n}{k_n}\sum_{\ell=i-k_n/2+1}^{i+k_n/2}(\Delta_\ell^n X)^2=O_P(1)$ (see \cite{figueroa-lopez:wu:2022}), and $r^*_n\ll y^\alpha$ by assumption.
Similarly, for (ii), we again take $y\to 0$ such that \eqref{e:y_in_main_proof} is satisfied (with $r_n^*$ replacing $r_n$). Then, from expression \eqref{eq:squeeze3}, we may arrive at \eqref{eq:squeeze_clt} with $\widetilde C^*_{n,j}(y)$ replacing  $\widetilde C_{n,j}(y)$ and $\Omega_n^*$ replacing $\Omega_n$. 
The statement then follows in view of the convergence $\sqrt n{\bf 1}_{\Omega_n^c}\big(\wt C^*_{n,n+1}(y)-\mathscr C_n(y)  \big) =o_P(1),$ Proposition \ref{p:prop2}(ii), and the estimates%
\begin{align*}
    0\leq \sqrt n R^{*}_n &\leq N_1(y)\Big(\max_{1\leq i \leq n}B^{*}_{n}(i)\Big)^2 + N'_1 \Big(\max_{1\leq i \leq n}B^{*}_{n}(i)\Big)^2\\
       &\leq   O_P(\sqrt n y^{-\alpha}r_n^*) +    O_P(\sqrt n r_n^* ) \toprob 0,
\end{align*}
since $r^*_n\ll n^{-1/2} y^\alpha$ by assumption. 
Statement (iii) is proved analogously to Theorem \ref{thm:consistency_clt_con}(iii).
\end{proof}

\section{Auxiliary results}\label{Sec:proofs_of_lemmas}
Throughout this section, for notational simplicity, we again often omit the subscript $n$ in $h_n$ and $y_n$.
\begin{lemma}\label{l:maxM(y)}
Let $m=m_n\geq 0$ be any sequence with $m\gg h y^{1-\alpha}$.  Then for for large enough $n$, for $M_t(y)$ as in \eqref{eq:L_decompose},  we have
$$ \bP\left( |\Delta_i^n M(y)|>m\right)\leq 2\left(\frac{eh\Sigma_y}{my}\right)^{\frac{m}{2y}},
$$
where $\Sigma_y := \int_{|z|\leq y} z^2 \nu(dz)\sim K y^{2-\alpha}$.
\end{lemma}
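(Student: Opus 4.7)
The plan is to use a standard Chernoff argument combined with a Bennett-type bound on the moment generating function of the compensated small-jump integral. By independent stationary increments and the L\'evy-Khintchine formula, for any $\theta>0$,
\[
\bE\, e^{\theta \Delta_i^n M(y)} = \exp\!\Big(h\int_{|x|\leq y}(e^{\theta x}-1-\theta x)\,\nu(dx)\Big).
\]
The key step would be to bound the integrand by exploiting that $g(u) := (e^u-1-u)/u^2$ is nondecreasing on $[0,\infty)$ (as seen from the nonnegative power-series representation $g(u) = \sum_{k\geq 2} u^{k-2}/k!$) and that $g(-u)\leq g(u)$ for $u>0$ (since $e^u - e^{-u} > 2u$). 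Together these give $e^{\theta x}-1-\theta x \leq (x/y)^2(e^{\theta y}-1-\theta y)$ for $|x|\leq y$ and $\theta>0$, which upon integration against $\nu$ produces the Bennett-type MGF estimate
\[
\bE\, e^{\theta \Delta_i^n M(y)} \leq \exp\!\Big(\tfrac{h\Sigma_y}{y^2}(e^{\theta y}-1-\theta y)\Big).
\]

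Applying Chernoff's inequality and setting $v := my/(h\Sigma_y)$, I would optimize in $\theta>0$: the minimum of $-\theta m + h\Sigma_y(e^{\theta y}-1-\theta y)/y^2$ is attained at $\theta^{*} y = \log(1+v)$, and a short computation using the identity $h\Sigma_y v/y^2 = m/y$ reduces the optimal exponent to
\[
\bP\big(\Delta_i^n M(y) > m\big) \leq \exp\!\Big(\tfrac{m}{y}\bigl[1 - (1+1/v)\log(1+v)\bigr]\Big).
\]
To convert this sharp bound into the stated form, I plan to use the elementary inequality $(1+v)^2\geq ev$ for $v \geq 0$, which holds because the quadratic $v^2+(2-e)v+1$ has negative discriminant $e(e-4) < 0$ and is therefore everywhere positive. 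Equivalently $\log(1+v) \geq (1+\log v)/2$, and combining with $1+1/v\geq 1$ gives $(1+1/v)\log(1+v)-1 \geq (\log v-1)/2$. Substituting,
\[
\bP(\Delta_i^n M(y) > m) \leq \exp\!\Big(-\tfrac{m(\log v-1)}{2y}\Big) = \Big(\tfrac{eh\Sigma_y}{my}\Big)^{m/(2y)}.
\]

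The lower tail $\bP(-\Delta_i^n M(y)>m)$ satisfies the same bound by applying the identical argument to $-M(y)$, which is again a compensated L\'evy integral with jumps bounded by $y$ in magnitude and the same variance parameter $\Sigma_y$ (the reflected L\'evy measure $A\mapsto\nu(-A)$ has the same second-moment integral on $[-y,y]$). A union bound produces the factor of $2$ in the claim. The condition $m\gg hy^{1-\alpha}$ combined with $\Sigma_y\sim Ky^{2-\alpha}$ forces $v\to\infty$, which makes the stated bound strictly less than $1$ for all sufficiently large $n$, justifying the ``for large enough $n$'' qualifier. I do not anticipate a substantial obstacle in this proof; the only mildly nontrivial step is the passage from the optimal Bennett exponent to the simpler closed form via the inequality $(1+v)^2\geq ev$.
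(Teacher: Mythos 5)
Your proof is correct, and while it rests on the same underlying idea as the paper's (an exponential Chebyshev bound for the infinitely divisible variable $\Delta_i^n M(y)$), the execution is genuinely different. The paper invokes Lemma 26.4 of \cite{sato:1999}, which packages the Chernoff bound as $\exp\{-\int_0^m\theta_h(s)\,ds\}$ with $\theta_h$ the inverse of $s_h(x)=h\int_{|z|\leq y}z(e^{xz}-1)\nu(dz)$; it then bounds $s(x)\leq y^{-1}(e^{2xy}-1)\Sigma_y$, inverts to get $\theta(z)\geq \frac{1}{2y}\log(1+\frac{y}{\Sigma_y}z)$, and integrates. This forces a separate case analysis when $s_+:=\lim_{x\to\infty}s(x)<\infty$ (i.e.\ $\nu(0,y]=0$ with finite variation on the negative side), where the inverse does not exist on all of $[0,\infty)$; there the paper falls back on a support argument via Theorem 24.7 of \cite{sato:1999}, and this is precisely where the hypothesis $m\gg hy^{1-\alpha}$ and the ``large enough $n$'' qualifier are used. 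Your route instead bounds the cumulant directly with the standard Bennett device $e^{\theta x}-1-\theta x\leq (x/y)^2(e^{\theta y}-1-\theta y)$ for $|x|\leq y$, optimizes $\theta$ explicitly, and passes to the stated closed form via $(1+v)^2\geq ev$; all of these steps check out (including the identity $h\Sigma_y v/y^2=m/y$ and the reduction of the optimal exponent). What your approach buys is uniformity: the MGF identity and the Bennett bound hold for any L\'evy measure supported in $[-y,y]$, so no case distinction is needed, the bound is valid for every $n$ with $m>0$, and the condition $m\gg hy^{1-\alpha}$ is only needed (as you correctly observe) to make the bound nontrivial --- which is all that matters for the downstream application in Lemma \ref{l:maxbound_chi}. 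The symmetry argument for the lower tail and the union bound giving the factor $2$ are also fine.
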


\begin{proof}
Note that for each fixed $y$, $\Delta_i^n M(y)$ is infinitely divisible with triplet $(0,0,h{\bf 1}_{\{|x|\leq y\}}\nu(dx))$, and in particular $\bE e^{u \Delta_i^n M(y)} <\infty$ for all $u\in \bR$.  Write
$$
s_h(x) = hs(x)=h\int_{|z|\leq y} z\left(e^{xz}-1\right) \nu(dz). 
$$
Note $s(0)=0$ and $s(x)$ is increasing. Let $s_+=\lim_{x\to\infty}s(x)$, and first suppose $s_+=\infty$.  By Lemma 26.4 in \cite{sato:1999}, the inverses $\theta_h=s_h^{-1}$ and $\theta=s^{-1}$ exist on $[0,\infty)$, and %
\begin{align*}
    \bP\left(\Delta_i^n M(y) >k\right) \leq \exp\left\{-\int_{0}^{k} \theta_h(s)ds\right\}=\exp\left\{-h\int_{0}^{k/h} \theta(z)dz\right\}.
\end{align*}
Observe
\begin{align*}
    s(x) \leq \int_{|z|\leq y} xz^2 e^{xz} \nu(dz) \leq xe^{xy}\Sigma_y \leq \frac{e^{2xy}-1}{y}\Sigma_y.
\end{align*}
This implies 
$$
\theta(z) \geq \frac{1}{2y}\log \Big(1+\frac{y}{\Sigma_y}z\Big).
$$
So, we have
\begin{align}
    \bP\left(\Delta_i^n M(y) >m\right) &\leq \exp\left\{-\frac{h}{2y}\int_0^{m/h}\log \Big(1+\frac{y}{\Sigma_y}z\Big)dz \right\}\notag\\
    &= \exp\left\{-\frac{h\Sigma_y}{2y^2}\int_0^{\frac{my}{h\Sigma_y}}\log (1+s)ds \right\}\notag\\
    &\leq \exp\left\{-\frac{m}{2y} \log \frac{my}{eh\Sigma_y} \right\}\notag\\
    &=  \left(\frac{eh\Sigma_y}{my}\right)^{\frac{m}{2y}}.\label{e:Deltam_righttail}
\end{align}
If instead $s_+<\infty$, then necessarily $\nu(0,y]=0$ and $\int_{[-y,0)}|z|\nu(dz)<\infty$.  Thus, \cite[Theorem 24.7]{sato:1999} implies $\Delta_i^n M(y)$ is supported on $(-\infty, a_y]$, with $a_y=h\int_{(-y,0)}|z|\nu(dz)=O(h y^{1-\alpha})=o(m)$, giving $\bP\left(\Delta_i^n M(y) >m\right)=0$ for all large $n$.

By repeating the argument for the cases $s_+=\infty$ and $s_+<\infty$ to $-\Delta_i^n M(y)$, we obtain the bound \eqref{e:Deltam_righttail} for  $\bP\left(-\Delta_i^n M(y) >m\right)$, completing the proof.
\end{proof}

\begin{lemma}\label{l:maxbound_chi}Suppose $y\to 0$ with $y\gg h^{(\frac{1}{\alpha} \wedge 1)-\delta}$ for some $\delta>0$.  Then, with $\Delta_i^n\chi(y)$ as defined in \eqref{e:delta_i^nX_over_In(y)}, 
\begin{equation}\label{e:maxbound_chi}
\max_{1\leq i \leq n} |\Delta_i^n \chi(y)|=o_P(y).
\end{equation}
\end{lemma}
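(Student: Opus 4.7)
The plan is to decompose $\Delta_i^n\chi(y) = \int_{t_{i-1}}^{t_i}\gamma_s\, dM_s(y) + \Delta_i^n b(y)$ according to \eqref{e:delta_i^nX_over_In(y)} and control the drift and compensated-jump pieces separately. The drift piece is essentially deterministic and handled via \eqref{e:estimates0}; the martingale piece is handled by an exponential tail bound of the type underlying Lemma \ref{l:maxM(y)}, combined with a union bound over $i=1,\dots,n$.

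For the drift, the estimate $|\Delta_i^n b(y)|\leq Kh(y^{1-\alpha}+1)$ from \eqref{e:estimates0} reduces matters to checking that $h/y$ and $hy^{-\alpha}$ are both $o(1)$ under the hypothesis $y\gg h^{(1/\alpha\wedge 1)-\delta}$. A short case analysis on $\alpha$ takes care of this: if $\alpha\leq 1$, then $(1/\alpha)\wedge 1=1$, so $y\gg h^{1-\delta}$ yields $h/y\ll h^\delta\to 0$ while $y^{1-\alpha}$ stays bounded; if $\alpha>1$, then $y\gg h^{1/\alpha-\delta}$ gives $hy^{-\alpha}\ll h^{\alpha\delta}\to 0$, whence $hy^{1-\alpha}/y=hy^{-\alpha}=o(1)$. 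In either case $\max_{1\leq i\leq n}|\Delta_i^n b(y)|=o(y)$ deterministically.

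For the compensated-jump piece, localization lets me assume $|\gamma_t|\leq K$, so that $N_i:=\int_{t_{i-1}}^{t_i}\gamma_s\,dM_s(y)$ is a martingale increment with jumps bounded by $Ky$ and predictable quadratic variation on $[t_{i-1},t_i]$ bounded by $K^2 h\Sigma_y$ where $\Sigma_y\sim K'y^{2-\alpha}$. Repeating the cumulant-generating-function computation in the proof of Lemma \ref{l:maxM(y)} (with $\gamma$ incorporated into the predictable bounds) produces, for each fixed $\eta>0$ and large enough $n$,
\[
\bP\bigl(|N_i|>\eta y\bigr)\;\leq\; 2\left(\frac{C\,hy^{-\alpha}}{\eta}\right)^{\eta/(2K)},
\]
for an explicit constant $C=C(K,\alpha)$. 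Under the hypothesis on $y$, one checks $hy^{-\alpha}\ll h^{\alpha\delta}$ in both ranges of $\alpha$, so this probability is $O\!\left(h^{\eta c(\alpha,\delta)}\right)$ for an explicit $c(\alpha,\delta)>0$. A union bound then yields $\bP(\max_{1\leq i\leq n}|N_i|>\eta y)=O\!\left(h^{\eta c-1}\right)\to 0$ for all sufficiently large $\eta$, which already gives $\max_i|N_i|=O_P(y)$.

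The main obstacle will be upgrading $O_P(y)$ to $o_P(y)$, i.e.\ making the union bound succeed for \emph{every} $\eta>0$. The Bennett/cumulant exponent derived as above is linear in $\eta$, so a crude union bound kills the $n=h^{-1}$ factor only for $\eta$ exceeding the threshold $1/c(\alpha,\delta)$. To cover small $\eta$, I would refine the exponential inequality by keeping the full $\psi(x)=(1+x)\log(1+x)-x$ form rather than its asymptotic $x\log x$ behavior, exploiting that under the hypothesis the argument of $\psi$ diverges to extract a super-polynomial factor in the exponent; alternatively, I would split the analysis according to whether the interval $[t_{i-1},t_i]$ contains a jump of $L$ of size close to $y$ (a negligible subset of indices) or not (in which case the purely sub-Gaussian concentration for $N_i$ at scale $\sqrt{h\Sigma_y}=o(y)$ is already enough). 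Handling these near-maximal-jump intervals separately, and recombining, should close the remaining gap.
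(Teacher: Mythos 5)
Your decomposition and the mechanics of your bound are essentially those of the paper: the drift is controlled deterministically exactly as you do via \eqref{e:estimates0}, and the martingale part is controlled by an exponential tail bound plus a union bound over $i$. The one technical difference is that the paper does not apply an exponential inequality to the stochastic integral $\int_{t_{i-1}}^{t_i}\gamma_t\,dM_t(y)$ directly; it freezes the coefficient, writing the integral as $\gamma_{t_{i-1}}\Delta_i^nM(y)$ plus $\int_{t_{i-1}}^{t_i}(\gamma_t-\gamma_{t_{i-1}})\,dM_t(y)$, applies Lemma \ref{l:maxM(y)} (a statement about the infinitely divisible variable $\Delta_i^nM(y)$, proved via Sato's Lemma 26.4, not about general martingales) to the first piece, and kills the second piece by Chebyshev using $\bE|\int_{t_{i-1}}^{t_i}(\gamma_t-\gamma_{t_{i-1}})\,dM_t(y)|^2\leq Ky^{2-\alpha}h^2$, which is where the hypothesis $\bE(|\gamma_{t+s}-\gamma_t|^2\mid\mathcal F_t)\leq Ks$ enters. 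If you run your version, you need a genuine exponential inequality for locally square-integrable martingales with jumps bounded by $Ky$; such inequalities exist, but Lemma \ref{l:maxM(y)} is not one of them, and the freezing trick is how the paper avoids importing one. Either way one arrives at the same place: $\bP(\max_{1\leq i\leq n}|\Delta_i^n\chi(y)|>\kappa y)\to0$ for all sufficiently large $\kappa$.

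On your ``main obstacle'': you are right that the union bound only closes for $\eta$ above a fixed threshold, but the paper's own proof stops exactly there as well --- it establishes $O_P(y)$, not $o_P(y)$, and its concluding ``which gives \eqref{e:maxbound_chi}'' is an overstatement. Moreover, neither of your proposed repairs can succeed, because the $o_P(y)$ bound is false in general: under Assumption \ref{assump:Coef0}(ii) the number of jumps of $L$ with magnitude in $(y/2,y]$ on $[0,1]$ is Poisson with mean of order $y^{-\alpha}\to\infty$, so with probability tending to one some sampling interval contains such a jump; on that interval $\Delta_i^nM(y)$ equals that jump plus an $o_P(y)$ remainder, so $\max_{1\leq i\leq n}|\Delta_i^n\chi(y)|\gtrsim y$ whenever $\gamma$ is nondegenerate (e.g., $\gamma\equiv1$). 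These near-maximal-jump intervals belong to $M(y)$ rather than to $N(y)$ or $N'$, so they are not excluded from the maximum over $1\leq i\leq n$, and they are not a negligible set of indices; no refinement of the concentration inequality can beat an event that actually occurs. The saving grace is that every downstream use of the lemma (in Propositions \ref{prop:allequal}, \ref{p:prop2} and \ref{p:prop3_IV}) only invokes $O_P(y)$, so the correct move is to state and prove the lemma with $O_P(y)$ rather than to chase the $o_P$ refinement.
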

\begin{proof}

First observe %
$$
\max_{1\leq i \leq n}\frac{|\Delta_i^n b_t(y)|}{y} \leq K\left(\frac{h(y^{1-\alpha}+1)}{y}\right)\to 0,
$$
because $h/y\ll h^{\delta}\to{}0$, and $h y^{-\alpha}\to 0$. %
Moreover, Lemma 2.1.5 in \cite{jacod:protter:2011} gives 
\begin{align*}
\bE_{i-1} \bigg|\int_{t_{i-1}}^{t_i}(\gamma_t-\gamma_{t_{i-1}}) dM_t(y)\bigg|^2 & \leq K y^{2-\alpha}\bE_{i-1} \int_{t_{i-1}}^{t_i}|\gamma_t-\gamma_{t_{i-1}}|^2 dt%
\leq K y^{2-\alpha} h^{2}.%
\end{align*}
Since $y \gg hy^{1-\alpha}$, and $|\gamma_t|\leq C$ for some $C>0$, we may then apply Lemma \ref{l:maxM(y)} to obtain, for large enough $\kappa>0$, and all large $n$ %
\begin{align*}
\bP\left( \bigg|\int_{t_{i-1}}^{t_i} \gamma_t dM_t(y)\bigg|>\kappa y\right)
&\leq \bP\left( \bigg|\int_{t_{i-1}}^{t_i}(\gamma_t-\gamma_{t_{i-1}}) dM_t(y)\bigg|>\kappa y/2\right) + \bP\big( |\gamma_{t_i}||\Delta_i^nM(y)| >\kappa y/2\big)\\
&\leq K\bigg( y^{-2} \bE  \bigg|\int_{t_{i-1}}^{t_i}(\gamma_t-\gamma_{t_{i-1}}) dM_t(y)\bigg|^2  +   \left(\frac{hy^{2-\alpha}}{\kappa y^2}\right)^{\frac\kappa{2C}}  \bigg)\\
& \leq K \bigg(  h^2 y^{-\alpha} +  \left(hy^{-\alpha}\right)^{\frac \kappa {2C}}\bigg)\\
&\leq K\bigg(  h^{1+\delta\alpha}  + \left(h^{\delta\alpha}\right)^{\frac\kappa{2C}} \bigg)=o(h).%
\end{align*}
This gives, for all large  $n$, and large enough $\kappa>0$, 
\begin{align*}
    \bP\left( \max_{1\leq i \leq n} |\Delta_i^n \chi(y)|\geq \kappa y \right) &\leq \sum_{i=1}^n\bP\left(|\Delta_i^n \chi(y)|\geq \kappa y\right) \nonumber\\
    &\leq \sum_{i=1}^n\Bigg(\bP\left(\bigg|\int_{t_{i-1}}^{t_i} \gamma_t dM_t(y)\bigg|\geq \kappa y/2\right) + \bP\left( {\max_{1\leq i \leq n}}\frac{|\Delta_i^n b_t(y)|}{y}>\kappa/2\right) \Bigg)\nonumber\\
    &= \sum_{i=1}^n\bP\left(\bigg|\int_{t_{i-1}}^{t_i} \gamma_t dM_t(y)\bigg|\geq \kappa y/2\right)=o(1),%
\end{align*}
which gives \eqref{e:maxbound_chi}.
\end{proof}

For statements ahead, recall
$
\mathscr C_{n}(y)= \sum_{i\in\mathcal I_n(y)} (\Delta_i^nX)^2,%
$
 where $\mathcal I_n(y) =\{i:  \Delta_i^n N(y) = 0,\, \Delta_i^n N' = 0\}$.

\begin{proposition}\label{p:prop2} The following statements hold.
\begin{enumerate}
\item[(i)] Suppose  $y=y_n\to 0$  satisfies $(h\log n)^{\frac{1}{\alpha}}  \vee  h^{1-\delta} \ll y $ for some $\delta\in(0,1)$. Then,%
\begin{equation}\label{e:basecase}
\mathscr C_{n}(y) \stackrel{P}\to C_T.%
\end{equation}
\item[(ii)] Suppose  $\alpha\in (0,1)$, and $y\to 0$ such that $(h^{1/2}\log n)^{\frac{1}{\alpha}}  \vee  h^{1-\delta} \ll y  \lesssim h^{\frac{1}{2(2-\alpha)}}$ for some $\delta\in(0,1)$. Then, %
\begin{equation*}%
 \sqrt n \bigg(\mathscr C_{n}(y) - C_T\bigg)\stackrel{st}\to \mathcal N\bigg(0,2 \int_0^1 \sigma^4_sds\bigg){.}%
\end{equation*}
\item[(iii)] Suppose  $\alpha\in (1,2)$, and $y\to 0$. Then, %
\begin{equation}\label{e:cases_for_y}
\sqrt n \bigg(\mathscr C_{n}(y) - C_T\bigg) \toprob\begin{cases}
 -\infty, &  h^{\frac{1}{2(2-\alpha)}} \vee h^{\frac{1}{\alpha}} \ll y\ll h^{\frac12},\\
  \infty, & y\gg h^{\frac12}.\\
\end{cases} 
\end{equation}
\end{enumerate} 
\end{proposition}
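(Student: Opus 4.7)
The plan is to decompose $\mathscr C_n(y) - C_T$ into a principal ``continuous martingale'' piece plus corrections controlled by the size of $y$, and then analyze each correction's scaling. The key observation is that on $\mathcal I_n(y)$ all large jumps (from $J$ and from $L$ of size exceeding $y$) are absent, so that $\Delta_i^n X = \Delta_i^n(\sigma\!\cdot\! W) + \Delta_i^n \chi(y)$ with $\chi(y)$ as in \eqref{e:delta_i^nX_over_In(y)}. Adding and subtracting the realized variance of $\sigma\!\cdot\! W$ yields
\begin{align*}
\mathscr C_n(y) - C_T = A_n - B_n + D_n,
\end{align*}
where $A_n := \sum_{i=1}^n (\Delta_i^n(\sigma\!\cdot\! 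W))^2 - C_T$, $B_n := \sum_{i\notin \mathcal I_n(y)}(\Delta_i^n(\sigma\!\cdot\! W))^2$, and $D_n := \sum_{i\in\mathcal I_n(y)}\bigl[(\Delta_i^n\chi(y))^2 + 2\Delta_i^n(\sigma\!\cdot\! W)\Delta_i^n\chi(y)\bigr]$.

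The term $A_n$ is handled by the classical CLT for realized variance of a continuous It\^o semimartingale: $A_n \toprob 0$ and $\sqrt n A_n \toDistSt \mathcal N\bigl(0,2\int_0^1\sigma_s^4\,ds\bigr)$. For $B_n$, using $|\mathcal I_n(y)^c| \leq N_1(y)+N'_1=O_P(y^{-\alpha})$ from \eqref{e:estimates0} together with $\max_i|\Delta_i^n(\sigma\!\cdot\! W)|^2=O_P(h\log n)$ from Lemma \ref{l:mod_of_cont}, I would obtain the pathwise bound $B_n=O_P(y^{-\alpha}h\log n)$; a sharper moment estimate $\bE B_n=\Theta(hy^{-\alpha})$ (useful for (iii)) follows from the independence of $W$ from $L,J$. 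For $D_n$, moment bounds $\bE(\Delta_i^n\chi(y))^2\leq K h y^{2-\alpha}+K h^2(y^{1-\alpha}+1)^2$, derived from the variance of $\int\gamma\,dM(y)$ and from the drift estimate $|\Delta_i^n b(y)|\leq Kh(y^{1-\alpha}+1)$ in \eqref{e:estimates0}, sum to $O(y^{2-\alpha})$ in the regimes of interest; the cross term has variance $O(y^{2-\alpha})$ by orthogonality (since $W\perp L$, the martingale part of $\chi(y)$ is orthogonal to $\sigma\!\cdot\! W$).

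Combining these gives the three statements. For (i), under $y\gg(h\log n)^{1/\alpha}\vee h^{1-\delta}$ one has $A_n=o_P(1)$, $B_n=O_P(y^{-\alpha}h\log n)=o_P(1)$, and $D_n=O_P(y^{2-\alpha})=o_P(1)$. For (ii), with $\alpha<1$, the lower bound $y\gg(h^{1/2}\log n)^{1/\alpha}$ yields $\sqrt n B_n=O_P(\sqrt h y^{-\alpha}\log n)=o_P(1)$, while the upper bound $y\ll h^{1/(2(2-\alpha))}$ yields $\sqrt n D_n=O_P(y^{2-\alpha}/\sqrt h)=o_P(1)$ (with the cross term subdominant by its martingale structure), so that only $\sqrt n A_n$ survives, giving the stable Gaussian limit. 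For (iii), the direction of divergence is determined by the ratio $\sqrt n B_n/\sqrt n\sum(\Delta_i^n\chi(y))^2$, which is of order $h/y^2$ (up to logarithms): when $y\ll h^{1/2}$ the $B_n$ contribution (entering with a minus sign) dominates and produces $-\infty$; when $y\gg h^{1/2}$ the positive bias of $D_n$ dominates and produces $+\infty$.

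The main obstacle is part (iii), where matching \emph{lower} bounds on the dominant terms are required to conclude divergence with the correct sign. For $B_n$, independence of jump indicators across disjoint increments makes the summands uncorrelated, giving $\Var B_n=O(y^{-\alpha}h^2)$ and $\Var B_n/(\bE B_n)^2=O(y^\alpha)\to 0$, so Chebyshev forces $B_n\geq \tfrac12\bE B_n\asymp hy^{-\alpha}$ with high probability; an analogous concentration argument applies to $\sum(\Delta_i^n\chi(y))^2$, using that its cross-term fluctuations are of smaller order $O_P(y^{(2-\alpha)/2})$ than the mean $\sim Ky^{2-\alpha}$. A secondary subtlety is the borderline in (ii): at $y$ comparable to $h^{1/(2(2-\alpha))}$ the deterministic bias $\bE\sum(\Delta_i^n\chi(y))^2\sim Ky^{2-\alpha}$ is of the same order as $\sqrt h$, so strict ``$\ll$'' appears necessary for a centered CLT, and the ``$\lesssim$'' in the statement warrants careful interpretation.
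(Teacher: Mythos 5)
Your decomposition $\mathscr C_n(y)-C_T=A_n-B_n+D_n$ is exactly the paper's: $A_n$ is the realized-variance CLT term, $B_n$ the Brownian increments removed on jump intervals (the paper's $I_0$ correction), and $D_n$ the small-jump/drift remainder (the paper's $I_1+I_2+I_3$). The order-of-magnitude comparisons, the use of Lemma \ref{l:mod_of_cont} and \eqref{e:estimates0}, and — for (iii) — the two-sided concentration bounds on $B_n$ and on $\sum(\Delta_i^n M(y))^2$ leading to the $h$-versus-$y^2$ dichotomy all coincide with the paper's argument (which uses conditional centering plus a variance bound for $I_0$ and a Lyapunov CLT for $I_1$). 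So the architecture is right and matches the paper.

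Two steps are under-justified. First, the cross term $\sqrt n\sum_{i\in\mathcal I_n(y)}\Delta_i^n(\sigma\!\cdot\!W)\,\Delta_i^n\chi(y)$: ``orthogonality'' alone does not give variance $O(y^{2-\alpha})$, because the per-increment second moment $\bE\big[(\Delta_i^n(\sigma\!\cdot\!W))^2(\int\gamma\,dM(y))^2\big]$ cannot be factored ($\sigma,\gamma$ are merely adapted), and the indicator $\{\Delta_i^nN(y)=0\}$ is not $\mathcal F_{t_{i-1}}$-measurable, so the restricted sum is not a martingale. You must first extend the sum to all $i$ (paying $N_1(y)\max|\cdot|\max|\cdot|$, as the paper does) and then either invoke the product identity $\bE[U^2V^2]=\bE\int U_{s-}^2\,d[V,V]_s+\bE\int V_{s-}^2\,d[U,U]_s$ for orthogonal martingale increments, or follow the paper's H\"older argument with $p\downarrow1$, which uses the upper constraint $y\lesssim h^{\frac{1}{2(2-\alpha)}}$ in an essential way. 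Second, in (iii) your claim that the cross-term fluctuations ``$O_P(y^{(2-\alpha)/2})$ are of smaller order than the mean $\sim Ky^{2-\alpha}$'' is backwards: since $y\to0$, $y^{(2-\alpha)/2}\gg y^{2-\alpha}$. The correct comparison is against the $\sqrt n$-scaled mean $h^{-1/2}y^{2-\alpha}$, and it succeeds only because $y\gg h^{1/(2-\alpha)}$ in that regime; more importantly, the cross term is \emph{not} uniformly negligible — the paper must show it is $o(|I_0|)$ when $y\ll h^{1/2}$ but only $o(I_1)$ when $y\gg h^{1/2}$, with different inequalities in each case. As written, your one-line dismissal of the cross terms in (iii) would not survive scrutiny, although the conclusion is correct once this case analysis is supplied.
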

\begin{proof}
 Set
\begin{equation*}%
\tau^2_n(y) = \sum_{i\in\mathcal I_n(y)} \big( \Delta_i^n (\sigma\!\cdot\! W)\big)^2.%
\end{equation*}
Observe that,  by definition of $N(y)$,
$$
	\bE N_1(y)\leq K\int_{|x|>y}|x|^{-\alpha-1}dx=Ky^{-\alpha}.
$$

Since Lemma \ref{l:mod_of_cont} gives $\limsup_{n\to\infty}  \max_{i=1,\dots,n}\frac{(\Delta_i^n (\sigma \cdot W))^2}{h\log n} < \infty$, we obtain%
\begin{align*}%
\begin{split}
 \sum_{i=1}^n ( \Delta_i^n (\sigma\!\cdot\! W))^2{\bf 1}_{\{\Delta_i^n N(y) \neq 0\}} &\leq \max ( \Delta_i^n (\sigma\!\cdot\! W))^2N_1(y)\\
 & =   O_P(h(\log n)y^{-\alpha}) = o_P(1). %
 \end{split}
\end{align*}
Similarly,
$$
\sum_{i=1}^n ( \Delta_i^n (\sigma\!\cdot\! W))^2{\bf 1}_{\{\Delta_i^n N' \neq 0\}} \leq N'_1 \max ( \Delta_i^n (\sigma\!\cdot\! W))^2 \stackrel P\to 0.
$$
Thus,
\begin{align*}
 0\leq \sum_{i=1}^n \big( \Delta_i^n (\sigma\!\cdot\! W)\big)^2 -\tau_n^2(y) &= \sum_{i=1}^n \big( \Delta_i^n (\sigma\!\cdot\! W)\big)^2(1-{\bf 1}_{\{\Delta_i^n N(y) = 0,\, \Delta_i^n N' = 0\}})\\
 &\leq \sum_{i=1}^n  \big( \Delta_i^n (\sigma\!\cdot\! W)\big)^2\big({\bf 1}_{\{\Delta_i^n N(y) \neq 0\}} + {\bf 1}_{\{\Delta_i^n N' \neq 0\}}\big)\stackrel P\to 0,%
\end{align*}
and we obtain
\begin{equation} \label{e:tau2_consistent0} 
\tau^2_n(y) =  \sum_{i=1}^n ( \Delta_i^n (\sigma\!\cdot\! W))^2+ o_P(1)\;  \stackrel{P}\longrightarrow\; \int_0^1 \sigma_s^2ds.%
\end{equation}
On the other hand, recalling the notation \eqref{e:delta_i^nX_over_In(y)}, we have
\begin{align}
\bE \big( \Delta_i^n \chi(y)\big)^2 &\leq K\bigg[ \bE \bigg(\int_{t_{i-1}}^{t_i} \gamma_t dM_t(y) \bigg)^2 + \bE\big(\Delta_i^n b(y))^2 \bigg]\notag\\
& \leq K\big( h y^{2-\alpha} + h^2(y^{1-\alpha}+1)^2\big) = O(h y^{2-\alpha}) + O(h^2).\label{e:chi_momentbound}
\end{align}
Thus,
\begin{align*}
|\mathscr C_n(y) - \tau^2_n(y)| & = \bigg| \sum_{i\in \mathcal I_n (y) }\Big[\big(\Delta_i^n (\sigma \!\cdot\! W)+ \Delta_i^n \chi(y)\big)^2 - (\Delta_i^n (\sigma \!\cdot\! W) )^2\Big]\bigg| \\
& \leq \sum_{i=1}^n\Big(\big(\Delta_i^n \chi(y)\big)^2 + 2 |\Delta_i^n \chi(y)|\Delta_i^n(\sigma \!\cdot\! W)|\Big)\\
& \leq \sum_{i=1}^n \Big(O_P(h y^{2-\alpha}) +O(h^2) + O_P(h y^{\frac{1-\alpha}{2}})\Big)\\
& =o_P(1),%
\end{align*}
where the second-to-last line follows from an application of Cauchy-Schwarz. %
 Thus,
$$
\mathscr C_n(y) - \tau^2_n(y) = o_P(1),
$$
 which, by \eqref{e:tau2_consistent0}, establishes \eqref{e:basecase}. 
 
 \medskip
For (ii), note that
  \begin{align}
    &\sqrt{n}\left(\tau^2_n(y)- C_T\right) \notag\\
    &= \sqrt{n}\left(\sum_{i=1}^n  \big( \Delta_i^n (\sigma\!\cdot\! W)\big)^2-C_T\right) - \sqrt{n}\sum_{i=1}^n  \big( \Delta_i^n (\sigma\!\cdot\! W)\big)^2{\bf 1}_{\{\Delta_i^n N(y) \neq 0\}\cup \{\Delta_i^n N' \neq 0\}}\notag\\
    &\toDistSt  \mathcal N\bigg(0,2 \int_0^1 \sigma^4_sds\bigg), \label{eq:tau12_sto}
\end{align}
where the first term of \eqref{eq:tau12_sto} converges to  $ \mathcal N\left(0,2 \int_0^1 \sigma^4_sds\right)$ stably in law by the usual CLT for realized variance and the second term of \eqref{eq:tau12_sto} converges to 0 in probability because

$$
\sqrt{n} \sum_{\Delta_i^n N' \neq 0} \big( \Delta_i^n (\sigma\!\cdot\! W)\big)^2\leq  \sqrt{n} N'_1 \max_{1\leq i\leq n}  \big( \Delta_i^n (\sigma\!\cdot\! W)\big)^2  = \sqrt{n}  \cdot O(h \log n) \to 0,
$$
and
$$
\sqrt{n} \sum_{\Delta_i^n N(y) \neq 0}  \big( \Delta_i^n (\sigma\!\cdot\! W)\big)^2\leq \sqrt{n} N_1(y) \max_{1\leq i\leq n}  \big( \Delta_i^n (\sigma\!\cdot\! W)\big)^2= \sqrt{n}\cdot O_p\left(y^{-\alpha}\right) \cdot  O(h \log n)
=o_P(1),$$
where we used that $h^{\frac{1}{2\alpha}} (\log n)^{\frac{1}{\alpha}}\ll y$. On the other hand, recalling the notation \eqref{e:delta_i^nX_over_In(y)}, we have
\begin{align*}
\sqrt{n}\big|\mathscr C_n(y) - \tau^2_n(y)\big|  & = \bigg| \sum_{\mathcal I_n(y)}\sqrt{n}\Big[\big(\Delta_i^n (\sigma \!\cdot\! W)+ \Delta_i^n \chi(y)\big)^2 - \big(\Delta_i^n (\sigma \!\cdot\! W)\big)^2\Big] \bigg|\\
& \leq 2\sqrt{n}\bigg|\sum_{\mathcal I_n(y)} \Delta_i^n (\sigma \!\cdot\! W) \Delta_i^n \chi(y)\bigg|+  \sum_{\mathcal I_n(y)} \sqrt{n}\big( \Delta_i^n \chi(y)\big)^2\\
& =: T_1 + T_2.
\end{align*}
Now, by \eqref{e:chi_momentbound},  we get
$$
\bE T_2 \leq  K n^{3/2}\big( h y^{2-\alpha} + h^2(y^{1-\alpha}+1)^2\big) \to 0,
$$
where we used that  $y \ll h^{\frac{1}{2(2-\alpha)}}$.
For $T_1$, first note
\begin{align*}
T_1  &\leq 2\sqrt{n}\bigg|\sum_{i=1}^n \Delta_i^n (\sigma \!\cdot\! W) \Delta_i^n \chi(y)\bigg| + 2\sqrt{n}\bigg|\sum_{i=1}^n \Delta_i^n (\sigma \!\cdot\! W) \Delta_i^n \chi(y)\big({\bf 1}_{\{\Delta_i^n N(y) \neq 0\}} + {\bf 1}_{\{\Delta_i^n N' \neq 0\}}\big)\bigg|\\
& = 2\sqrt{n}\bigg|\sum_{i=1}^n \Delta_i^n (\sigma \!\cdot\! W) \Delta_i^n \chi(y)\bigg|  + o_P(1),
\end{align*}
since, using \eqref{e:maxbound_chi}
\begin{align*}
\sqrt n \bigg|\sum_{\Delta_i^n N(y) \neq 0} \Delta_i^n (\sigma \!\cdot\! W) \Delta_i^n \chi(y)\bigg| &\leq \sqrt n N_1(y)\max_{i}|\Delta_i^n (\sigma \!\cdot\! W)| \max_{i}|\Delta_i^n \chi(y)| \\
& = \sqrt n ~\! O_P(y^{-\alpha}) \cdot O(\sqrt{h\log n} ) \cdot O_P(y)\\
& = o_P(y^{1-\alpha} \sqrt{\log n}) = o_P(1),
\end{align*}
and similarly
\begin{align*}
\sqrt n \bigg|\sum_{\Delta_i^n N'\neq 0} \Delta_i^n (\sigma \!\cdot\! W) \Delta_i^n \chi(y)\bigg|  \leq \sqrt n ~\!N'_1 \cdot O(\sqrt{ h \log n}) \cdot O_P(y) = O_P( y\sqrt{\log n}) \toprob 0.
\end{align*}
 Next, we show that
 \begin{equation}\label{e:T1_hardterm}
 \sqrt{n}\bigg|\sum_{i=1}^n \Delta_i^n (\sigma \!\cdot\! W) \Delta_i^n \chi(y)\bigg| = o_P(1),
\end{equation}
which will imply $T_1=o_P(1)$, and consequently that $\sqrt{n}\big(\mathscr C_n(y) - \tau^2_n(y)\big)  = o_P(1)$, and in view of \eqref{eq:tau12_sto} will establish the desired convergence. Write
\begin{align*}
\sqrt{n}\bigg|\sum_{i=1}^n \Delta_i^n (\sigma \!\cdot\! W) \Delta_i^n \chi(y)\bigg|  &\leq \sqrt{n} \bigg|\sum_{i=1}^n\Delta_i^n (\sigma \!\cdot\! W)  \int_{t_{i-1}}^{t_i} \gamma_t dM_t(y) \bigg|  + \sqrt{n}\bigg|\sum_{i=1}^n \Delta_i^n (\sigma \!\cdot\! W) \Delta_i^n b(y)\bigg|  \\
& =: T_{1,1} + T_{1,2}.
\end{align*}
Applying Lemma 2.1.5 in \cite{jacod:protter:2011}, since $y \gg h^{1/\alpha}$, we have, for every $p\geq1$
$$
\bE \bigg|   \int_{t_{i-1}}^{t_i} \gamma_t dM_t(y)\bigg|^{2p} \leq K \big(h y ^{2p-\alpha} + h^p y^{p(2-\alpha)}\big) \leq K hy ^{2p-\alpha}.
$$
This gives, for every $p,q>1$ with $p^{-1} + q^{-1}=1$,
\begin{align}
\bE T_{1,1} \leq \big(\bE T_{1,1}^2\big)^{1/2}&=\sqrt n \Bigg( \sum_{i=1}^n \bE  \bigg[(\sigma \!\cdot\! W)  \int_{t_{i-1}}^{t_i} \gamma_t dM_t(y) \bigg]^2\Bigg)^{1/2}\notag\\
&\leq \sqrt n \Bigg( \sum_{i=1}^n \big(\bE (\sigma \!\cdot\! W)^{2q}\big)^{1/q}  \bigg(\bE \bigg|\int_{t_{i-1}}^{t_i} \gamma_t dM_t(y) \bigg|^{2p}\bigg)^{1/p}\Bigg)^{1/2}\notag\\
&\leq K \sqrt n \bigg( \sum_{i=1}^n h \cdot  (h^{1/p} y^{2-\alpha/p})\bigg)^{1/2}\notag\\
 &\leq K  (h y^{-\alpha})^{\frac1{2p}} h^{-\frac12}y\label{e:T11_argument_penultimate}\\
 & \leq K h^{\frac{1}{4p}-\frac12}y,%
 \label{e:T11_argument}
\end{align}
where  for the last inequality we used that $y\gg h^{\frac1{2\alpha}}$.  Since $\frac{1}{2(2-\alpha)}>\frac{1}{4}$,  by  taking $p>1$ close enough to $1$, we have $\frac{1}{2}-\frac{1}{4p}=\frac{1}{4} + (\frac{1}{4}-\frac{1}{4p})<\frac1{2(2-\alpha)}$ giving $y\ll h^{\frac{1}{2(2-\alpha)}} \ll h^{\frac12-\frac1{4p}}$, and thus expression \eqref{e:T11_argument} tends to 0. %
For $T_{1,2}$, with $b_0(y) = \int_{|x|>y}x\nu(dx)$, and
\begin{align*}
\Delta_i^n b(y) &= h\big(b_{t_{i-1}} + \gamma_{t_{i-1}} b_0(y)\big)+ \int_{t_{i-1}}^{t_i} \big(  b_s-b_{t_{i-1}}+ b_0(y)(\gamma_s -\gamma_{t_{i-1}})\big)ds\\
&=: h b_{t_{i-1}}(y) + \widetilde b_i(y),
\end{align*}
we have
$$
T_{1,2} \leq \sqrt{n}\bigg|\sum_{i=1}^n \Delta_i^n (\sigma \!\cdot\! W) \big(h b_{t_{i-1}}(y))\bigg|  + \sqrt{n}\bigg|\sum_{i=1}^n \Delta_i^n (\sigma \!\cdot\! W) \Delta_i^n \widetilde b_i(y)\bigg|.
$$
Since
\begin{align*}
\bE \bigg(\sqrt{n}\sum_{i=1}^n \Delta_i^n (\sigma \!\cdot\! W)( h b_{t_{i-1}}(y) )\bigg)^2 = n \sum_{i=1}^n \bE( \Delta_i^n (\sigma \!\cdot\! W))^2 h^2 \bE b_{i-1}^2(y)\leq K h
\end{align*}
and
\begin{align*}
\sqrt{n}\bE \bigg|\sum_{i=1}^n \Delta_i^n (\sigma \!\cdot\! W) \Delta_i^n \widetilde b_i(y)\bigg| &\leq  \sqrt n \sum_{i=1}^n  \sqrt{\bE \big(\Delta_i^n (\sigma \!\cdot\! W)\big)^2 \bE \big( \widetilde b_i(y)\big)^2}
= o(1),
\end{align*}
we get $T_{1,1}+T_{1,2}=o_P(1)$, which implies \eqref{e:T1_hardterm} and completes the proof of (ii). 

\medskip
We now turn to (iii).  Write%
\begin{align}
\sqrt{n}\left(\mathscr C_n(y)- C_T\right) & = \sqrt{n}\left(\sum_{\mathcal I_n(y)}  \big( \Delta_i^n (\sigma\!\cdot\! W)\big)^2-C_T\right) + \sqrt{n} \sum_{\mathcal I_n(y)} \left(\Delta_i^n M(y)\right)^2+ \sqrt{n} \sum_{\mathcal I_n(y)}\left(\Delta_i^n b (y)\right)^2\notag\\
&\quad + \sqrt{n}\sum_{\mathcal I_n(y)} 2\big[\Delta_i^n M(y)\left( \Delta_i^n (\sigma\!\cdot\! W)+ \Delta_i^n b (y)\right) + \Delta_i^n b(y) \Delta_i^n (\sigma\!\cdot\! W)\big]\notag \\
&=: \sum_{i=0}^3 I_i. \label{e:fourterms}
\end{align}

For the term $I_0$,  the argument for  \eqref{eq:tau12_sto} shows 
$$I_0= \sqrt{n}\left(\sum_{\mathcal I_n(y)}\left(\Delta_i^n (\sigma\!\cdot\! W)\right)^2 -C_T\right)=O_P(1) - \sqrt{n} \sum_{i=1}^n  \big( \Delta_i^n (\sigma\!\cdot\! W)\big)^2{\bf 1}_{\{\Delta_i^n N(y) \neq 0\}}.$$
Let $\beta_i^n(y) = \left(\Delta_i^n (\sigma\!\cdot\! W)\right)^2{\bf 1}_{\{\Delta_i^n N(y) \neq 0\}}-\bE_{i-1}\left(\Delta_i^n (\sigma\!\cdot\! W)\right)^2{\bf 1}_{\{\Delta_i^n N(y) \neq 0\}}$ (with $\bE_{i-1}(\cdot)=\bE(\cdot | \mathcal F_{(i-1)h})$), and observe, for every $p\geq 1$, using \eqref{e:estimates0},
\begin{align*}
\bE_{i-1}\left(\Delta_i^n (\sigma\!\cdot\! W)\right)^{2p}{\bf 1}_{\{\Delta_i^n N(y) \neq 0\}} &= \bE_{i-1} \bE_{i-1}\Big[\left(\Delta_i^n (\sigma\!\cdot\! W)\right)^{2p}{\bf 1}_{\{\Delta_i^n N(y) \neq 0\}} \Big| \Delta_i^n N(y) \Big]\\
&\leq K \bE_{i-1} \bigg({\bf 1}_{\{\Delta_i^n N(y) \neq 0\}} \bE_{i-1}\Big[\Big(\int_{t_{i-1}}^{t_i} \sigma_s^2 ds \Big)^{p}  \Big| \Delta_i^n N(y) \Big]\bigg)\\
& \leq K h^{p+1} y^{-\alpha}.
\end{align*}
 Using that $hy^{-\alpha}\ll 1$, we obtain $\bE_{i-1}|\beta_i^n(y)|^p \leq K h^{p+1}y^{-\alpha}$, giving 
 \begin{equation}\label{e:betabound}
 \bE \bigg(\sqrt n \sum_{i=1}^n \beta_{i}^n(y)\bigg)^2 = n\sum_{i=1}^n \bE \left(\beta_{i}^n(y)\right)^2 = O(hy^{-\alpha}).
 \end{equation}
  Since $\sigma^2$ is bounded away from zero almost surely, we have
\begin{align*}
\bE_{i-1}\left(\Delta_i^n (\sigma\!\cdot\! W)\right)^2{\bf 1}_{\{\Delta_i^n N(y) \neq 0\}} &=\bE_{i-1} \bigg({\bf 1}_{\{\Delta_i^n N(y) \neq 0\}} \bE_{i-1}\Big[ \int_{t_{i-1}}^{t_i} \sigma_s^2 ds\Big  | \Delta_i^n N(y) \Big]\bigg)\\
& \geq Kh \bP(\Delta_i^n N(y) \neq 0)\sim Kh^2 y^{-\alpha}.
\end{align*}
Analogously, we have $\bE_{i-1}\left(\Delta_i^n (\sigma\!\cdot\! W)\right)^2{\bf 1}_{\{\Delta_i^n N(y) \neq 0\}}\leq K'h^2 y^{-\alpha}$, i.e., for suitable $K_0,K_1>0$,$$
 K_0 h^{1/2} y^{-\alpha}\leq \sqrt n \sum_{i=1}^n\bE_{i-1}\left(\Delta_i^n (\sigma\!\cdot\! W)\right)^2{\bf 1}_{\{\Delta_i^n N(y) \neq 0\}}\leq  K_1 h^{1/2} y^{-\alpha}.
$$
Thus, for all large $n$, from \eqref{e:betabound}, 
\begin{align}
-K_1h^{1/2} y^{-\alpha}\Big(1 + o_P\left(y^{\alpha/2}\right)\Big)+ O_P(1) \leq I_0 \leq O_P(1) - K_0h^{1/2} y^{-\alpha}\Big(1 + o_P\left(y^{\alpha/2}\right)\Big),\label{e:I0_bound}%
\end{align}
which implies $I_0 = O_P( 1 \vee h^{1/2} y^{-\alpha})$.
We now consider $I_1.$ Let
$$\xi_{j,y} = y^{-(j-\alpha)}\int_{|x|\leq y} x^j \nu(dx) =y^{-(j-\alpha)}h^{-1}\bE \left(\Delta_i^nM(y)\right)^j,\quad j=2,4,$$
(e.g., \cite[Example 25.12]{sato:1999}) which satisfy $\liminf_{n\to \infty} \xi_{j,y} >0$ under Assumption \ref{assump:Coef0}(ii).  
Observe $\Var(\sum _{i=1}^n\left(\Delta_i^nM(y)\right)^2 ) = y^{4-\alpha}\xi_{4,y} - h y^{4-2\alpha}\xi^2_{2,y} = O(y^{4-\alpha})$, and 
\begin{align*}
\sum_{i=1}^n\bE|\big(\Delta_i^nM(y)\big)^2 -\bE\big(\Delta_i^nM(y)\big)^2|^4 &\leq nK\Big(\bE\big(\Delta_i^nM(y)\big)^8  + \big[\bE\big(\Delta_i^nM(y)\big)^2\big]^4\Big)\\
&\leq K  y^{8-\alpha}  + K' h^3y^{8-4\alpha}\\
& = o((y^{4-\alpha})^{2}).
\end{align*}
Therefore, the Lyapunov CLT implies that
\begin{align*}
\frac{\sum _{i=1}^n\left(\Delta_i^nM(y)\right)^2 - n \bE\left(\Delta_i^nM(y)\right)^2 }{\sqrt {n\Var\big( (\Delta_i^nM(y)^2\big)} } &= \frac{\sum _{i=1}^n\left(\Delta_i^nM(y)\right)^2 - y^{2-\alpha}\xi_{2,y} }{\sqrt{ y^{4-\alpha}\xi_{4,y} - h y^{4-2\alpha}\xi^2_{2,y} }}\notag\\
& = \frac{\sum _{i=1}^n\left(\Delta_i^nM(y)\right)^2 - y^{2-\alpha}\xi_{2,y} }{y^{2-\frac{\alpha}{2}} \xi^{1/2}_{4,y} \sqrt{1 + o(1)} }
\toDist \mathcal{N}(0,1).%
\end{align*}
This gives
\begin{align}
    I_1& = 
    \sqrt{n}\sum_{i=1}^n\left(\Delta_i^nM(y)\right)^2{\bf 1}_{\{\Delta_i^n N(y)=0 \}} \nonumber\\
    &=  \sqrt{n} \sum_{i=1}^n \left(\Delta_i^n M(y)\right)^2 - \sqrt{n} \sum_{i=1}^n \left(\Delta_i^n M(y)\right)^2 \, {\bf 1}_{\{\Delta_i^n N(y)\neq 0\}} \label{eq:by_M_clt_0}\\
    &= h^{-1/2} \left(y^{2-\alpha}\xi_{2,y}  + y^{2-\frac{\alpha}{2}} \xi^{1/2}_{4,y} \sqrt{1 + o(1)} \cdot Y_n \right)  + o_P\left(h^{-1/2}y^{2-\alpha}\right)\label{eq:by_M_clt},
\end{align}
where $Y_n\toDist \mathcal{N}(0,1)$.   The order of the second term in \eqref{eq:by_M_clt_0} is a consequence of
\begin{align*}
 \bE\left(\frac{\sqrt{n} \sum_{i=1}^n \left(\Delta_i^n M(y)\right)^2 {\bf 1}_{\{\Delta_i^n N(y)\neq 0\}}}{h^{-1/2}y^{2-\alpha}}\right) \leq y^{\alpha-2}\, \bE\left( N_1(y)\right)\bE\left( \left(\Delta_i^n M(y)\right)^2 \right) =O\left( h y^{-\alpha} \right) = o(1),
\end{align*}
since $h^{1/\alpha} \ll y$.  Moreover,  \eqref{eq:by_M_clt} implies $I_1/(h^{-1/2} y^{2-\alpha}\xi_{2,y})\toprob 1$. %
Next, for $I_2$, observe
\begin{gather*}
     \sqrt{n}\sum_{\mathcal I_n(y)} \left( \Delta_i^n b(y)\right)^2 \leq \sqrt{n}\sum_{i=1}^n \left( \Delta_i^n b(y)\right)^2 = Kn^{3/2} h^2 y^{2-2\alpha} = o(h^{1/2}y^{-\alpha}),%
\end{gather*}
showing $I_2=o_P(I_0)$.  We now show  $I_3  =o_P(|I_0|+I_1)$. For the first term in $I_3$, \eqref{e:T11_argument_penultimate} shows, for any $p>1$,%

\begin{align*}
   \sqrt{n}\sum_{\Delta_i^n N(y) = 0} \left|\left(\Delta_i^n (\sigma\!\cdot\! W)\right)\left( \Delta_i^n M(y)\right)\right| =O_P((hy^{-\alpha})^{\frac{1}{2p}}h^{-\frac12}y),%
\end{align*}
which is smaller than $|I_0|$ when $y< h^{1/2}$ since $(hy^{-\alpha})^{\frac{1}{2p}}h^{-\frac12}y<(hy^{-\alpha})^{\frac{1}{2p}}\ll h^{1/2}y^{-\alpha}$ for $p$ close enough to 1, and smaller than $I_1$ when $y\geq h^{1/2}$, since $(hy^{-\alpha})^{\frac{1}{2p}}h^{-\frac12}y\leq (h^{1-\alpha/2})^{\frac{1}{2p}}h^{-\frac12}y\ll h^{-1/2}y^{2-\alpha} \iff (h^{1-\alpha/2})^{\frac1{2p}}\ll y^{1-\alpha}$ which holds for all $p>1$ since $\alpha>1$. 
Also, noting in this case $b_t(y)$ is deterministic, we have
\begin{align*}
    \bE\left(\sqrt{n}\sum_{\Delta_i^n N(y) = 0} \left|\left(\Delta_i^n b(y)\right) \left( \Delta_i^n M(y)\right)\right|\right)
    &\leq \sqrt{n}\sqrt{\sum_{i=1}^n \left( \Delta_i^n b(y)\right)^2 } \sqrt{\sum_{i=1}^n \bE\left( \Delta_i^n M(y)\right)^2 }\\
    &= O\left(y^{2-3\alpha/2}\right)= o_P(I_1),
\end{align*}
since $y^{2-3\alpha/2}\ll h^{-1/2}y^{2-\alpha}$ under $y\gg h^{\frac1\alpha}.$  
Similarly,
\begin{align*}
    &\bE\left( \sqrt{n}\sum_{\Delta_i^n N(y) = 0} \left|\left(\Delta_i^n (\sigma\!\cdot\! W)\right)\left( \Delta_i^n b(y)\right)\right|\right) \\
    &\leq \sqrt{n}\max_{1\leq i\leq n}\left| \Delta_i^n b(y)\right| \sum_{i=1}^n \bE\left|\Delta_i^n (\sigma\!\cdot\! W)\right|  \leq K y^{1-\alpha },%
\end{align*}
which is smaller than $|I_0|$ when $y\ll h^{1/2}$, since $y^{1-\alpha}\ll h^{1/2}y^{-\alpha}\iff y \ll h^{1/2}$ and smaller than $I_1$ when $y\gg h^{1/2}$ since $y^{1-\alpha}\ll h^{-1/2}y^{2-\alpha}\iff h^{1/2}\ll y$.  This shows in any of the cases \eqref{e:cases_for_y}, $I_3=o_P(|I_0| + I_1)$, and thus the limit of \eqref{e:fourterms} is determined by $I_0+I_1$. Turning to the first case in \eqref{e:cases_for_y}, under $h^{\frac{1}{2(2-\alpha)}} \vee h^{\frac{1}{\alpha}} \ll y\ll h^{\frac12}$, we have $I_1= o_P(|I_0|)$ (since $y\ll h^{1/2}$ gives $h^{-1/2} y^{2-\alpha}\ll h^{1/2}y^{-\alpha}$), and thus from \eqref{e:I0_bound} we have $I_0\toprob -\infty$, giving  $\sqrt{n}\left(\mathscr C_n(y)- C_T\right) \toprob -\infty$. For the second case in \eqref{e:cases_for_y}, under $y\gg h^{1/2}$ we have instead $|I_0|= o_P(I_1)$, and from \eqref{eq:by_M_clt} we have $I_1\toprob \infty$, giving $\sqrt{n}\left(\mathscr C_n(y)- C_T\right) \toprob \infty$.
\end{proof}

Below, in Propositions \ref{p:prop3_IV}  and \ref{p:prop3}, we establish the convergence in probability of the sequences $\underline C_{n}(X)$ and $\underline c_{n}(i;X)$ that are used in the proof of Proposition \ref{prop:allequal}. Recall  that for notational convenience we set $F( \Delta_i^n Y,\ldots,\Delta_{i+d-1}^n Y)=0$ when $i+d-1>n$ or $i\leq 0$.

\begin{proposition}\label{p:prop3_IV} 
Define the set
\begin{equation}\label{e:def_I^k(y)}
\mathcal I_{n}^{(d)}(y)= \{i=1,\dots,n-d+1:  \Delta_{i+\ell}^n N(y) = 0,\, \Delta_{i+\ell}^n N' = 0,\quad \ell=0,\ldots, d-1\}.
\end{equation}
Let $\widehat C_{n,0}=\widehat C_{n,0}(X)$  belong to the class $\mathcal C$ with $\widehat C_{n,0}(\cdot)$ defined as in \eqref{e:def_C0} and suppose \eqref{e:C0_consistent_cont_case} holds. With $\delta_0$ as in \eqref{e:Lipbound}, let $y=y_n\to 0$ in such a way that, for some $0<\delta<(\frac{1}{\alpha} \wedge 1)-\frac12$,  %
\begin{equation}\label{e:yn_bound_for_minorizing}
h^{(\frac{1}{\alpha} \wedge 1)-\delta}\ll y\ll h^{\frac12}(\log n)^{\frac12 - \frac1{\delta_0}}.
\end{equation} 
For $F$ as in \eqref{e:def_C0}, define,  for a generic process $Y$,
\begin{equation*}%
\underline C_{n}(Y,y) = \sum_{i \in \mathcal I_{n}^{(d)}(y) } F(\Delta_i^n Y,\ldots, \Delta_{i+d-1}^n Y).
\end{equation*}
Then, $\underline C_{n}(X,y) \leq \widehat C_{n,0}$, and
\begin{equation}\label{e:C_lower_consistent}
\underline C_{n}(X,y)\stackrel P \to \int_0^1 \sigma_s^2ds.
\end{equation}
\end{proposition}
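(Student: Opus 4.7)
The bound $\underline{C}_n(X,y)\leq \widehat C_{n,0}$ is immediate since $F\geq 0$ and $\mathcal I_n^{(d)}(y)\subseteq\{1,\dots,n-d+1\}$. For the convergence \eqref{e:C_lower_consistent}, the plan is a standard two-step comparison: first relate $\underline{C}_n(X,y)$ to its continuous analog $\underline{C}_n(\sigma\!\cdot\!W,y)$ via the Lipschitz-type bound \eqref{e:Lipbound}, and then relate $\underline{C}_n(\sigma\!\cdot\!W,y)$ to $\widehat C_{n,0}(\sigma\!\cdot\!W)$ by accounting for the few discarded indices; the assumption \eqref{e:C0_consistent_cont_case} then finishes the argument.

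For the first step, observe that for $i\in\mathcal I_n^{(d)}(y)$, none of the $d$ adjacent increments contain a ``large'' jump, so with $\chi(y)$ as in \eqref{e:delta_i^nX_over_In(y)} we have $\Delta_{i+\ell}^n X-\Delta_{i+\ell}^n(\sigma\!\cdot\!W)=\Delta_{i+\ell}^n\chi(y)$ for $\ell=0,\dots,d-1$. Setting $\mathbf{x}=(\Delta_i^n(\sigma\!\cdot\!W),\dots,\Delta_{i+d-1}^n(\sigma\!\cdot\!W))$ and $\mathbf{y}=(\Delta_i^n\chi(y),\dots,\Delta_{i+d-1}^n\chi(y))$, Lemma \ref{l:maxbound_chi} gives $\max_i\|\mathbf{y}\|_\infty=o_P(y)$, while Lemma \ref{l:mod_of_cont} gives $\max_i\|\mathbf{x}\|_\infty=O(\sqrt{h\log n})$. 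Since $y\ll h^{1/2}(\log n)^{1/2-1/\delta_0}\ll \sqrt{h\log n}$, the minimum in \eqref{e:Lipbound} is attained by $\|\mathbf y\|_\infty$, yielding
\begin{align*}
\bigl|\underline C_n(X,y)-\underline C_n(\sigma\!\cdot\!W,y)\bigr|
&\leq K\,n\,o_P(y^{\delta_0})\bigl((h\log n)^{(2-\delta_0)/2}+y^{2-\delta_0}\bigr)\\
&= o_P\bigl(y^{\delta_0}h^{-\delta_0/2}(\log n)^{(2-\delta_0)/2}\bigr)+o_P(y^2/h).
\end{align*}
The first term vanishes precisely because of the upper bound $y\ll h^{1/2}(\log n)^{1/2-1/\delta_0}$ (this is the balance the condition is designed to achieve), and the second vanishes since $y\ll h^{1/2}$.

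For the second step, the complement $\{1,\dots,n-d+1\}\setminus\mathcal I_n^{(d)}(y)$ has cardinality at most $d\bigl(N_1(y)+N_1'\bigr)$. Each discarded term is bounded via \eqref{e:F_maxbound} by $K\max_j\|\Delta_j^n(\sigma\!\cdot\!W)\|_\infty^2=O(h\log n)$, so
\[
\bigl|\widehat C_{n,0}(\sigma\!\cdot\!W)-\underline C_n(\sigma\!\cdot\!W,y)\bigr|=O_P\bigl((y^{-\alpha}+1)h\log n\bigr),
\]
which tends to $0$ since the lower bound $y\gg h^{(1/\alpha\wedge 1)-\delta}$ ensures $hy^{-\alpha}\log n\to 0$ in both regimes $\alpha\in(0,1]$ and $\alpha\in(1,2)$. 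Combining the two steps with \eqref{e:C0_consistent_cont_case} yields \eqref{e:C_lower_consistent}.

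The main obstacle is the first step: one must verify that the precise log-power appearing in the upper bound on $y$ is exactly what is needed to dominate the Lipschitz-type error, since $F$ is only assumed H\"older in the sense of \eqref{e:Lipbound} with exponent $\delta_0\in(0,2]$ rather than being globally $C^2$. The arithmetic $\frac{1}{2}-\frac{1}{\delta_0}=-\frac{2-\delta_0}{2\delta_0}$ reconciles the hypothesis with the required rate.
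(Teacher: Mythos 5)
Your proof is correct and takes essentially the same route as the paper's: the same two-step comparison (replacing $X$ by $\sigma\!\cdot\!W$ via the H\"older-type bound \eqref{e:Lipbound} together with Lemmas \ref{l:maxbound_chi} and \ref{l:mod_of_cont}, then controlling the discarded indices by $N_1(y)+N_1'$ times the maximal squared Brownian increment), with identical rate arithmetic. The only cosmetic omission is that \eqref{e:Lipbound} is assumed only when $\|\mathbf x\|\vee\|\mathbf y\|\leq 1$, so one should formally restrict to that event (which has probability tending to one), exactly as the paper does with its event $A_n$.
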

\begin{proof}
For simplicity write $\underline C_n(\cdot)=\underline C_n(\cdot,y)$. Observe $\underline C_{n}(X) \leq \widehat C_{n,0}$ is immediate, so we need only prove \eqref{e:C_lower_consistent}. 
Clearly \begin{align*}
& \sum_{i=1}^n F\big( \Delta_i^n (\sigma\!\cdot\! W),\ldots, \Delta_{i+d-1}^n (\sigma\!\cdot\! W)\big) -\underline C_n\big(\sigma\!\cdot\! W\big) \\
 & =  \sum_{i=1}^n F\big( \Delta_i^n (\sigma\!\cdot\! W),\ldots, \Delta_{i+d-1}^n (\sigma\!\cdot\! W)\big)\Big(\sum_{\ell=0}^{d-1}({\bf 1}_{\{\Delta_{i+\ell}^n N(y) \neq 0\}} + {\bf 1}_{\{\Delta_{i+\ell}^n N' \neq 0\}}\big)\Big)\\
 &=:T_1+T_2.
\end{align*}
Note that, using \eqref{e:estimates0}, for each $i$,
\begin{equation*}%
\bP\left( \Delta_i^n N(y) \neq 0\right)  \leq K h y^{-\alpha} =o\Big(\frac{1}{\log n}\Big),
\end{equation*}
since $h(\log n) y^{-\alpha} = o(1)$ by \eqref{e:yn_bound_for_minorizing}. In particular,
$$
\bE \sum_{i=1}^{n}{\bf 1}_{\{\Delta_{i}^n N(y) \neq 0\}}\leq K y^{-\alpha}.
$$
Using \eqref{e:F_maxbound}, this gives%
\begin{align*}
T_1
& \leq K\max ( \Delta_i^n (\sigma\!\cdot\! W))^2\sum_{i=1}^n\sum_{\ell=0}^{d-1}{\bf 1}_{\{\Delta_{i+\ell}^n N(y) \neq 0\}}\\
&\leq K\max ( \Delta_i^n (\sigma\!\cdot\! W))^2\sum_{i=1}^n {\bf 1}_{\{\Delta_{i}^n N(y) \neq 0\}}\\
&= K\frac{\max ( \Delta_i^n (\sigma\!\cdot\! W))^2}{h\log n} O_P\big(hy^{-\alpha} \log n \big)\\
&=O_P(1)\cdot o_P(1).
\end{align*}
Similarly, again using \eqref{e:F_maxbound},
$$
T_2=\sum_{i=1}^n F\big( \Delta_i^n (\sigma\!\cdot\! W),\ldots, \Delta_{i+d-1}^n (\sigma\!\cdot\! W)\big)\sum_{\ell=0}^{d-1}{\bf 1}_{\{\Delta_{i+\ell}^n N' \neq 0\}} \leq K N'_1 \max ( \Delta_i^n (\sigma\!\cdot\! W))^2 =o_P(1).
$$
Thus,
\begin{equation}\label{e:tau2_F-mainF_to_0}
\sum_{i=1}^n F\big( \Delta_i^n (\sigma\!\cdot\! W),\ldots, \Delta_{i+d-1}^n (\sigma\!\cdot\! W)\big) -\underline C_n\big(\sigma\!\cdot\! W\big) =o_P(1).
\end{equation}
Using assumption \eqref{e:C0_consistent_cont_case} together with \eqref{e:tau2_F-mainF_to_0}, we obtain%
\begin{equation} \label{e:tau2_consistent} 
\underline C_n\big(\sigma\!\cdot\! W\big)\stackrel P \to \int_0^1 \sigma_s^2ds. %
\end{equation}
On the other hand, using the notation \eqref{e:delta_i^nX_over_In(y)}, we have $\Delta_i^nX=\Delta_i^n (\sigma \cdot W)+ \Delta_i^n \chi(y)$. Furthermore,  under condition \eqref{e:yn_bound_for_minorizing}, we have, from  Lemma \eqref{l:maxbound_chi}  $\max_{i\in \mathcal I^{(d)}_n(y)} \big|\Delta_i^n \chi(y)\big| = o_P(y)=o_P(1)$. Also, Lemma \ref{l:mod_of_cont}  gives $\max_{i\in  \mathcal I^{(d)}_n} (y) \big|\Delta_i^n (\sigma \!\cdot\! W) \big|\leq \max_{1\leq i \leq n} \big|\Delta_i^n (\sigma \!\cdot\! W) \big| = O_P(\sqrt{h\log n})=o_P(1)$, so that the event
$$
A_n=\left\{\max_{i\in \mathcal I_n (y)} \big|\Delta_i^n (\sigma \!\cdot\! W) \big| \vee \max_{i\in \mathcal I^{(d)}_n (y)} \big|\Delta_i^n \chi(y)\big| \leq 1\right\}
$$
tends to 1 in probability.  Thus, from condition \eqref{e:Lipbound}, we obtain%
\begin{align}
\nonumber
&|\underline C_n(X) -\underline C_n\big(\sigma\!\cdot\! W\big)|\left({\bf 1}_{A_n} + {\bf 1}_{A_n^c}\right)\\
\nonumber
 & = \bigg| \sum_{i\in \mathcal I^{(d)}_n(y)} \Big[F\Big(\Delta_i^n X\ldots,\Delta_{i+d-1}^n X\Big) - F\Big(\Delta_i^n (\sigma \!\cdot\! W),\ldots,\Delta_{i+d-1}^n (\sigma \!\cdot\! W) \Big)\Big]\bigg|{\bf 1}_{A_n} +o_P(1)\\
 \nonumber
& \leq n K\bigg( \max_{i\in \mathcal I_n (y)} \big|\Delta_i^n \chi(y)\big|\bigg)^{\delta_0}\left(\bigg( \max_{i\in \mathcal I_n (y)} \big|\Delta_i^n \chi(y) \big|\bigg)^{2-\delta_0} + \bigg( \max_{i\in \mathcal I_n (y)} \big|\Delta_i^n (\sigma \!\cdot\! W) \big|\bigg)^{2-\delta_0}\right)+o_P(1)\\
& = n\,O_P (y^{\delta_0})\cdot \bigg(O_P(y^{2-\delta_0})+O_P\Big((h \log n)^{\frac{2-{\delta_0}}{2}}\Big)\bigg)  +o_P(1)\nonumber\\
& = O_P(ny^2)+ O_P\Big( y^{\delta_0}n^{\frac{\delta_0}{2}}(\log n)^{1-\frac{\delta_0}{2}}\Big) +o_P(1)\nonumber\\
&=o_P(1),\label{e:max_op(1)_lipbound}
 \end{align}
where the last line follows from condition \eqref{e:yn_bound_for_minorizing}.
Thus,
$$
\underline C_n(X) - \underline C_n\big(\sigma\!\cdot\! W\big) = o_P(1),
$$
 which by \eqref{e:tau2_consistent} establishes \eqref{e:C_lower_consistent}. 
\end{proof}

\begin{proposition}\label{p:prop3} Let $\widehat c_{n,0}(i)=\widehat c_{n}(i;X)$, $i=1,\ldots,n$ belong to class $\mathcal C^{\text{spot}}$ with $\widehat c_{n}(i;\cdot)$ of the form \eqref{e:def_little_C0}.  For each $y>0$, with $\mathcal I_{n}^{(k)}(y) $  as in \eqref{e:def_I^k(y)},  %
define, for a generic process $Y$,
\begin{equation*}%
\underline c_{n}(i;Y,y) = \frac{n}{k_n} \sum_{\ell= i-k_n/2+1}^{i+k_n/2} F(\Delta_\ell^n Y,\ldots,\Delta_{\ell+d-1}^n Y){\bf 1}_{\{\Delta_{\ell+m}^n N(y) = 0,\, \Delta_{\ell+m}^n N' = 0,~m=0,\ldots,d-1\}}.
\end{equation*}
Then, $ \underline c_{n}(i;X,y)\leq\widehat c_{n,0}(i) $, and if  $y=y_n\to 0$ as in \eqref{e:yn_bound_for_minorizing}, and $(\log n)^3\ll k_n \ll n$, 
\begin{equation}\label{e:littlec_lower_consistent}
 \max_{1\leq i \leq n}\frac{\sup_{t\in [\frac{i-k_n/2}{n}, \frac{i+k_n/2}n)} \sigma_t^2 }{\underline c_{n,0}(i;X,y)} \stackrel P \longrightarrow 1.
\end{equation}
In particular, if for some $0<a<1$, $n^a \ll k_n\ll n$, then the variables $\mathscr{c}_n(i;y)$ defined in \eqref{AnlgDfnc} satisfy
\begin{equation}\label{e:littlec_oracle_consistent}
 \max_{1\leq i \leq n}\frac{\sup_{t\in [\frac{i-k_n/2}{n}, \frac{i+k_n/2}n)} \sigma_t^2 }{\mathscr c_{n}(i;y)} \stackrel P \longrightarrow 1.
\end{equation}
\end{proposition}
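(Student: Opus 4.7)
The pointwise inequality $\underline c_{n}(i;X,y)\le\widehat c_{n,0}(i)$ is immediate from \eqref{e:def_little_C0}, since $\underline c_n(i;X,y)$ is obtained from $\widehat c_{n,0}(i)$ by multiplying each summand by an indicator bounded by $1$. The oracle statement \eqref{e:littlec_oracle_consistent} then follows from \eqref{e:littlec_lower_consistent} by specializing to $F(\mathbf x)=x_1^{2}$ and $d=1$: in this case $\widehat c_{n,0}(i;X)=(hk_n)^{-1}\sum_{\ell}(\Delta_\ell^{n}X)^{2}$ belongs to $\mathcal C^{\text{spot}}$ by Remark \ref{CmntInitCa2}, and the stronger requirement $n^{a}\ll k_n$ (arising from the verification of \eqref{e:little_c0_consistent_contcase} for the localized realized variance in Lemma \ref{l:untruncated_strong_consistency}) implies in particular $(\log n)^{3}\ll k_n$.

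For \eqref{e:littlec_lower_consistent}, setting $I_{n,i}:=[(i-k_n/2)/n,(i+k_n/2)/n)$, I would decompose
\begin{equation*}
\frac{\sup_{t\in I_{n,i}}\sigma_t^{2}}{\underline c_{n}(i;X,y)}=A_{n,i}\cdot B_{n,i},\quad A_{n,i}:=\frac{\sup_{t\in I_{n,i}}\sigma_t^{2}}{(hk_n)^{-1}\int_{I_{n,i}}\sigma_s^{2}\,ds},\quad B_{n,i}:=\frac{(hk_n)^{-1}\int_{I_{n,i}}\sigma_s^{2}\,ds}{\underline c_{n}(i;X,y)},
\end{equation*}
and establish $\max_i|A_{n,i}-1|\stackrel{P}{\to}0$ and $\max_i|B_{n,i}-1|\stackrel{P}{\to}0$. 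For $B_{n,i}$, my plan is to emulate, uniformly in $i$, the three-step scheme used in Proposition \ref{p:prop3_IV}: (i) replace $X$ by $\sigma\cdot W$ via the Lipschitz bound \eqref{e:Lipbound} combined with $\max_i|\Delta_i^{n}\chi(y)|=o_P(y)$ (Lemma \ref{l:maxbound_chi}) and $\max_i|\Delta_i^{n}(\sigma\cdot W)|=O_P(\sqrt{h\log n})$ (Lemma \ref{l:mod_of_cont}), whose aggregate contribution is $O_P(ny^{2}+ny^{\delta_0}(h\log n)^{(2-\delta_0)/2})=o_P(1)$ under \eqref{e:yn_bound_for_minorizing}; (ii) drop the jump indicators, reducing $\underline c_n(i;\sigma\cdot W,y)$ to the untruncated $\widehat c_{n,0}(i;\sigma\cdot W)$; (iii) invoke the class $\mathcal C^{\text{spot}}$ hypothesis \eqref{e:little_c0_consistent_contcase}.

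Step (ii) is what I expect to be the main obstacle, as it demands uniform-in-$i$ control of the sliding-window count $M_{n,i}(y):=\#\{\ell\in I_{n,i}:\Delta_{\ell+m}^{n}N(y)\neq 0\text{ or }\Delta_{\ell+m}^{n}N'\neq 0\text{ for some }m\in\{0,\dots,d-1\}\}$. A Bernstein/Poisson-type deviation inequality for jump counts in windows of length $k_n/n$, combined with $\bE N_1(y)=O(y^{-\alpha})$, should yield $\max_i M_{n,i}(y)=O_P(k_nhy^{-\alpha}+\log n)$; multiplying by the factor $(\log n)/k_n$ coming from $\max_\ell|\Delta_\ell^{n}(\sigma\cdot W)|^{2}$ produces an overall error of order $O_P(hy^{-\alpha}\log n+(\log n)^{2}/k_n)$, which is $o_P(1)$ precisely under \eqref{e:yn_bound_for_minorizing} (forcing $hy^{-\alpha}\log n\to 0$) and $(\log n)^{3}\ll k_n$. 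For the factor $A_{n,i}$, I would appeal to the c\`adl\`ag regularity of $\sigma$ and the localized bounds $0<\inf_t\sigma_t\le\sup_t\sigma_t<\infty$: as the window length $k_n/n\to 0$, a standard modulus-of-continuity argument on $\sigma^{2}$ gives $\max_i|A_{n,i}-1|\to 0$ in probability, with the (at most countably many) jump times of $\sigma$ contributing only through an asymptotically negligible set of indices.
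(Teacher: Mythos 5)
Your proposal is correct and follows essentially the same route as the paper's proof: the nonnegativity of $F$ for the pointwise bound, the Lipschitz replacement of $X$ by $\sigma\cdot W$ via Lemmas \ref{l:maxbound_chi} and \ref{l:mod_of_cont}, uniform concentration of the sliding-window jump count (the paper uses Hoeffding plus a union bound under $k_n\gg(\log n)^3$, which is interchangeable with your Bernstein bound), the class-$\mathcal C^{\text{spot}}$ hypothesis, and the c\`adl\`ag sup-versus-average comparison of Lemma \ref{l:untruncated_strong_consistency}; your multiplicative $A_{n,i}B_{n,i}$ split is just a cosmetic repackaging of the paper's additive differences combined with $\inf_t\sigma_t^2>0$. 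The deduction of \eqref{e:littlec_oracle_consistent} from \eqref{e:littlec_lower_consistent} via the localized realized variance also matches the paper.
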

\begin{proof} For simplicity write $ \underline c_{n}(i;\cdot)= \underline c_{n}(i;\cdot,y).$ Since $F$ is nonnegative,  $ \underline c_{n}(i;X)\leq\widehat c_{n,0}(i) $ is immediate, so it remains to show \eqref{e:littlec_lower_consistent}.
Consider
\begin{equation*}%
\underline c_{n,0}\big( \ell;\sigma\!\cdot\! W\big) = \frac{n}{k_n} \sum_{i=\ell-k_n/2+1}^{\ell +k_n/2}F\big(\Delta_i^n (\sigma\!\cdot\! W),\ldots,\Delta_{i+d-1}^n (\sigma\!\cdot\! W)\big){\bf 1}_{\{\Delta_{i+m}^n N(y) = 0,\, \Delta_{i+m}^n N' = 0,~0\leq m\leq d-1\}}
\end{equation*}
We first show $\max_{1\leq \ell \leq n}\big|\underline c_{n,0}\big(\ell; \sigma\!\cdot\! W\big)-  \sup_{t\in [\frac{\ell-k_n/2}{n}, \frac{\ell+k_n/2}n)} \sigma_t^2\big|\toprob 0$. Note
\begin{align}\label{e:zeta_n(y)_convergence}
 0&\leq \max_{1\leq \ell \leq n}\Big(\widehat c_{n,0}(\ell;\sigma\!\cdot\! W)- \underline c_{n,0}\big( \ell;\sigma\!\cdot\! W\big)\Big)\\\nonumber
 &\leq \max_{1 \leq \ell \leq n }\frac{n}{k_n}\sum_{i=\ell-k_n/2+1}^{\ell+k_n/2} F\big( \Delta_i^n (\sigma\!\cdot\! W),\ldots,\Delta_{i+d-1}^n (\sigma\!\cdot\! W)\big)\Big(\sum_{m=0}^{d-1}({\bf 1}_{\{\Delta_{i+m}^n N(y) \neq 0\}} + {\bf 1}_{\{\Delta_{i+m}^n N' \neq 0\}}\Big). %
\end{align}
Using \eqref{e:F_maxbound}, we have
\begin{align}
& \max_{1 \leq \ell \leq n }\frac{n}{k_n}\sum_{i=\ell-k_n/2+1}^{\ell+k_n/2} F\big( \Delta_i^n (\sigma\!\cdot\! W),\ldots,\Delta_{i+d-1}^n (\sigma\!\cdot\! W)\big) \sum_{m=0}^{d-1}{\bf 1}_{\{\Delta_{i+m}^n N(y) \neq 0\}}\notag\\
 & \leq K\max ( \Delta_i^n (\sigma\!\cdot\! W))^2 \max_{1 \leq \ell \leq n }\frac{n}{k_n}\sum_{i=\ell-k_n/2+1}^{\ell+k_n/2}\sum_{m=0}^{d-1}{\bf 1}_{\{\Delta_{i+m}^n N(y) \neq 0\}}\notag\\
&= K\frac{\max ( \Delta_i^n (\sigma\!\cdot\! W))^2}{h\log n} \max_{1 \leq \ell \leq n }\frac{\log n}{k_n}\sum_{i=\ell-k_n/2+1}^{\ell+k_n/2}{\bf 1}_{\{\Delta_{i}^n N(y) \neq 0\}}\notag\\ %
&=O_P(1)\cdot  \max_{1 \leq \ell \leq n }\frac{\log n}{k_n}\sum_{i=\ell-k_n/2+1}^{\ell+k_n/2}{\bf 1}_{\{\Delta_{i+m}^n N(y) \neq 0\}}.\label{e:unif_max_bound1}
\end{align}
Recalling from \eqref{e:estimates0} that $p_n(y): = \bP(\Delta_i^n N(y) \neq 0)\sim K hy^{-\alpha}=o(1/(\log n))$ due to \eqref{e:yn_bound_for_minorizing}, we have
\begin{align*}
&\max_{1\leq \ell \leq n}\frac{ \log n}{k_n} \sum_{i= (\ell-k_n/2+ 1) }^{(\ell+k_n/2)}{\bf 1}_{\{\Delta_i^n N(y) \neq 0 \}} \\
 &\quad = o_P(1)+ \max_{0 \leq \ell \leq n }\frac{ \log n}{k_n} \Big|\sum_{i= (\ell-k_n/2+ 1)\vee 1}^{(\ell+k_n/2)\wedge n}\big({\bf 1}_{\{\Delta_i^n N(y) \neq 0 \}}-p_n(y)\big)\Big|.
\end{align*}
Hoeffding's inequality together with a union bound give
\begin{align*}
\bP\bigg( \max_{1 \leq \ell \leq  n }\frac{\log n}{k_n} & \bigg|\sum_{i= (\ell-k_n/2+ 1)\vee 1}^{(\ell+k_n/2)\wedge n}\big({\bf 1}_{\{\Delta_i^n N(y) \neq 0\}}-p_n(y)\big)\bigg|  \geq \delta \bigg)\\
&\leq n \bP\bigg( \bigg| \sum_{i= (\ell-k_n/2+ 1)\vee 1}^{(\ell+k_n/2)\wedge n}\big({\bf 1}_{\{\Delta_i^n N(y) \neq 0\}}-p_n(y)\big)\bigg|  \geq \frac{\delta k_n }{\log n}\bigg)\\
&\leq 2 n\exp\big(-2\delta^2 k_n/\log^2(n)\big)\\
&\leq 2 n\exp\big(-3\log n \big) = o(1),
\end{align*}
since for large enough $n$, $2\delta^2 k_n/\log^2(n) \gg 3\log n$  due to the assumption $k_n\gg (\log n)^3$.  Hence from \eqref{e:unif_max_bound1}, we obtain
\begin{equation}\label{e:little_c_orc_cont_bound1}
 \max_{1 \leq \ell \leq n }\frac{n}{k_n}\sum_{i=\ell-k_n/2+1}^{\ell+k_n/2} F\big( \Delta_i^n (\sigma\!\cdot\! W),\ldots,\Delta_{i+d-1}^n (\sigma\!\cdot\! W)\big) \sum_{m=0}^{d-1}{\bf 1}_{\{\Delta_{i+m}^n N(y) \neq 0\}}=o_P(1).
\end{equation}
Turning to the second term  in \eqref{e:zeta_n(y)_convergence}, since  $\sum_{i=\ell-k_n/2+1}^{\ell+k_n/2} \sum_{m=0}^{d-1}{\bf 1}_{\{\Delta_{i+m}^n N' \neq 0\}}\leq  d N'_1$,  we have,%
\begin{align}
&\max_{1 \leq \ell \leq n }\frac{n}{k_n}\sum_{i=\ell-k_n/2+1}^{\ell+k_n/2} F\big( \Delta_i^n (\sigma\!\cdot\! W),\ldots,\Delta_{i+d-1}^n (\sigma\!\cdot\! W)\big)\sum_{ m=0}^{d-1}{\bf 1}_{\{\Delta_{i+ m}^n N'\neq 0\}}\notag\\
&  \leq K \max_{1\leq i \leq n} \big( \Delta_i^n (\sigma\!\cdot\! W)\big)^2  \frac{n}{k_n}\cdot  d N_1'\notag\\
& = O_P(h \log n) \cdot \frac{n}{k_n} \cdot O_P(1) = o_P(1).\label{e:little_c_orc_cont_bound2}
\end{align}
Putting together \eqref{e:little_c_orc_cont_bound1} and \eqref{e:little_c_orc_cont_bound2}, by \eqref{e:zeta_n(y)_convergence}, we obtain
\begin{align*}
\max_{1\leq \ell \leq n}\Big(\widehat c_{n,0}(\ell;\sigma\!\cdot\! W)- \underline c_{n,0}\big( \ell;\sigma\!\cdot\! W\big)\Big) = o_P(1).
\end{align*}
Thus, from \eqref{e:little_c0_consistent_contcase}, we obtain
\begin{align}\label{e:c_under_contcase_consistent}
&\max_{1\leq \ell \leq n}\Big|\underline c_{n,0}\big(\ell; \sigma \!\cdot\! W\big)-  \sup_{t\in [\frac{\ell-k_n/2}{n}, \frac{\ell+k_n/2}n)} \sigma_t^2\Big|\\
&\quad =o_P(1) + \max_{1\leq i \leq n}\bigg|  \widehat c_{n,0}(i;\sigma\!\cdot\! W) - \frac{ n }{k_n} \int_{\frac{i-k_n/2}{n}}^{\frac{i+k_n/2}n} \sigma_t^2dt  \bigg|\notag\\
& \qquad \qquad\qquad +\max_{1\leq i \leq n}\bigg| \frac{ n }{k_n} \int_{\frac{i-k_n/2}{n}}^{\frac{i+k_n/2}n} \sigma_t^2dt - \sup_{t\in [\frac{i-k_n/2}{n}, \frac{i+k_n/2}n)} \sigma_t^2  \bigg| \notag\\
&\quad= o_P(1),\notag%
\end{align}
where the last term term on the second line tends to 0 due to \eqref{e:sup_integral_cadlag_}.
On the other hand, 
we have
\begin{align*}
&\max_{1\leq \ell \leq n}\big|\underline c_{n,0}\big(\ell; X\big)-\underline c_{n,0}\big(\ell; \sigma\! \cdot \!W\big) \big|\\
& = \bigg| \max_{1\leq \ell \leq n}\frac{n}{k_n} \sum_{i= \ell-k_n/2+1}^{\ell+k_n/2} \Big[F\Big(\Delta_i^n X\ldots,\Delta_{i+d-1}^n X\Big) - F\Big(\Delta_i^n (\sigma \!\cdot\! W),\ldots,\Delta_{i+d-1}^n (\sigma \!\cdot\! W) \Big)\Big]{\bf 1}_{\{i\in \mathcal I_n^{(d)}(y)\}}\bigg|\\
&\leq n \max_{i\in \mathcal I^{(d)}_n (y)}  \Big|F\Big(\Delta_i^n X\ldots,\Delta_{i+d-1}^n X\Big) - F\Big(\Delta_i^n (\sigma \!\cdot\! W),\ldots,\Delta_{i+d-1}^n (\sigma \!\cdot\! W) \Big)\Big|\\
&=o_P(1),
\end{align*}
where the last line follows by repeating the argument leading to \eqref{e:max_op(1)_lipbound}.
Thus, using \eqref{e:c_under_contcase_consistent}, %
\begin{align*}
\max_{1\leq \ell \leq n}&\Big|\underline c_{n,0}\big(\ell; X\big) -\sup_{t\in [\frac{\ell-k_n/2}{n}, \frac{\ell+k_n/2}n)} \sigma_t^2\Big| \\&\leq\max_{1\leq \ell \leq n}\big|\underline c_{n,0}\big( \ell;X\big)-\underline c_{n,0}\big( \ell;\sigma\! \cdot \!W\big) \big|+ \max_{1\leq \ell \leq n}\Big|\underline c_{n,0}\big(\ell; \sigma\! \cdot \!W\big)- \sup_{t\in [\frac{\ell-k_n/2}{n}, \frac{\ell+k_n/2}n)} \sigma_t^2\Big| =o_P(1),
\end{align*}
 which establishes \eqref{e:littlec_lower_consistent}. For the statement \eqref{e:littlec_oracle_consistent},  since $n^a\ll k_n \ll n$,  Lemma \ref{l:untruncated_strong_consistency} implies the initialization $\widehat c_{n,0}$ as in \eqref{e:def_little_C0} with $F(x)=x^2$ belongs to class $\mathcal C^{\text{spot}}$. Hence, \eqref{e:littlec_oracle_consistent} follows from \eqref{e:littlec_lower_consistent}.
\end{proof}
\begin{lemma}\label{l:untruncated_strong_consistency} Suppose for some $0<a<1$, $n^a\ll k_n \ll n$. Then,%
\begin{equation}\label{e:maxVol_asconverge}
\max_{\frac{k_n}{2}< \ell \leq n-\frac{k_n}{2}}\Bigg| \frac{n}{k_n} \sum_{i=\ell-\frac{k_{n}}{2}}^{\ell +\frac{k_{n}}{2}} \big(\Delta_i^n(\sigma\!\cdot\! W)\big)^2 - \sup_{t\in [\frac{\ell-\frac{k_{n}}{2}}{n}, \frac{\ell+\frac{k_{n}}{2}}n)} \sigma_t^2\Bigg|  =o_P(1).
\end{equation}
\end{lemma}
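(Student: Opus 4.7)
The plan is to split the quantity into a martingale-difference fluctuation and a deterministic bias, and to control each piece uniformly in $\ell$.

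Writing $M_t := (\sigma\cdot W)_t$ and $\xi_i := (\Delta_i^n M)^2 - \int_{t_{i-1}}^{t_i}\sigma_s^2\,ds$, Itô's formula applied to $(M_t - M_{t_{i-1}})^2$ yields the representation
\begin{equation*}
\xi_i = 2\int_{t_{i-1}}^{t_i}(M_s - M_{t_{i-1}})\sigma_s\,dW_s,
\end{equation*}
so $\{\xi_i\}$ is a martingale-difference sequence relative to $(\mathcal F_{t_i})_{i\geq 0}$. Under the standing localization $|\sigma|\leq K$, the BDG inequality applied to this stochastic integral, together with the standard moment bound $\bE|M_s - M_{t_{i-1}}|^{p}=O((s-t_{i-1})^{p/2})$, will give $\bE|\xi_i|^p = O(h^p)$ for every $p\geq 2$.

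Next I would decompose
\begin{equation*}
\frac{n}{k_n}\sum_{i=\ell-k_n/2}^{\ell+k_n/2}(\Delta_i^n M)^2 - \sup_{t\in I_\ell}\sigma_t^2 = \frac{n}{k_n}\sum_{i=\ell-k_n/2}^{\ell+k_n/2}\xi_i + \bigg(\frac{n}{k_n}\int_{I_\ell}\sigma_s^2\,ds - \sup_{t\in I_\ell}\sigma_t^2\bigg),
\end{equation*}
with $I_\ell := [(\ell-k_n/2)/n,\,(\ell+k_n/2)/n)$. The deterministic bracket is already uniformly $o_P(1)$ in $\ell$ by the càdlàg averaging estimate \eqref{e:sup_integral_cadlag_} invoked in the proof of Proposition \ref{p:prop3}, so the task reduces to showing $\max_\ell |(n/k_n)\sum_i\xi_i|=o_P(1)$.

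For that, I would apply the discrete Burkholder inequality together with the power-mean bound $(\sum \xi_i^2)^{p/2}\leq k_n^{p/2-1}\sum|\xi_i|^p$ to obtain, for any $p\geq 2$,
\begin{equation*}
\bE\bigg|\sum_{i=\ell-k_n/2}^{\ell+k_n/2}\xi_i\bigg|^p \leq C_p\, k_n^{p/2-1}\sum_i \bE|\xi_i|^p = O(k_n^{p/2}h^p),
\end{equation*}
so $\bE|(n/k_n)\sum_i \xi_i|^p = O((nh)^p k_n^{-p/2}) = O(k_n^{-p/2})$ using $nh=1$. Markov's inequality combined with a union bound over the $O(n)$ admissible values of $\ell$ then gives
\begin{equation*}
\bP\bigg(\max_\ell\bigg|\frac{n}{k_n}\sum_{i=\ell-k_n/2}^{\ell+k_n/2}\xi_i\bigg| > \delta\bigg) \lesssim \delta^{-p}\, n\, k_n^{-p/2} \lesssim \delta^{-p}\, n^{1-ap/2},
\end{equation*}
which vanishes once $p>2/a$. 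The triangle inequality then yields \eqref{e:maxVol_asconverge}.

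The main obstacle is the uniform-in-$\ell$ control: one is forced to take $p$ growing inversely with $a$ in the Burkholder moment bound in order for the union bound over $\ell$ to survive, and this is the single place where the polynomial growth hypothesis $n^a\ll k_n$ gets used. Everything else is either a direct application of Itô / BDG or a reduction to the already-established càdlàg averaging estimate.
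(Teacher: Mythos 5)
Your proposal is correct and follows essentially the same route as the paper: Itô's formula to isolate the martingale fluctuation from $\int_{t_{i-1}}^{t_i}\sigma_s^2\,ds$, a Burkholder/BDG moment bound of order $k_n^{-p/2}$ uniform in $\ell$, a union bound over the $O(n)$ windows, and the choice $p>2/a$. The only caveat is that the c\`adl\`ag averaging estimate \eqref{e:sup_integral_cadlag_} you cite as already established is in fact proved \emph{inside} this lemma's proof in the paper (and only invoked later in Proposition \ref{p:prop3}), so a self-contained argument would still need to supply that short c\`adl\`ag partition argument rather than defer to it.
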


\begin{proof} 
Throughout we set $\underline{t}_{\ell}=\frac{\ell-\frac{k_{n}}{2}}{n}$ and $\bar{t}_{\ell}=\frac{\ell+\frac{k_{n}}{2}}{n}$. Since $\sigma$ is c\`adl\`ag, we claim
 \begin{equation}\label{e:sup_integral_cadlag_}
 \max_{\frac{k_n}{n}< \ell \leq n-\frac{k_n}{2}}\bigg(\sup_{t\in [\underline{t}_{\ell},\bar{t}_{\ell})} \sigma_t^2-\frac{n}{k_n}\int_{\underline{t}_{\ell}}^{\bar{t}_{\ell}} \sigma^2_s ds\bigg) \to 0\quad \text{a.s.}
 \end{equation}
 Indeed, to show \eqref{e:sup_integral_cadlag_},  for each $\omega$ and any $\delta>0$, we can find a $\nu=\nu(\omega,\delta)$ and $0=s_0<s_1(\omega)<\ldots<s_\nu(\omega)=1$ such that
\begin{equation}\label{e:cadlag_uniform_bound}
\sup_{s,t \in[s_{i-1},s_i)}|\sigma^2_s-\sigma_t^2| <\delta,\qquad i=1,\ldots,\nu
\end{equation}
(see, e.g., p.122 of \cite{billingsley:1999}). 
So, take $ n_0(\omega)$ large enough so that $n\geq n_0(\omega)$ gives
$$
\frac{k_n}{n}<\min \frac{s_i-s_{i-1}}2.%
$$
Then for each $\ell$, there is at most one $s_j$ with $\underline{t}_{\ell}\leq s_j<\bar{t}_{\ell}$.  For any such $\ell$, we have
\begin{align*}
&\sup_{t\in [\underline{t}_{\ell}, \bar{t}_{\ell})} \sigma_t^2-\frac{n}{k_n}\int_{\underline{t}_{\ell}}^{\bar{t}_{\ell}} \sigma^2_s ds \\
&=\max\bigg\{\sup_{t\in [\underline{t}_{\ell}, s_j)}\sigma_t^2-\frac{n}{k_n}\int_{\underline{t}_{\ell}}^{\bar{t}_{\ell}} \sigma^2_s ds,~ \sup_{t\in [s_j,\bar{t}_{\ell})}\sigma^2_t-\frac{n}{k_n}\int_{\underline{t}_{\ell}}^{\bar{t}_{\ell}} \sigma^2_s ds\bigg\}\\
&\leq \max\bigg\{\sup_{t\in [\underline{t}_{\ell}, s_j)}\sigma_t^2-\frac{n}{k_n}\int_{\underline{t}_{\ell}}^{\bar{t}_{\ell}} \sigma^2_s {\bf 1}_{\{s\in [\underline{t}_{\ell}, s_j)\}}ds, ~\sup_{t\in [s_j,\bar{t}_{\ell})}\sigma^2_t-\frac{n}{k_n}\int_{\underline{t}_{\ell}}^{\bar{t}_{\ell}}  \sigma^2_s {\bf 1}_{\{s\in [s_j, \bar{t}_{\ell})\}} ds\bigg\}\\
&= \max\bigg\{\frac{n}{k_n}\int_{\underline{t}_{\ell}}^{\bar{t}_{\ell}} \big(\sup_{t\in [\underline{t}_{\ell}, s_j)}\sigma_t^2-\sigma^2_s{\bf 1}_{\{s\in [\underline{t}_{\ell}, s_j)\}}\big) ds,~\frac{n}{k_n}\int_{\underline{t}_{\ell}}^{\bar{t}_{\ell}}  \big(\sup_{t\in [s_j,\bar{t}_{\ell})}\sigma^2_t-\sigma^2_s {\bf 1}_{\{s\in [s_j, \bar{t}_{\ell})\}}\big)ds\bigg\}\\
& \leq \max\Big\{ \sup_{t,s\in [\underline{t}_{\ell}, s_j)}|\sigma_t^2-\sigma^2_s|, \sup_{t\in [s_j,\bar{t}_{\ell})}|\sigma_t^2-\sigma^2_s|\Big\} < \delta.
\end{align*}
For  each remaining $\ell$, we have $[\underline{t}_{\ell}, \bar{t}_{\ell})\subseteq [s_{j-1},s_j)$ for some $j$, giving, for each such $\ell$,
$$
0\leq\sup_{t\in [\underline{t}_{\ell}, \bar{t}_{\ell})} \sigma_t^2-\frac{n}{k_n}\int_{\underline{t}_{\ell}}^{\bar{t}_{\ell}} \sigma^2_s ds \leq \sup_{t,s\in [s_{j-1},s_j)}|\sigma_t^2-\sigma^2_s| <\delta.
$$
Thus, $\max_{\frac{k_n}{n}< \ell \leq n-\frac{k_n}{2}}\bigg(\sup_{t\in [\underline{t}_{\ell},\bar{t}_{\ell})} \sigma_t^2-\frac{n}{k_n}\int_{\underline{t}_{\ell}}^{\bar{t}_{\ell}} \sigma^2_s ds\bigg)<\delta,$ for all $n\geq n_0(\omega)$, which by arbitrariness of $\delta$, gives \eqref{e:sup_integral_cadlag_}.

So,  we need only to show
\begin{equation}\label{e:needonlytoshow}
\max_{\frac{k_n}{n}< \ell \leq n-\frac{k_n}{2}}\Bigg|\frac{n}{k_n}\sum_{i=\ell-\frac{k_n}{2}}^{\ell + \frac{k_n}{2}} \big(\Delta_i^n(\sigma\!\cdot\! W)\big)^2 -\frac{n}{k_n}\int_{\underline{t}_{\ell}}^{\bar{t}_{\ell}} \sigma^2_s ds \Bigg|= o(1).
\end{equation}
With $Y_t= (\sigma\!\cdot\! W)_t$, It\^o's formula gives
\begin{equation}\label{e:ito_increment}
(\Delta_i^n (\sigma\!\cdot\! W)\big)^2  = 2 \int_{t_{i-1}}^{t_i}  (Y_s-Y_{t_{i-1}}) dY_s +  \int_{t_{i-1}}^{t_i} \sigma^2_s ds.
\end{equation}
Using \eqref{e:ito_increment}, we have
\begin{align*}
\max_{\frac{k_n}{n}< \ell \leq n-\frac{k_n}{2}}\bigg|\frac{n}{k_n}&\sum_{i=\ell-\frac{k_n}{2}}^{\ell + \frac{k_n}{2}} \big(\Delta_i^n(\sigma\!\cdot\! W)\big)^2  -\frac{n}{k_n}\int_{\underline{t}_{\ell}}^{\bar{t}_{\ell}}  \sigma^2_s ds \bigg|\\
 & \leq 2\max_{\frac{k_n}{n}< \ell \leq n-\frac{k_n}{2}}\bigg|\frac{n}{k_n}\sum_{i=\ell-\frac{k_n}{2}}^{\ell + \frac{k_n}{2}}\int_{t_{i-1}}^{t_i}  (Y_s-Y_{t_{i-1}}) dY_s\bigg|.\label{e:uniform_qv_bound} %
\end{align*}
With $g_\ell(s)=\sum_{i=\ell-\frac{k_n}{2}}^{\ell + \frac{k_n}{2}}(Y_s-Y_{t_{i-1}}){\bf 1}_{(t_{i-1},t_i]}(s)$, since $[Y,Y]_t=\int_0^t \sigma_s^2ds$, we obtain, for any $p\geq 2$
\begin{align*}
\bE  \bigg|\frac{n}{k_n}\sum_{i=\ell-\frac{k_n}{2}}^{\ell + \frac{k_n}{2}}\int_{t_{i-1}}^{t_i}  (Y_s-Y_{t_{i-1}}) dY_s \bigg|^p& = \Big(\frac{n}{k_n}\Big)^p\bE  \bigg|\int_0^1  g_\ell(s)dY_s \bigg|^p\\
&\leq K \Big(\frac{n}{k_n}\Big)^p\bE\bigg( \int_0^1 g_\ell^2(s)\sigma_s^2ds\bigg)^{p/2}\\
&\leq K \Big(\frac{n}{k_n}\Big)^p\Bigg(\sum_{i=\ell-\frac{k_n}{2}}^{\ell + \frac{k_n}{2}} \int_{t_{i-1}}^{t_i } \bE |Y_s-Y_{t_{i-1}}|^2 ds\Bigg)^{p/2}\\
 &\leq K \Big(\frac{n}{k_n}\Big)^p k_n^{p/2 -1} h^{p/2-1}\sum_{i=\ell-\frac{k_n}{2}}^{\ell + \frac{k_n}{2}} \int_{t_{i-1}}^{t_i } \bE |Y_s-Y_{t_{i-1}}|^p ds\\
 & \leq K k_n^{-p/2},
\end{align*}
which holds uniformly in $\ell$.  Thus,
\begin{align*}
\bP\bigg( 2\max_{\frac{k_n}{n}< \ell \leq n-\frac{k_n}{2}}&\frac{n}{k_n}\bigg|\sum_{i=\ell-\frac{k_n}{2}}^{\ell + \frac{k_n}{2}} \int_{t_{i-1}}^{t_i}  (Y_s-Y_{t_{i-1}}) dY_s \bigg|>\delta\bigg) \\
& \leq \sum_{\ell=\frac{k_n}{2}+1}^{n-\frac{k_n}{2}}\bP\bigg( 2\frac{n}{k_n}\bigg|\sum_{i=\ell-\frac{k_n}{2}}^{\ell+\frac{k_n}{2}} \int_{t_{i-1}}^{t_i}  (Y_s-Y_{t_{i-1}}) dY_s \bigg|>\delta\bigg) \\
& \leq (\delta/2)^{-p} \sum_{\ell=\frac{k_n}{2}+1}^{n-\frac{k_n}{2}} \bE  \bigg|\frac{n}{k_n}\sum_{i=\ell-\frac{k_n}{2}}^{\ell+\frac{k_n}{2}}\int_{t_{i-1}}^{t_i}  (Y_s-Y_{t_{i-1}}) dY_s \bigg|^{p} \\
& \leq K nk_n^{-p/2}.
\end{align*}
By taking $p>2/a$ we get \eqref{e:maxVol_asconverge}.
\end{proof}

\begin{lemma}\label{l:mod_of_cont} %
$$
\limsup_{h\to 0} \sup_{0\leq t\leq 1-h }\frac{| \int_t^{t+h} \sigma_u dW_u |}{\sqrt{2 \big(\sup_{u\in[t,t+h)}\sigma^2_u\big)h \log (1/h)}}\leq 1, \quad \textnormal{a.s.}
$$
In particular, with $\overline \sigma_{i,n}^2 = \sup_{t\in[t_{i-1},t_i)}\sigma^2_t$,
$$
\limsup_{n\to \infty} \max_{i=1\ldots,n}\frac{| \Delta_i^n (\sigma \!\cdot\! W)|}{\sqrt{2\overline \sigma_{i,n}^2 h_n \log (1/h_n)}}\leq 1, \quad \textnormal{a.s.}
$$
\end{lemma}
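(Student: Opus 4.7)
The plan is to reduce the claim to L\'evy's uniform modulus of continuity for Brownian motion via a Dambis--Dubins--Schwarz (DDS) time change. First I will write $Y_t := \int_0^t\sigma_u dW_u$ as $B_{\tau(t)}$ for a standard Brownian motion $B$ (on a possibly enlarged probability space) and the clock $\tau(t) := \int_0^t \sigma_u^2 du$. This is legitimate because, under the standing localization stated at the beginning of Appendix~\ref{Sec:proof}, $\sigma$ and $\sigma^{-1}$ are both bounded above by a deterministic constant, so $\tau$ is strictly increasing and absolutely continuous. Consequently each increment $\int_t^{t+h}\sigma_u dW_u$ becomes a Brownian increment $B_{\tau(t+h)}-B_{\tau(t)}$ of duration $\Delta_\tau(t,h) := \tau(t+h)-\tau(t)$.

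Next I will invoke L\'evy's uniform modulus of continuity: for every $\varepsilon>0$, almost surely for all sufficiently small $\delta>0$,
$$\sup_{0 \le s < r \le \tau(1),\; r-s \le \delta} \frac{|B_r - B_s|}{\sqrt{2(r-s)\log(1/(r-s))}} \le 1 + \varepsilon.$$
By the localization, there exist $0 < c \le K$ with $c \le \sigma_u \le K$ for all $u\in[0,1]$, and hence, writing $\overline\sigma^2(t,h) := \sup_{u \in [t,t+h)}\sigma_u^2$, the uniform-in-$t$ bounds $c^2 h \le \Delta_\tau(t,h) \le \overline\sigma^2(t,h)\, h \le K^2 h$ hold. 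In particular $\Delta_\tau(t,h)\to 0$ uniformly in $t$ as $h\to 0$, and $\log(1/\Delta_\tau(t,h))/\log(1/h)\to 1$ uniformly in $t$.

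Combining these ingredients, I will factor
$$\frac{|B_{\tau(t+h)}-B_{\tau(t)}|}{\sqrt{2\overline\sigma^2(t,h)\, h \log(1/h)}} = \frac{|B_{\tau(t+h)}-B_{\tau(t)}|}{\sqrt{2\Delta_\tau(t,h)\log(1/\Delta_\tau(t,h))}}\cdot \sqrt{\frac{\Delta_\tau(t,h)}{\overline\sigma^2(t,h)\, h}}\cdot \sqrt{\frac{\log(1/\Delta_\tau(t,h))}{\log(1/h)}}.$$
By L\'evy's modulus the first factor is at most $1+\varepsilon$ uniformly in $t$ for all small enough $h$; the middle factor is bounded by $1$ by construction; and the third factor tends to $1$ uniformly in $t$. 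Taking $\limsup_{h\to 0}$ followed by $\varepsilon\downarrow 0$ proves the first assertion. The second (discrete) assertion is then immediate by specializing $(t,h)\mapsto (t_{i-1}, h_n)$ and noting that $\max_{i}$ is bounded by the supremum over $t\in[0,1-h_n]$.

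The principal obstacle is ensuring uniformity in $t$, and this is handled cleanly on two fronts: L\'evy's modulus is already uniform in the starting point, and the logarithmic asymptotics $\log(1/\Delta_\tau(t,h))\sim\log(1/h)$ rest only on the two-sided bounds on $\sigma$ guaranteed by the localization. No path-regularity of $\sigma$ beyond c\`adl\`ag boundedness is required.
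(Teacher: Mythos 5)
Your proof is correct, and it reaches the conclusion by a genuinely shorter route than the paper's. Both arguments open with the Dambis--Dubins--Schwarz time change $(\sigma\cdot W)_t=B_{C_t}$, $C_t=\int_0^t\sigma^2_u du$, but they diverge immediately afterwards. You invoke the uniform L\'evy modulus in its \emph{increment-normalized} form, $\sup_{r-s\le\delta}|B_r-B_s|/\sqrt{2(r-s)\log(1/(r-s))}\le1+\varepsilon$ for small $\delta$ --- which does follow from the classical statement $\limsup_{h\downarrow0}\sup_{t}|B_{t+h}-B_t|/\sqrt{2h\log(1/h)}\le1$ a.s.\ by taking a further supremum over increment lengths $h\le\delta$ --- and the lemma then collapses to two elementary observations: $C_{t+h}-C_t=\int_t^{t+h}\sigma^2_u\,du\le h\sup_{u\in[t,t+h)}\sigma^2_u$, and $\log\big(1/(C_{t+h}-C_t)\big)\sim\log(1/h)$ uniformly in $t$ by the two-sided localized bounds on $\sigma$. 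The paper instead normalizes Brownian increments by a fixed mesh, so it must discretize both the time axis (into $m$ blocks) and the volatility level (into powers of $\theta>1$), apply the Cs\"org\H{o}--R\'ev\'esz exponential bound, run Borel--Cantelli along the subsequence $h_\ell=\ell^{-r_0}$ and interpolate, and finally deploy a separate c\`adl\`ag partition argument to pass from the block supremum of $\sigma^2$ to the local supremum over $[t,t+h)$. Your middle-factor bound is exactly what renders all of that machinery unnecessary: it injects the local supremum into the denominator directly, so no regularity of $\sigma$ beyond boundedness above and away from zero is used. What the paper's longer argument buys is self-containedness at the level of a quantitative exponential inequality; what yours buys is brevity, at the cost of citing the uniform modulus as a black box. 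In a full write-up you should make explicit (i) the monotonicity-in-$\delta$ argument showing the increment-normalized modulus retains the constant $1$, and (ii) that the modulus is applied on the deterministic interval $[0,K^2]\supseteq[0,C_1]$, since $C_1$ is random.
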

\begin{proof}
Define %
$$
D(h) =  \sup_{0\leq t\leq 1-h }\frac{| \int_t^{t+h} \sigma_u dW_u |}{\sqrt{ \sup_{u\in[t,t+h)}\sigma^2_u}}.
$$
It suffices to show that for every small $\eta>0$,
$$
\limsup_{h\to 0} D(h) \leq 1+3\eta.
$$
So, for any $m<1/h$, define
\begin{align*}
D(m,h) &= \max_{j=1,\ldots,m} \sup_{t\in\big[\frac{(j-1)}{m},\frac{j}{m} \wedge (1-h)\big)}\frac{| \int_t^{t+h} \sigma_u dW_u |}{\sqrt{\sup_{u\in I_{j,m,h}}\sigma^2_u}},
\end{align*}
where $I_{j,m,h}=\big[\frac{(j-1)}{m},(\frac{j}{m} + h)\wedge 1\big).$
We first establish for appropriate subsequence $h_\ell\to 0$, and $m=m(h_\ell) \to\infty$, %
\begin{equation*}%
\limsup_{\ell\to\infty} \frac{D\big(h_\ell,m(h_\ell)\big)}{\sqrt{2 h_\ell \log (1/h_\ell)}}\leq 1+\eta, \quad \textnormal{a.s.}
\end{equation*}
So, let $0<\eta<1$ be given, and let $\theta = \frac{(1+\eta)^2}{(1+\eta/2)^2}>1$. With $C_t= \int_0^t \sigma^2_sds$, the Dambis-Dubins-Schwarz Theorem (e.g., Theorem 4.6 in \cite{karatzas:shreve:1998}) implies that a.s.,
$$
(\sigma \!\cdot\! W)_t = B_{C_t},\quad t \geq 0,$$
where $B_{t}$ a Brownian motion. Also, since $\sigma$ is bounded above and below, we may take a $J,J_0>0$ such that $\theta^J\geq \overline \sigma^2$ and $\theta^{-J_0}<\inf \sigma_t^2$ a.s.  %
For any $-J_0\leq \ell \leq J$, on the event %
$\big\{\sup_{u\in I_{j,m,h}}\sigma^2_u\in\big(\theta^{\ell -1},\theta^{\ell}\big]\big\}$ observe $C_t\leq u,v\leq C_{t+h}$ implies $|u-v|\leq h\theta^{\ell}$ for each $t\in [\frac{(j-1)}{m},\frac{j}{m})$. This gives  %
\begin{align}
 &\bP\Bigg(\sup_{t\in[\frac{(j-1)}{m},\frac{j}{m})}\frac{| B_{C_{t+h}}-B_{C_t} |}{\sqrt{2\big(\sup_{u\in I_{j,m,h}}\sigma^2_u\big)h\log (1/h)}} >1+\eta,~ \sup_{u\in I_{j,m,h}}\sigma^2_u\in\big(\theta^{\ell -1},\theta^{\ell}\big]\Bigg)\notag\\ %
 & \leq  \bP\Bigg(\sup_{t\in[\frac{(j-1)}{m},\frac{j}{m})}\frac{\sup_{C_{t }\leq u,v \leq C_{t+h}}|B_u-B_v|}{\sqrt{2\big(\sup_{u\in I_{j,m,h}}\sigma^2_u\big)h\log (1/h)}}  >1+\eta, ~\sup_{u\in I_{j,m,h}}\sigma^2_u\in\big(\theta^{\ell -1},\theta^{\ell}\big]\Bigg) \notag\\
& \leq \bP\Bigg(\sup_{|u-v|\leq h\theta^{\ell}}\sup_{ 0\leq u,v\leq \theta^J}\frac{|B_u-B_v|}{\sqrt{2\theta^{\ell-1} h \log (1/h)}} >1+\eta\Bigg) \notag\\
& =\bP\Bigg(\sup_{|u-v|\leq h}\sup_{~0\leq s,t\leq \theta^J/\theta^{\ell}}\frac{\sqrt{\theta^{\ell}}|B_s-B_t|}{\sqrt{2 \theta^{\ell-1} h \log (1/h)}} >1+\eta\Bigg)\notag\\
& \leq \bP\Bigg(\sup_{|s-t|\leq h}\sup_{~0\leq s,t\leq \theta^{J-\ell}}\frac{|B_s-B_t|}{\sqrt{2 h \log (1/h)}} > (1+\eta)/\sqrt{\theta}\Bigg)\notag\\
& \leq \bP\Bigg(\sup_{|s-t|\leq h}\sup_{~0\leq s,t\leq \theta^{J+J_0}}\frac{|B_s-B_t|}{\sqrt{2 h \log (1/h)}} > (1+\eta)/\sqrt{\theta}\Bigg)\notag\\
& = \bP\Bigg(\sup_{|s-t|\leq h}\sup_{~0\leq s,t\leq \theta^{J+J_0}}\frac{ |B_s-B_t|}{\sqrt{2 h \log (1/h)}} >1+\eta/2\Bigg),\label{e:modcontbound1}
\end{align}
where on the 4th line we used self-similarity of $B_t$.  Summing over $\ell$ and applying a union bound we obtain
\begin{align}
\bP\Bigg(\frac{D(h,m)}{\sqrt{2 h \log (1/h)}} >1+\eta\Bigg)%
& \leq K m \,\bP\Bigg(\sup_{|s-t|\leq h}\sup_{~0\leq s,t\leq \theta^{J+J_0}}\frac{ |B_t-B_s|}{\sqrt{2 h \log (1/h)}} >1+\eta/2\bigg).
 \label{e:modcontbound1}
\end{align}
Now, recall (c.f. Lemma 1.1.1 in \cite{csorgo:revesz:1981}), that for every $M,\delta>0$, there exists a $K_0=K_0(\delta,M)$ such that for every positive $v>0$,
$$
\bP\bigg(\sup_{\,0\leq s,t\leq M,\,|s-t|\leq h}|B_t-B_s| \geq v \sqrt h\bigg) \leq \frac{K_0}{h}\exp\bigg\{\frac{-v^2}{2+\delta}\bigg\}.
$$
So, taking $\delta>0$ small enough so that $\frac{2(1+\eta/2)^2}{2+\delta}=1+\kappa $ for some $0<\kappa<1$, and taking $M= \theta^{J+J_0}$, and $v= (1+\eta/2)\sqrt{2 \log(1/h)},$ we get
\begin{align}
\bP\Bigg(\sup_{|s-t|\leq h,~ 0\leq s,t\leq \theta^{J+J_0}}\frac{|B_t-B_s|}{\sqrt{2 h \log (1/h)}} >1+\eta/2\Bigg) &\leq \frac{K_0}{h}\exp\bigg\{\frac{-2(1+\eta/2)^2\log(1/h)}{2+\delta}\bigg\}\notag\\
& \leq K_0 h^{\kappa}\label{e:modcontbound3}
\end{align}
Thus, with $m=m(h)=\lfloor h^{-\kappa/2}\rfloor$, combining \eqref{e:modcontbound1} and \eqref{e:modcontbound3} we get%
\begin{equation}\label{e:D(h,m)_bound1}
\bP\Bigg(\frac{D\big(h,m(h)\big)}{\sqrt{2 h \log (1/h)}} >1+\eta\Bigg) \leq K m h^{\kappa}\leq K  h^{\kappa/2}.
\end{equation}
Now, for some integer $r_0>2/\kappa$ consider the subsequence $h_\ell$ defined by
$$
h_\ell=  \ell^{-r_0}.
$$
Using \eqref{e:D(h,m)_bound1} we obtain
\begin{align*}
\sum_{\ell=2}^\infty\bP\Bigg(\frac{D\big(h_\ell,m(h_\ell)\big)}{\sqrt{2 h_\ell \log (1/h_\ell)}} >1+\eta\Bigg) \leq  \sum_{\ell=2}^\infty \ell^{-(r_0\kappa/2)}<\infty. %
\end{align*}
Thus, the Borel-Cantelli lemma gives
\begin{equation*}%
\limsup_{\ell\to\infty} \frac{D\big(h_\ell,m(h_\ell)\big)}{\sqrt{2 h_\ell \log (1/h_\ell)}}\leq 1+\eta, \quad \textnormal{a.s.}
\end{equation*}
This implies for a.s. $\omega$ we can find a large $\ell_0(\omega)$ such that
\begin{equation}\label{e:ell_0(omega)}
\frac{D\big(h_\ell,m(h_\ell)\big)}{\sqrt{2 h_\ell \log (1/h_\ell)}}< 1+2\eta,\quad \ell\geq \ell_0(\omega).
\end{equation}
We now proceed to establish, for some $\ell_1(\omega)<\infty$, a.s.,
\begin{equation}\label{e:ell_1(omega)}
\frac{D\big(h_\ell\big)}{\sqrt{2 h_\ell \log (1/h_\ell)}}\leq\bigg(\frac{1 + 3\eta}{1+2\eta}\bigg)\frac{D\big(h_\ell,m(h_\ell)\big)}{\sqrt{2 h_\ell \log (1/h_\ell)}},\quad \ell\geq \ell_1(\omega).
\end{equation}
Recall (c.f. \eqref{e:cadlag_uniform_bound}) we may take $\nu=\nu(\omega)$ large enough so that so that for all $u_1\in[s_0,s_1)$,\ldots, $u_\nu\in[s_{\nu-1},s_\nu)$,
$$
\max_{i=1,\ldots,\nu} \sup_{t \in[s_{i-1},s_i)} \sigma^2_t -  \sigma^2_{u_i}  \leq \delta' \theta^{-J_0},
$$
where $\delta'>0$ is chosen small enough so that  $\sqrt{1-\delta'}=\frac{1+2\eta}{1+3\eta}$. This gives, for any $u \in [s_{i-1},s_i)$, %
\begin{align*}
\sigma^2_{u}&= \sup_{t \in [s_{i-1},s_i)}\sigma^2_t -\Big(\sup_{t \in [s_{i-1},s_i)} \sigma^2_t - \sigma^2_{u}\Big)\\
& \geq  \sup_{t \in [s_{i-1},s_i)} \sigma^2_t - \delta' \theta^{-J_0}\\
& \geq (1-\delta')\sup_{t \in [s_{i-1},s_i)} \sigma^2_t,%
\end{align*}
since $\sup_{t \in I_i} \sigma^2_t\geq \inf_t \sigma^2_t \geq \theta^{-J_0}$.  So, taking $m_0(\omega)$ large enough so that
$$
\frac{2}{m_0(\omega)} \leq \min_{i=1\ldots \nu} s_{i}-s_{i-1},
$$
for every $m\geq m_0(\omega)$ there is at most one $s_i$ in each interval $ I_{j,m,h}\subseteq[\frac{j-1}{m},\frac{j+1}{m})$. In the case that for some $i$, $I_{j,m,h}\subseteq [s_{i-1},s_i)$, we have
$$
\sup_{t\in\big [\frac{j-1}{m},\frac{j}{m} \wedge(1-h)\big) } \frac{| \int_t^{t+h} \sigma_u dW_u |}{\sqrt{ \sup_{u\in[t,t+h)}\sigma^2_u}} \leq  \sup_{t\in\big [\frac{j-1}{m},\frac{j}{m} \wedge(1-h)\big) }\frac{| \int_t^{t+h} \sigma_u dW_u |}{\sqrt{(1-\delta')\sup_{u\in I_{j,m,h}}\sigma^2_u}}.
$$
In the other case, i.e. for some $i$, $s_i \in I_{j,m,h}$ (i.e., $s_{i-1}< \frac{j-1}{m}< s_i < (\frac{j}{m}+ h) \wedge 1 <s_{i+1}$), we have:
\begin{align*}
&\sup_{t\in \big[\frac{j-1}{m},\frac{j}{m} \wedge (1-h)\big) } \frac{| \int_t^{t+h} \sigma_u dW_u |}{\sqrt{ \sup_{u\in[t,t+h)}\sigma^2_u}} \\
&=\sup_{t\in \big[\frac{j-1}{m},\frac{j}{m} \wedge (1-h)\big)  } \frac{| \int_t^{t+h} \sigma_u dW_u |}{\sqrt{ \max\{\sup_{u\in[t,(t+h) \wedge s_i)}\sigma^2_u,\sup_{u\in[(t+h) \wedge s_i, t+h)}\sigma^2_u\big\}}} \\
&\leq \sup_{t\in\big[\frac{j-1}{m},\frac{j}{m} \wedge (1-h)\big) } \frac{| \int_t^{t+h} \sigma_u dW_u |}{\sqrt{ \max\{(1-\delta')\sup_{u\in[\frac{j-1}{m},   s_i)}\sigma^2_u,(1-\delta')\sup_{u\in[  s_i,(\frac{j}{m} + h )\wedge 1)}\sigma^2_u,\big\}}} \\
& =   \sup_{t\in\big[\frac{j-1}{m},\frac{j}{m} \wedge (1-h)\big)}\frac{| \int_t^{t+h} \sigma_u dW_u |}{\sqrt{(1-\delta')\sup_{u\in I_{j,m,h}}\sigma^2_u}}
\end{align*}
Hence, for all $m\geq m_0(\omega)$, and all $h<\frac{1}{m}$,
\begin{align*}
D(h) &=  \sup_{0\leq t\leq 1-h }\frac{| \int_t^{t+h} \sigma_u dW_u |}{\sqrt{ \sup_{u\in[t,t+h)}\sigma^2_u}}\\
&=\max_{{j=1,\ldots,m}}\sup_{t\in \big[\frac{j-1}{m},\frac{j}{m} \wedge (1-h)\big) } \frac{| \int_t^{t+h} \sigma_u dW_u |}{\sqrt{ \sup_{u\in[t,t+h)}\sigma^2_u}} \\
&\leq  \frac{1}{\sqrt{1-\delta'}}\max_{j=1,\ldots,m}\sup_{t\in \big[\frac{j-1}{m},\frac{j}{m} \wedge (1-h)\big) }\frac{| \int_t^{t+h} \sigma_u dW_u |}{\sqrt{\sup_{u\in[\frac{(j-1)}{m},\frac{j}{m})}\sigma^2_u}}\\
& = \frac{1 + 3\eta}{1+2\eta} D(h,m).
\end{align*}
So, if we choose $\ell_1(\omega)\geq \ell_0(\omega)$ large enough so that $m(h_\ell)> m_0(\omega)$ for all $\ell \geq \ell_1(\omega)$, expression \eqref{e:ell_1(omega)} holds.  From \eqref{e:ell_0(omega)} we then have %
$$
\frac{D\big(h_\ell\big)}{\sqrt{2 h_\ell \log (1/h_\ell)}}\leq\bigg(\frac{1 + 3\eta}{1+2\eta}\bigg)  \frac{D\big(h_\ell,m(h_\ell)\big)}{\sqrt{2 h_\ell \log (1/h_\ell)}}< 1+3\eta,\quad \ell\geq \ell_1(\omega),
$$
i.e.,
$$
\limsup_{\ell\to\infty }\frac{D\big(h_\ell\big)}{\sqrt{2 h_\ell \log (1/h_\ell)}} \leq 1+ 3\eta,\quad \text{a.s.}
$$
Now, for any $h$ with $h_{\ell+1}\leq h <h_\ell$, we have
$$
\frac{D(h)}{2h\log (1/h)} \leq \frac{D\big(h_\ell\big)}{\sqrt{2 h_{\ell+1} \log (1/h_{\ell+1})}}= \frac{D\big(h_\ell\big)}{\sqrt{2 h_{\ell} \log (1/h_{\ell})}} \frac{\sqrt{2 h_{\ell} \log (1/h_{\ell})}}{\sqrt{2 h_{\ell+1} \log (1/h_{\ell+1})}}%
$$
which gives
$$
\limsup_{h\to 0} \frac{D(h)}{2h\log (1/h)} \leq \limsup_{\ell\to\infty }\frac{D\big(h_\ell\big)}{\sqrt{2 h_\ell \log (1/h_\ell)}} \limsup_{\ell\to\infty}\frac{\sqrt{2 h_{\ell} \log (1/h_{\ell})}}{\sqrt{2 h_{\ell+1} \log (1/h_{\ell+1})}}  \leq 1 + 3\eta,\quad \text{a.s.,}
$$
which completes the proof.
\end{proof}

\begin{acks}[Acknowledgments]
 The authors would like to thank an anonymous referee whose comments and suggestions helped to substantially improve the paper.
\end{acks}

\bibliographystyle{imsart-nameyear} %
\bibliography{fixpoint_Bib.bib}       %

\end{document}